\newcommand{\eqnum}{\refstepcounter{equation}\textup{\tagform@{\theequation}}}
\newtheorem{thm}{Theorem}[section]
\newtheorem{lem}[thm]{Lemma}  
\newtheorem{prop}[thm]{Proposition}
\newtheorem{cor}[thm]{Corollary}
\newtheorem{df}{Definition}
\newtheorem{rem}{Remark}
\newcommand{\Z}{\mathbb{Z}}
\newcommand{\C}{{\mathbb{C}}}
\newcommand{\R}{{\mathbb{R}}}
\newcommand{\Q}{{\mathbb{Q}}}
\newcommand{\N}{{\mathbb{N}}}
\newcommand{\F}{\mathbb{F}}
\newcommand{\cB}{\mathcal{B}}
\newcommand{\rk}{{\operatorname{rk}}}
\newcommand{\into}[0]{\hookrightarrow}
\newcommand{\Filt}{\mathcal{F}}
\newcommand{\Ga}[1]{\operatorname{Art}(A_{#1})}
\newcommand{\Gb}[1]{\operatorname{Art}(B_{#1})}
\newcommand{\Gbn}{\operatorname{Art}(B_n)}
\newcommand{\Gdn}{\operatorname{Art}(D_n)}
\newcommand{\Br}{\operatorname{Br}}
\newcommand{\perm}{\mathfrak{S}}
\newcommand{\W}{\operatorname{G}}
\newcommand{\B}{\operatorname{B}}
\newcommand{\disk}{\mathrm{D}}
\newcommand{\ddiskP}{\widetilde{\disk \setminus \P}}
\newcommand{\conf}{\operatorname{C}}
\newcommand{\confn}{\conf_n}
\newcommand{\confm}[1]{\operatorname{C_{1,#1}}}
\newcommand{\confmd}[1]{\widetilde{\operatorname{C_{1,#1}}}}
\newcommand{\sym}[1]{\operatorname{V_{#1}}}
\newcommand{\totsp}{\mathrm{E}}
\renewcommand{\P}{\mathrm{P}}
\newcommand{\surf}{\Sigma}
\renewcommand{\ring}{R}
\renewcommand{\st}{\Gamma}
\newcommand{\pmu}{{\pm 1}}
\newcommand{\ppartial}{\overline{\partial}}
\newcommand{\DB}{\ppartial}
\newcommand{\scsc}[2]{ \substack{
		{}_{#1}\\
		{}_{#2}
	} }
\newcommand{\gp}{\widetilde{\gamma}}
\newcommand{\gpu}{\widetilde{\gamma}_1}
\newcommand{\bool}[1]{\mathcal{P}[#1]}
\DeclareMathOperator{\gs}{\gamma}
\DeclareMathOperator{\gsu}{\gamma_1}
\DeclareMathOperator{\coker}{\mathrm{coker}}
\newcommand{\MMM}{\mathrm{M}}
\newcommand{\MMMp}{\widetilde{\MMM}}
\newcommand{\MM}[1]{\operatorname{\MMM}(#1)}
\newcommand{\MMp}[1]{\MMMp(#1)}
\newcommand{\Id}{\mathrm{Id}}
\newcommand{\stab}{\mathrm{st}}
\newcommand{\gstab}{\mathrm{gst}}
\newlength\Origarrayrulewidth
\newcommand{\chl}{\cellcolor{lightgray!30}}
\def\svgwidth{\columnwidth}
\title[Families of superelliptic curves]{
Families of superelliptic curves, complex braid groups and generalized Dehn twists}
\author{Filippo Callegaro and Mario Salvetti}
\address[F. Callegaro]{Dipartimento di Matematica, University of Pisa, Italy.}
\email{callegaro@dm.unipi.it}
\address[M. Salvetti]{Dipartimento di Matematica, University of Pisa, Italy.} 
\email{salvetti@dm.unipi.it}
\begin{document}

\begin{abstract}
We consider the universal family $E_n^d$ of superelliptic curves: each curve $\Sigma_n^d$ in the family is a $d$-fold covering of the unit disk, totally ramified over a set $\P$ of $n$ distinct points; $\Sigma_n^d\hookrightarrow E_n^d\to \conf_n$ is a fibre bundle, where $\conf_n$  is the configuration space of $n$ distinct points. \\
We find that $E_n^d$ is the classifying space for the complex braid group of type $\B(d,d,n)$ and we compute a big part of the integral homology of $E_n^d,$ including a complete calculation of the  stable groups over finite fields by means of Poincar\`e series. 
The computation of the main part of the above homology reduces to the computation of the homology of the classical braid group with coefficients in the first homology group of $\Sigma_n^d,$ endowed with the monodromy action.   
While giving a geometric description of such monodromy of the above bundle, we introduce  generalized $\frac{1}{d}$-twists, associated to each standard generator of the braid group, which reduce to standard Dehn twists for $d=2.$ 

%
\end{abstract}

\maketitle

\section{Introduction}
Let
$$
\totsp_{n}^d := \{(\P, z,y ) \in \conf_n  \times \disk \times \C| y^d = (z-x_1)\cdots(z-x_n) \}
$$ 
be 
the family of superelliptic curves 
where $\disk$ is the  open unit disk in $\C,$   $\conf_n$ is the configuration space of $n$ distinct unordered points in $\disk$ and $\P=\{x_1,\dots,x_n\}\in \conf_n.$  
For a fixed $\P \in \conf_n$, the curve $$\surf^d_n =  \{(z,y ) \in \disk \times \C| y^d = (z-x_1)\cdots(z-x_n) \}$$ in the family $\totsp_n^d$ is a $d$-fold covering of the disk $\disk,$ totally ramified over $\P,$  and there is a fibration  $\pi:\totsp_n^d\to \conf_n$ which takes  $\surf_n^d$ onto its set of ramification points. One can see $\totsp_n^d$ as a universal family over the Hurwitz space $H^{n,d}$ (for precise definitions see \cite{fulton}, \cite{evw}).

In addition to the obvious interest for such families, we find a remarkable fact which seems not having been noticed before (even if the proof is not difficult):  
\begin{thm}[see Theorem \ref{thm:Bddn}]
The space $\totsp^d_n$ is a classifying space for the complex braid group of type $\B(d,d,n)$.
\end{thm}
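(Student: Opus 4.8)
The goal is $\totsp_n^d\simeq K(\B(d,d,n),1)$, which I would split into (i) $\totsp_n^d$ is aspherical, and (ii) $\pi_1(\totsp_n^d)\cong \B(d,d,n)$.

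For (i) I would use the fibre bundle $\surf_n^d\hookrightarrow\totsp_n^d\to\conf_n$. The base is the classical $K(\Br_n,1)$, so it suffices that the fibre $\surf_n^d$ be aspherical. Now $\surf_n^d\to\disk$ is a $d$-fold cover branched at the $n$ points of $\P$, and the associated unramified cover over $\disk\setminus\P$ has monodromy $\pi_1(\disk\setminus\P)\to\Z/d$ sending every meridian to a fixed generator; hence it is onto and $\surf_n^d$ is connected. Since $\disk$ is an open disk, $\surf_n^d$ is a connected non-compact surface, so it is homotopy equivalent to a wedge of circles; Riemann--Hurwitz gives $\chi(\surf_n^d)=d-n(d-1)$, so $\pi_1(\surf_n^d)$ is free of rank $(n-1)(d-1)$. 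In particular $\surf_n^d$ is aspherical, and the homotopy exact sequence of the bundle gives $\pi_k(\totsp_n^d)=0$ for $k\ge 2$.

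For (ii), recall that by definition $\B(d,d,n)=\pi_1\big(M(d,d,n)/\W(d,d,n)\big)$, where $\W(d,d,n)$ is the complex reflection group acting on $\C^n$ and $M(d,d,n)$ is the complement of its reflection arrangement, namely $\{w\in\C^n : w_i^d\ne w_j^d \text{ for all } i\ne j\}$ (the hyperplanes $w_i=\zeta^k w_j$ cut out $w_i^d=w_j^d$). The basic invariants $e_1(w^d),\dots,e_{n-1}(w^d),\,w_1\cdots w_n$ identify $M(d,d,n)/\W(d,d,n)$ with the space of pairs $(\{c_1,\dots,c_n\},p)$ with $\{c_i\}\in\conf_n$ and $p^d=c_1\cdots c_n$ (given $w$, set $c_i=w_i^d$, $p=\prod_i w_i$). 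I would then map $(\P,z,y)\in\totsp_n^d$ to $\big(\{z-x:x\in\P\},\,y\big)$, which is well defined since the $z-x$ are distinct and $y^d=\prod_{x\in\P}(z-x)$. This realises $\totsp_n^d$ as the total space of a fibre bundle over $M(d,d,n)/\W(d,d,n)$ whose fibre over $(\{c_i\},p)$ is $\{z\in\C : z-c_i\in\disk \text{ for every }i\}$, an intersection of open disks, hence convex and contractible. After replacing the open disk $\disk$ by $\C$ — which changes neither space up to homotopy — the fibre becomes all of $\C$ and the bundle is a \emph{trivial} principal $\C$-bundle: the translation $t\cdot(\P,z,y)=(\P+t,\,z+t,\,y)$ acts freely and $z$ is an equivariant map to $\C$. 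Hence the projection $\totsp_n^d\to M(d,d,n)/\W(d,d,n)$ is a homotopy equivalence, so $\pi_1(\totsp_n^d)\cong\B(d,d,n)$; together with (i) this gives $\totsp_n^d=K(\B(d,d,n),1)$.

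The step I expect to be delicate is (ii): verifying that the projection to $M(d,d,n)/\W(d,d,n)$ is genuinely a locally trivial fibre bundle, and handling the passage between the open-disk and plane versions of $\totsp_n^d$ carefully (e.g. via the scaling $(\P,z,y)\mapsto(\lambda\P,\lambda z,\lambda^{n/d}y)$ with $\lambda\in\R_{>0}$, which is well defined and contracts the plane picture back into the disk). Asphericity in (i) is by contrast essentially formal once the fibre surface is understood, and the identification of $\B(d,d,n)$ with $\pi_1$ of the orbit space is just its definition. One can also bypass (i) by invoking the known $K(\pi,1)$ property of the reflection arrangement of $\W(d,d,n)$, but deducing asphericity from the fibration keeps the argument self-contained and recovers that property as a by-product.
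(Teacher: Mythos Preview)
Your proof is correct and follows essentially the same route as the paper: both pass to the $\C$-version of $\totsp_n^d$, use the translation retraction $(\P,z,y)\mapsto(\P-z,y)$ (your trivial $\C$-bundle projection) to identify it with $\{(\{c_i\},p):p^d=\prod c_i\}$, and then match this with $\mathcal{M}(d,n)/\W(d,d,n)$ via $(w_i)\mapsto(\{w_i^d\},\prod w_i)$ (your basic-invariants description, the paper's explicit map $\Xi$). The asphericity argument from the fibration $\surf_n^d\hookrightarrow\totsp_n^d\to\conf_n$ is likewise the same as the paper's.
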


Here we are interested  in computing the integral homology of the space $\totsp_n^d.$ 
 The rational homology of $\totsp_n^d$ is known, having been computed in \cite{chen} by using \cite{cms_tams}.
The bundle $\pi:\totsp^d_n\to \conf_n$ has a global section, so $H_*(\totsp^d_n)$ splits into a direct sum $H_*(\conf_n)\oplus H_*(\totsp^d_n,\conf_n)$ and by the Serre spectral sequence $H_*(\totsp^d_n,\conf_n)=H_{*-1}(\Br_n;H_1(\surf^d_n)).$ 
We use here that $\conf_n$ is a classifying space for the braid group $\Br_n.$ 
We study the geometric action of the braid group over the surface. Each standard generator of the braid group lifts to a particular homeomorphism of the surface that we call a $\frac{1}{d}$-twist. Such twist is associated to an embedding of a regular polygon with an even number of edges, with opposite edges identified, and having one or two interior holes according to $d$ odd or even (see Figure \ref{fig:poligoni}). Suitable rotations of such polygons induce an homeomorphism of the surface that corresponds to the monodromy action (see \Cref{fig:twistdispari,fig:twistpari}).
Surprisingly, these particular homeomorphisms of a surface have not been considered before. Very recently \cite{KimSong} for $d=3$ and  \cite{GhaMcLeay} for general $d$ independently  found a similar description at the same time as ours.
The $\frac{1}{d}$-twist reduces to a standard Dehn twist around a simple curve for $d=2$   (see \cite{per_van_92}, \cite{waj_99}). 
For $d$ even, the $\frac{d}{2}$-th power of a  $\frac{1}{d}$-twist is a standard Dehn twist: so, we obtain explicit roots of Dehn twists which appear to  have an easier description than those introduced in \cite{marg_schl}.  
In Theorem \ref{thm:action} we describe the induced action on the first homology group of the surface.

In this paper we actually compute the integral homology of the braid group with coefficients in the above representation. Since the homology of the braid groups with trivial coefficients is well-know (see for example \cite{fuks}, \cite{vain}, \cite{cohen}) we obtain a description of the homology of $\totsp^d_n.$ 
It would be natural to extend the computation to the homology of the braid group $\Br_n$ with coefficients in the symmetric powers of  $H_1(\surf^d_n)$. In the case of $n=3$ and $d=2$ a complete computation (in cohomology) can be found in \cite{ccs}.

Our main results are the following. For reader convenience, we write again the case $d=2,$ which have already appeared in \cite{cal_sal_superelliptic}.

\begin{thm}[see Theorems \ref{th:no4tor}, \ref{thm:coverings}, \ref{thm:poincare}] \label{thm:homologyintro}

\begin{enumerate}[wide=0pt]
\item[]	
\item For odd $n$ or odd $d$ the homology $H_{i}(\Br_n; H_1(\surf^d_n;\Z))$ has no torsion of order $p^k$ if $p^k \nmid d$. 
\item For odd $n$  and for $p$ prime such that $p\mid d$ 
the rank of $H_i(\Br_n;H_1(\surf_n^d;\Z))\otimes \Z_p$ as a $\Z_p$-module is the coefficient of $q^it^n$ in the expansion of the series
 $$
\widetilde{P}_p(q,t)= \frac{qt^3}{(1-t^2q^2)(1-t^2)} \prod_{j \geq 0} \frac{1+q^{2p^j-1}t^{2p^j}}{1-q^{2p^{j+1}-2}t^{2p^{j+1}}}.
$$
\end{enumerate}
\end{thm}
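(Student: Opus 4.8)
\emph{Strategy of proof.}
The plan is to reduce the statement to the homology of $\Br_n$ with coefficients in the (reduced) Burau representation, and then, prime by prime, to an explicit homology computation on configuration spaces. First I would make the coefficient module precise. Writing $\surf^d_n$ as the cyclic branched cover $y^d=\prod(z-x_i)$ of $\disk$, letting $t$ denote the deck transformation $y\mapsto e^{2\pi i/d}y$, and excising the $n$ ramification points, one identifies $H_1(\surf^d_n;\Z)$ --- as a $\Z[\Br_n]$-module compatible with the central action of $\Z[t]/(t^d-1)$, in the explicit form given by the $\tfrac1d$-twist description of Theorem~\ref{thm:action} --- with a suitably normalized reduced Burau representation $\bar\beta_n$ base-changed along $\Z[t^{\pm1}]\to\Z[t]/(1+t+\dots+t^{d-1})$:
\[
H_1(\surf^d_n;\Z)\;\cong\;\bar\beta_n\otimes_{\Z[t^{\pm1}]}\Z[t]\big/\big(1+t+\dots+t^{d-1}\big),
\]
a free module of rank $n-1$ over $\Z[t]/(1+t+\dots+t^{d-1})$, sitting in a short exact sequence $0\to H_1(\surf^d_n;\Z)\xrightarrow{\,\cdot(t-1)\,}\bar\beta_n\otimes_{\Z[t^{\pm1}]}\Z[\Z/d]\to V\to 0$ with $V=\bar\beta_n|_{t=1}$ the standard $(n-1)$-dimensional representation. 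Since $\totsp^d_n=B\,\B(d,d,n)$ by the earlier theorem and the bundle $\surf^d_n\hookrightarrow\totsp^d_n\to\conf_n$ has a section, $\B(d,d,n)\cong\pi_1(\surf^d_n)\rtimes\Br_n$ and the groups to compute are exactly the reduced relative homology $\widetilde H_{*+1}(\B(d,d,n);\Z)$ with respect to $B\Br_n$, i.e.\ $H_*(\Br_n;H_1(\surf^d_n;\Z))$.

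Next I would set up a chain-level model: tensoring the Salvetti complex of the braid arrangement (a finite free resolution of $\Z$ over $\Z[\Br_n]$) with $\bar\beta_n$ gives a finite complex $C_\bullet$ of free $\Z[t^{\pm1}]$-modules with $H_*(C_\bullet)=H_*(\Br_n;\bar\beta_n)$; for any $\Z[t^{\pm1}]$-algebra $R$ this produces a universal-coefficients sequence relating $H_*(\Br_n;\bar\beta_n\otimes_{\Z[t^{\pm1}]}R)$ to $H_*(\Br_n;\bar\beta_n)\otimes R$ and $\Tor_1^{\Z[t^{\pm1}]}\!\big(H_{*-1}(\Br_n;\bar\beta_n),R\big)$. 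Together with the exact sequence above, everything is governed by the single $\Z[t^{\pm1}]$-module $H_*(\Br_n;\bar\beta_n)$ --- a torsion module whose elementary divisors are products of cyclotomic polynomials $\Phi_e$ --- and extracting its structure from $C_\bullet$ is the principal technical task. This is where the parity of $n$ enters, forcing a key part of the differential of $C_\bullet$ to vanish and yielding the closed answer below; for $n$ and $d$ both even an extra contribution survives, which is the case left out of part~(1).

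For part~(1): fix a prime $p$ and write $d=p^{a}m$ with $p\nmid m$; I must show the $p$-primary part of $H_i(\Br_n;H_1(\surf^d_n;\Z))$ has exponent at most $p^{a}$. Over $\Z_{(p)}$ the polynomial $1+t+\dots+t^{d-1}=\prod_{e\mid d,\,e>1}\Phi_e$ factors, grouping the cyclotomic factors by the prime-to-$p$ part of $e$, into pairwise coprime pieces (two different groups reduce mod $p$ to powers of cyclotomic polynomials of coprime order), so $H_1(\surf^d_n;\Z)\otimes\Z_{(p)}$ splits accordingly and so does $H_i(\Br_n;-)$. Each summand has a finite filtration whose graded pieces are $\bar\beta_n$ specialized at primitive $e$-th roots of unity; those with $p\nmid e$ live over rings unramified at $p$ and, by the previous step, contribute $p$-torsion-free homology, while the remaining extensions are controlled by the different of $\Z[\zeta_{p^{a}}]/\Z$, whose $p$-valuation is bounded in terms of $a$; a long-exact-sequence/spectral-sequence bookkeeping then gives $p^{a}H_i(\Br_n;-)_{(p)}=0$. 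The argument is clean precisely when one is not in the case $p=2$ with $n$ and $d$ both even, which is the hypothesis in the statement.

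For part~(2): now $n$ is odd and $p\mid d$, and one tensors with $\F_p$. In $M\otimes\F_p$ the summand indexed by the $p$-part of $d$ is $\bar\beta_n\otimes_{\Z[t^{\pm1}]}\F_p[t]/(t-1)^{p^{a}-1}$, which --- once the computation is shown independent of $a\ge1$, and once the prime-to-$p$ summands are checked not to affect the $\Z_p$-rank (again using that $n$ is odd) --- is just $H_1(\surf^p_n;\F_p)$, since $1+t+\dots+t^{p-1}\equiv(t-1)^{p-1}$ mod $p$; so the problem reduces to the case $d=p$. I would then compute $H_*(\Br_n;H_1(\surf^p_n;\F_p))=\widetilde H_{*+1}(\B(p,p,n);\F_p)$ relative to $B\Br_n$ from $C_\bullet$: filtering by the powers of $t-1$ gives a spectral sequence with graded pieces $H_*(\Br_n;V\otimes\F_p)$, which by a Shapiro argument for the permutation module is governed by the $\F_p$-homology of braid groups of punctured disks; assembling generating functions, the classical recursive (Dyer--Lashof) structure of $\bigoplus_nH_*(\Br_n;\F_p)$ produces the infinite product, while the comparison of $V$-coefficients with trivial coefficients together with the filtration produces the rational factor $\dfrac{qt^3}{(1-t^2q^2)(1-t^2)}$, giving $\widetilde P_p(q,t)$. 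The main obstacles, in my estimation, are: the explicit determination of $H_*(\Br_n;\bar\beta_n)$ as a $\Z[t^{\pm1}]$-module from the Salvetti complex; the stabilization in $a$ and the vanishing of the prime-to-$p$ contributions mod $p$; and the exact identification of the resulting generating function with $\widetilde P_p$ --- and it is exactly at these points that the hypothesis $n$ odd does its real work.
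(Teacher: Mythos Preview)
Your strategy is coherent and, in outline, could be made to work, but it is genuinely different from what the paper does, and several of the steps you flag as ``obstacles'' are in fact the entire content of the proof.

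\textbf{What the paper does.} The paper never passes through the Burau representation or a cyclotomic decomposition of the coefficient module. Instead it uses a \emph{geometric} Mayer--Vietoris decomposition $\totsp_n^d=\confmd{n}^d\cup N$ (with $N$ a tubular neighbourhood of the ramification locus), producing the long exact sequence~\eqref{eq:bia1}/\eqref{eq:bia2}. The two nontrivial terms are identified, via Shapiro, with homology of $\Gb{n}$ with coefficients in $\ring[t]/(1+t)$ and $\ring[t]/(1-(-t)^d)$; the connecting map factors as $\iota=J\circ\tau\circ\mu_*$ where $\tau$ is the transfer for the $d$-fold cover $\confmd{n}^d\to\confm{n}$ and is computed explicitly as multiplication by $[d]_{(-t)}$. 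Part~(1) then falls out in two pieces: for $p\nmid d$ one shows $\iota$ is an isomorphism over $\F_p$ directly (Lemma~\ref{lem:tau_invertible}, Theorem~\ref{th:only_d_torsion}); for $p\mid d$ one bounds torsion to $p^{k+1}$ by a Bockstein computation on $H_*(\Gb{n};\F_p[t]/(1-(-t)^d))$ (Lemma~\ref{lem:no2tor2}) and then sharpens to $p^k$ by proving the short exact sequence of kernel/cokernel splits (Proposition~\ref{prop:split}). The odd-$d$ clause is handled separately by the covering maps $\rho^{(d',d)}$ (Theorem~\ref{thm:coverings}). Part~(2) is then a direct count of the explicit bases from \cite{calmar}.

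\textbf{Where your sketch is thin.} Your argument for part~(1) hinges on two unproved claims: (i) that $H_*(\Br_n;\bar\beta_n|_{t=\zeta_e})$ is $p$-torsion free when $p\nmid e$, and (ii) that the extensions among these pieces contribute exponent at most $p^a$. Claim~(i) is not a formality: it is essentially equivalent to the paper's Theorem~\ref{th:only_d_torsion}, and your ``by the previous step'' does not supply it. Claim~(ii) via the different of $\Z[\zeta_{p^a}]$ is suggestive, but a naive filtration argument typically overshoots (this is exactly why the paper first proves $p^{k+1}$ and then needs the splitting argument of \S\ref{sec:no4tor} to get down to $p^k$). For part~(2), ``assembling generating functions via Dyer--Lashof structure'' is the whole computation; the paper obtains the closed form by reading off the explicit generators $\gs,\gp$ of $h_i(n,p),h_i'(n,p)$ from \cite{calmar}, not from an abstract product formula.

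\textbf{Comparison.} Your Burau/cyclotomic viewpoint is conceptually attractive and would unify the role of $d$, but to make it rigorous you would end up redoing the module-theoretic analysis of $H_*(\Gb{n};\F_p[t^{\pm1}])$ that the paper imports from \cite{calmar}; the paper's geometric decomposition has the advantage that the hard analytic input is already packaged there, and the transfer map makes the dependence on $d$ transparent (multiplication by $[d]_{(-t)}$). In short: your route is different and plausible, but as written it defers precisely the computations that constitute the theorem.
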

When $d$ is square-free and $d$ or $n$ is odd Theorem \ref{thm:homologyintro} completely determines the homology groups $H_*(\totsp_{n}^d, \conf_n)$ with integer coefficients.

\begin{thm}[see Theorems \ref{thm:stabilization}, \ref{thm:stablepoincare}]\label{thm:stableintro}
Consider homology with integer coefficients. 
\begin{enumerate}[wide=0pt]
\item The homomorphism 
$$
H_i(\Br_n; H_1(\surf_n^d)) \to H_i(\Br_{n+1}; H_1(\surf_{n+1}^d))
$$
is an epimorphism for $i \leq \frac{n}{2}-1 $
and an isomorphism for $i < \frac{n}{2}-1$.
\item Let $p$ be a prime that does not divide $d$.
For $n$ even the group $H_i(\Br_n; H_1(\surf_n^d))$ has no $p$ torsion 
when $\frac{pi}{p-1}+3  \leq n$ and no free part for $i+3 \leq n$. In particular for $n$ even when $\frac{3i}{2}+3 \leq n$ the group $H_i(\Br_n; H_1(\surf_n^d))$ has only torsion that divides $d$. 
\item 
Let $p$ be a prime that divides $d$. The Poincar\'e polynomial of the stable homology $H_i(\Br_n;H_1(\surf_n^d;\Z))\otimes \Z_p$ as a $\Z_p$-module is the following:
$$
P_p(\Br;H_1(\Sigma^d))(q) = \frac{q}{1-q^2} \prod_{j \geq 0} \frac{1+q^{2p^j-1}}{1-q^{2p^{j+1}-2}}.
$$
\end{enumerate}
\end{thm}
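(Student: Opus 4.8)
The plan is to deduce all three parts from the unstable information already assembled, together with the standard homological stability machinery for braid groups with twisted coefficients. For part (1), I would present $H_1(\surf_n^d)$ as the monodromy representation on the first homology of the fibre and observe that the family of representations $\{H_1(\surf_n^d)\}_n$ forms a coefficient system of finite degree (polynomial degree one after tensoring with a field, coming from the fact that $H_1(\surf_n^d)$ is, up to the boundary classes, built out of the permutation-type module on the $n$ ramification points). One then feeds this into the twisted homological stability theorem for the braid groups (in the form of Randal-Williams--Wahl, or the earlier Hatcher--Wahl / Cohen--Randal-Williams results), which for a coefficient system of degree $1$ gives surjectivity in the range $i\le \frac{n}{2}-1$ and bijectivity for $i<\frac{n}{2}-1$; the numerology $\frac{n}{2}$ matches the classical slope-$2$ stability range for $\Br_n$. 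The one point requiring care is to check that the stabilization maps on the fibres $\surf_n^d\hookrightarrow\surf_{n+1}^d$ (adding one more branch point near the boundary) are compatible with the group-level stabilization $\Br_n\hookrightarrow\Br_{n+1}$, i.e. that we genuinely have a map of the relevant simplicial/monoidal objects; this is essentially a diagram-chase using the geometric description of the $\frac1d$-twist given before Theorem \ref{thm:action}.

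For part (2), fix a prime $p\nmid d$. Over $\Z[1/d]$ (in particular after inverting $p$) the monodromy representation $H_1(\surf_n^d;\Z[1/d])$ splits, by the eigenspace decomposition for the order-$d$ deck transformation, into a sum of summands each of which is (a twist of) a rank-one or standard Burau-type local system; crucially, for $p\nmid d$ the relevant reduced pieces are, after base change to $\F_p$, of the form already handled in the trivial-coefficient computations for $\Br_n$ (cf.\ Fuks, Vainshtein, Cohen), whose $p$-torsion and rational ranks are governed by the well-known vanishing ranges. I would therefore reduce the statement to those classical bounds: the $p$-torsion of $H_i(\Br_n;-)$ for such a local system vanishes once $n$ is large compared to $i$, specifically $\frac{pi}{p-1}+3\le n$ (the $+3$ absorbing the two boundary classes plus the degree shift in the long exact sequence relating $H_*(\totsp_n^d,\conf_n)$ to the twisted homology), and the free part vanishes for $i+3\le n$ because the stable rational homology of $\totsp_n^d$ relative to $\conf_n$ is concentrated in low degree (consistent with Chen's rational computation cited in the introduction). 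Specializing $p=2$ and combining the two bounds yields the "$\frac{3i}{2}+3\le n\Rightarrow$ only torsion dividing $d$" statement, since for $p=2$ one has $\frac{pi}{p-1}=2i$ but the sharper stable vanishing improves the constant to $\frac32$.

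For part (3), once stability (part (1)) is known, the stable group $\displaystyle \lim_n H_i(\Br_n;H_1(\surf_n^d))\otimes\Z_p$ is computed by letting $t\to$ (its "top" contribution) in the two-variable Poincaré series $\widetilde P_p(q,t)$ of Theorem \ref{thm:homologyintro}(2); concretely the stable Poincaré polynomial is obtained from $\widetilde P_p(q,t)$ by the usual device of extracting the stable part, which here amounts to dividing out the factor $\frac{t^3}{(1-t^2)(1-t^2q^2)}$ that records the unstable "edge" and the two boundary classes, and setting $t=1$ in what remains. This produces
\[
P_p(\Br;H_1(\Sigma^d))(q)=\frac{q}{1-q^2}\prod_{j\ge 0}\frac{1+q^{2p^j-1}}{1-q^{2p^{j+1}-2}},
\]
which is exactly the claimed formula. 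The main obstacle I anticipate is part (2): verifying that the local systems arising from $H_1(\surf_n^d;\F_p)$ for $p\nmid d$ really do reduce to the well-understood constant-type coefficients on $\Br_n$ (rather than, say, some genuinely new Burau specialization with its own torsion), and pinning down the exact constants $+3$ and the slope $\frac{p}{p-1}$ rather than a weaker range — this requires carefully threading the eigenspace decomposition through the Serre spectral sequence of $\pi$ and through the known (Cohen--Vainshtein) vanishing ranges for $H_*(\Br_n;\F_p)$, and keeping track of the degree shift coming from $H_*(\totsp_n^d,\conf_n)=H_{*-1}(\Br_n;H_1(\surf_n^d))$. Parts (1) and (3), by contrast, are largely formal once the degree-one coefficient system is identified and the two-variable series of Theorem \ref{thm:homologyintro} is in hand.
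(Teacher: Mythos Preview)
Your plan diverges from the paper's proof at every part, and in Part~(2) there is a genuine gap.

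\textbf{Part (1).} The paper does \emph{not} rely on the Randal-Williams--Wahl machinery; it notes explicitly that those results yield only isomorphism for $i\le \frac{n}{2}-2$, slightly weaker than the claimed $i<\frac{n}{2}-1$. Instead the paper uses the Mayer-Vietoris long exact sequence \eqref{eq:bia2} together with the already-established stability ranges for $H_i(\Gb{n};\F[t]/(1+t))$ and $H_i(\Gb{n};\F[t]/(1-(-t)^d))$ (Proposition~\ref{prop:stab_rank}, from \cite{calmar}), checks that $\mu_*$, $\tau$, $J$ all commute with stabilization (Lemma~\ref{lem:commutative_stab}), and applies the five lemma prime by prime over $\Z_p$ and $\Q$, then lifts to $\Z$ via universal coefficients. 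Your polynomial-coefficient-system argument is plausible in outline, but you would need to verify that it actually delivers the sharp slope, which the authors evidently doubted.

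\textbf{Part (2).} Here your approach has a real problem. The eigenspace decomposition of $H_1(\surf_n^d;\Z[1/d])$ under the $\Z/d$-deck action does \emph{not} produce constant-type local systems: the nontrivial eigenspaces are reduced Burau representations specialised at primitive $d$th roots of unity, and their $\F_p$-homology for $p\nmid d$ is not covered by the Fuks--Vainshtein--Cohen computations for trivial coefficients. The paper's argument is entirely different and much simpler: for $n$ odd, Theorem~\ref{th:no4tor} already says $H_i(\Br_n;H_1(\surf_n^d;\Z))$ has only torsion dividing $d$. One then uses the $p$-dependent stability range of Proposition~\ref{prop:stab_rank} (isomorphism for $\frac{p(i-1)}{p-1}+2<n$, and $i+1<n$ for $p=0$) to transfer this vanishing from odd $n$ to the adjacent even $n$, which is exactly where the constants $\frac{pi}{p-1}+3$ and $i+3$ come from. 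Your attempt to source these constants from classical vanishing ranges for $H_*(\Br_n;\F_p)$ with trivial coefficients would not yield them.

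\textbf{Part (3).} Your ``divide out a factor and set $t=1$'' recipe is not literally well-defined (the series diverges at $t=1$) and the two-variable series of Theorem~\ref{thm:poincare} only covers odd $n$. The paper instead reads off the stable $\Z_p$-ranks of $\ker\iota$ and $\coker\iota$ directly from the explicit bases in Remark~\ref{rem:basi_mod_2}, obtaining $\frac{q}{1-q^2}\prod_{j\ge 0}\frac{1+q^{2p^j-1}}{1-q^{2p^{j+1}-2}}$ for each; the split short exact sequence \eqref{eq:short_split} then gives the stable polynomial after dividing by $(1+q)$. The end formula agrees with yours, but the derivation is a direct count, not a formal limit.
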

When $d$ is square-free Theorem \ref{thm:stableintro} completely determines the stable homology groups $H_*(\totsp_{n}^d, \conf_n)$ with integer coefficients. 

We also find unstable free components in the top and top$-1$ dimension for $n$ and $d$ both even (see Theorem \ref{thm:unstable}), coherently with the computations in \cite{chen}.

Since we have $H_i(\B(d,d,n)) \simeq H_i(\Br_n) \oplus H_{i-1}(\Br_n;H_1(\surf_n^d;\Z))$ 
and the homology of the braid group with trivial coefficients is classically known (\cite{arnold70}) we get in particular:

\begin{thm}[see Theorem \ref{thm:complexbraidstability}]
The homomorphism $$
H_i(\B(d,d,n)) \to H_{i}(\B(d,d,n+1))
$$
induced by the natural inclusion 
$\B(d,d,n) \into \B(d,d,n+1)$ 
is an epimorphism for $i \leq \frac{n}{2} $
and an isomorphism for $i < \frac{n}{2}$.
\end{thm}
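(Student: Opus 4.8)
The plan is to deduce the stability statement for $H_i(\B(d,d,n))$ directly from the already-established splitting $H_i(\B(d,d,n)) \simeq H_i(\Br_n) \oplus H_{i-1}(\Br_n;H_1(\surf_n^d;\Z))$, reducing the problem to two stability statements on the two summands. For this one must first check that the natural inclusion $\B(d,d,n) \into \B(d,d,n+1)$ is compatible with the splitting — that is, that under the isomorphism above it carries the summand $H_i(\Br_n)$ to $H_i(\Br_{n+1})$ via the standard braid-group inclusion $\Br_n \into \Br_{n+1}$, and the summand $H_{i-1}(\Br_n;H_1(\surf_n^d))$ to $H_{i-1}(\Br_{n+1};H_1(\surf_{n+1}^d))$ via the map appearing in Theorem~\ref{thm:stabilization}. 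This compatibility should follow from the geometric description of the inclusion (adding one ramification point near the boundary of the disk, which extends the covering surface $\surf_n^d$ to $\surf_{n+1}^d$ in a way compatible with the monodromy actions), so that the section of $\pi$ and the Serre-spectral-sequence splitting are natural in $n$.

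Once naturality of the splitting is in hand, the result is a formal consequence: an inclusion $A_n\oplus B_n \to A_{n+1}\oplus B_{n+1}$ that respects the direct-sum decomposition is an epimorphism (resp.\ isomorphism) as soon as each of $A_n\to A_{n+1}$ and $B_n\to B_{n+1}$ is. For the first summand, classical homological stability for the braid groups (Arnol'd) gives that $H_i(\Br_n)\to H_i(\Br_{n+1})$ is an isomorphism in a range roughly $i \le n/2$ — in fact the braid groups are homologically stable with $H_i(\Br_n)$ independent of $n$ for $n\ge 2i$, which is a stronger range than we need. For the second summand, Theorem~\ref{thm:stabilization} gives precisely that $H_{i-1}(\Br_n;H_1(\surf_n^d)) \to H_{i-1}(\Br_{n+1};H_1(\surf_{n+1}^d))$ is an epimorphism for $i-1 \le \frac{n}{2}-1$ and an isomorphism for $i-1 < \frac{n}{2}-1$, i.e.\ an epimorphism for $i\le n/2$ and an isomorphism for $i<n/2$. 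Intersecting the two ranges, the map on $H_i(\B(d,d,n))$ is an epimorphism for $i\le n/2$ and an isomorphism for $i<n/2$, which is exactly the claim. (One should double-check that the braid-group range is at least as good as the twisted-coefficient range in the relevant degrees; since $H_i(\Br_n)$ stabilizes for $n\ge 2i$ this is immediate.)

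The main obstacle I expect is the naturality/compatibility claim in the first paragraph: verifying that the inclusion of complex braid groups $\B(d,d,n)\into\B(d,d,n+1)$ corresponds, under the identification of $\totsp_n^d$ as its classifying space, to the obvious "add a point near the boundary" inclusion of superelliptic families, and that this inclusion respects the section of $\pi$ used to split the homology. This is a geometric bookkeeping argument rather than a hard computation: one models $\B(d,d,n+1)$ on the family over $\conf_{n+1}$, restricts to configurations where the $(n+1)$-st point sits in a small disk disjoint from the first $n$, identifies the restricted family with (a deformation retract of) $\totsp_n^d \times (\text{point})$ up to the extra handle, and tracks the section and the monodromy representation through this identification. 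Everything else is a formal splicing of the two input theorems.
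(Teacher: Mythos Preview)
Your proposal is correct and follows essentially the same approach as the paper: the paper's proof of Theorem~\ref{thm:complexbraidstability} simply invokes the splitting $H_i(\B(d,d,n)) \simeq H_i(\Br_n) \oplus H_{i-1}(\Br_n;H_1(\surf_n^d;\Z))$, then appeals to Arnol'd's stability for the first summand and Theorem~\ref{thm:stabilization} for the second, with the same shift in degree you describe. Your discussion of naturality of the splitting under the inclusion $\B(d,d,n)\into\B(d,d,n+1)$ is a point the paper leaves implicit, so your caution there is warranted but does not represent a different strategy.
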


Notice that there were very few cohomological computations about the homology of complex braid groups of type $B(d,d,n);$ in fact, the only known computations (see \cite{calmar}, not more than the second homology groups) used methods based on a resolution given in \cite{dehornoy_lafont} for a Garside monoid introduced in \cite{corranpicantin}. This method seems too complicated to be used for higher homology groups.

%

%

This paper is a natural continuation of \cite{cal_sal_superelliptic}, where we considered the case $d=2.$ In particular, the main ingredients which here we generalize are: a Mayer-Vietoris geometrical decomposition of the space $E_n^d,$ which allows to reduce the computation to the local homology of some "pieces" which are identified with regular coverings of the configuration space $\confm{n}$ for the Artin group of type $B;$ the adaptation and the use of some of the homology computations given in \cite{calmar} to our case. 
For reader convenience, we collect most of the results we need in Section \ref{sec:homol_artin}.
Sections \ref{sec:homol_artin}--\ref{sec:no4tor} and \ref{sec:stab}  parallel Sections 3--7 of \cite{cal_sal_superelliptic},  generalizing the results already obtained for $\totsp^2_d$. We will directly refer to \cite{cal_sal_superelliptic} when using the results presented there with no variations. 

Some tables with explicit computations for $d=3,4,5,6$ are provided in the final section.

%
%
%
%

\setcounter{tocdepth}{1}
\tableofcontents

\section{General setting}

We recall that the fundamental group of the configuration space $\confn$ introduced before is the classical braid group 
$\Ga{n-1}  = \Br_n,$ and that $\conf_n$ is a $K(\Br_n,1)$ (see \cite{fa_neu_62}).  We will make use also of the configuration spaces $\confm{n}$ of $n$ unordered distinct points in $\disk$ with one additional distinct marked point. The fundamental group of $\confm{n}$ is the Artin groups $\Gb{n}$ of type $\mathrm{B}$ and (see for example \cite{bri_73}) the space $\confm{n}$ is a $K(\Gbn,1).$

Fixed  $\P = \{ x_1, \ldots, x_n \}\in \confn$, the set $$\surf_n^d:= \{(z,y) \in  \disk \times \C | y^d = (z-x_1)\cdots(z-x_n) \}$$  is a connected oriented surface with  $\gcd(n,d)$ boundary components. The genus of $\surf_n^d$ is 
$$g = 1 + \frac{(d-1)n-d-\gcd(n,d)}{2},$$
in particular for $d=2$ we have
$g = \frac{n-1}{2}$ for odd $n$ and $g = \frac{n-2}{2}$ for $n$ even.



Besides the classical braid group, in what follows we will make large use also of the Artin group $\Gb{n}$ of type $\mathrm{B}$.


We consider the family  
$$
\totsp^d_n := \{(\P, z,y ) \in \conf_n  \times \disk \times \C| y^d = (z-x_1)\cdots(z-x_n) \}
$$
which is a fibre bundle with  natural projection $\pi:\totsp^d_n\to \conf_n,$ mapping $(\P, y, z) \mapsto \P$. The fiber of $\pi$ is the surface $\surf^d_n,$ which  is a $d$-fold covering of $\disk,$ totally ramified along $\P$; we will identify $\P$ with a subset of $\surf^d_n$. 

We define $\ddiskP^d:= \surf^d_n \setminus \P$ as the $d$-fold covering of $\disk \setminus \P$ induced by $\surf^d_n \to \disk$.
Since $H_1(\disk \setminus \P)$ has rank $n$ we have that $H_1(\ddiskP^d)$ has rank $d(n-1)+1$. 

The projection 
$\totsp^d_n \stackrel{p}{\longrightarrow} \conf_n \times \disk 
$ given by $p:(\P,z,y) \mapsto (\P, z)$
makes $\totsp^d_n$  a $d$-fold covering of $\conf_n \times \disk,$ which ramifies over  $\confm{n-1}\equiv \{ (\P, z) \in \conf_{n} \times \disk | z \in \P\}$. The complement of
$\confm{n-1} \subset \conf_n \times \disk$ identifies with $\confm{n}$, so the complement \
 $\confmd{n}^d :=\ \totsp^d_n \setminus (p^{-1}(\confm{n-1}))$   is a $d$-fold covering of $\confm{n}.$





\begin{rem}\label{rem:globalsection}
The fibre bundle  $\surf^d_n \into \totsp^d_n \stackrel{\pi}{\to} \conf_n$ admits a global section (see Definition \ref{def:section}) so $H_*(\totsp^d_n) = H_*(\totsp^d_n,\conf_n) \oplus H_*(\conf_n)$ and $$H_i(\totsp^d_n, \conf_n) = H_{i-1}(\conf_n; H_1(\surf^d_n)).$$
\end{rem}

Recall also that the $H_1(\surf^d_n)$  is endowed with an anti-symmetric form given by the cap product. There is a monodromy action of $\pi_1(\conf_n)$ on $\operatorname{Homeo}(\surf^d_n)$  associated to the fibering $\pi;$  the induced action onto $H_1(\surf^d_n)$ preserves this form. We describe this monodromy in more details in the following section. For $d=2$ the monodromy representation maps the standard generators of the braid groups to Dehn twists and is called geometric monodromy (see \cite{per_van_92}, \cite{waj_99}).
Hence we can consider $H_1(\surf^d_n)$ as a $\pi_1(\conf_n)= \Br_n$-representation; we write also $\sym{n,d} :=H_1(\surf_n^d)$.

We will need to consider 
the natural map of the braid group $\Br_n = \Ga{n-1}$ onto the permutation group $\perm_n$ on $n$ letters. This map induces a representation of  the group $\Br_n $ onto $\Z^n$ by permuting cohordinates. We write $\st_n$ for this representation of $\Br_n$.

Along all this paper, when not specified, the homology is understood to be computed with constant coefficients over a ring $\ring$.

\section{Generalized twists}
\newcommand{\g}{\gamma}
\newcommand{\tkd}{t_{k,d}}

We give a picture of the surface $\Sigma_n^d$ as a ramified covering of the disk $\disk$ by choosing a system of cuts in $\disk$ connecting the points of $\P$ with the the boundary, as in Figure \ref{fig:disk}. 
\begin{figure}[htb]
	\begin{center}
		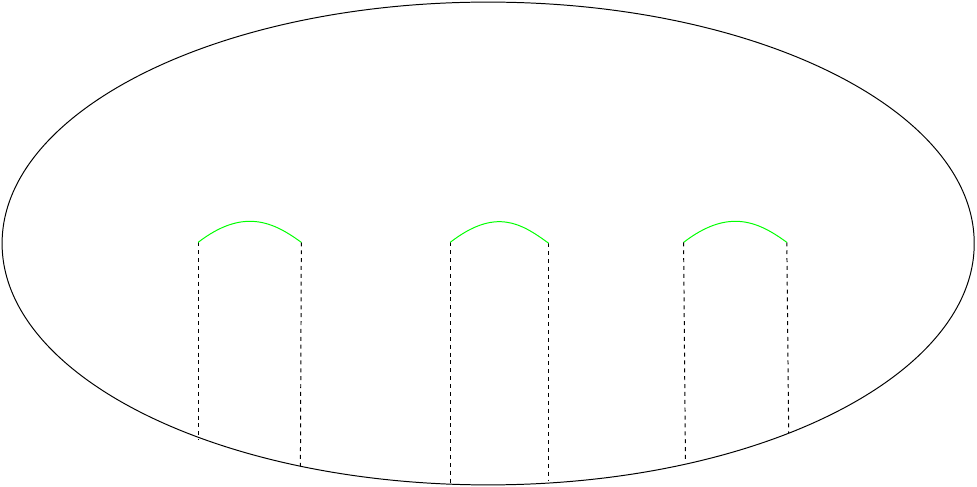
	\end{center}
	\caption{The disk $\disk$ with cuts and arcs.}
	\label{fig:disk}
\end{figure}
The arc $s_k$ connecting $x_k$ with $x_{k+1}$ lifts to the $i$th-sheet to a path $\g^k_i$ which 
connects $x_k$ to $x_{k+1}$ ($i=1,\dots,d;$ \ as said in the previous section, we are identifying
 the $x_k$'s with points in $E_n^d$). Locally in $\surf_{n}^d$, around each ramification point $x_k,$ the sheets 
follow each other in the anticlockwise ordering (see 
\Cref{fig:intorno,figillustration3,figillustration4}). 
\begin{figure}[htb]
	\begin{center}
		\def\svgwidth{\columnwidth/2} 
		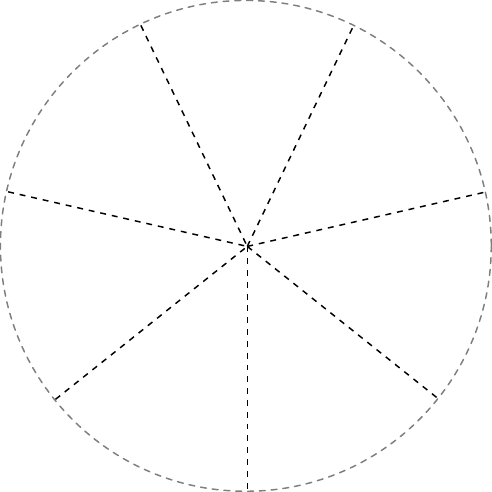
	\end{center}
	\caption{The local picture of sheets following in anticlockwise order around a
 ramification point in $\surf_n^d$.}
	\label{fig:intorno}
\end{figure}
\begin{figure}[htb]
	\begin{center}
		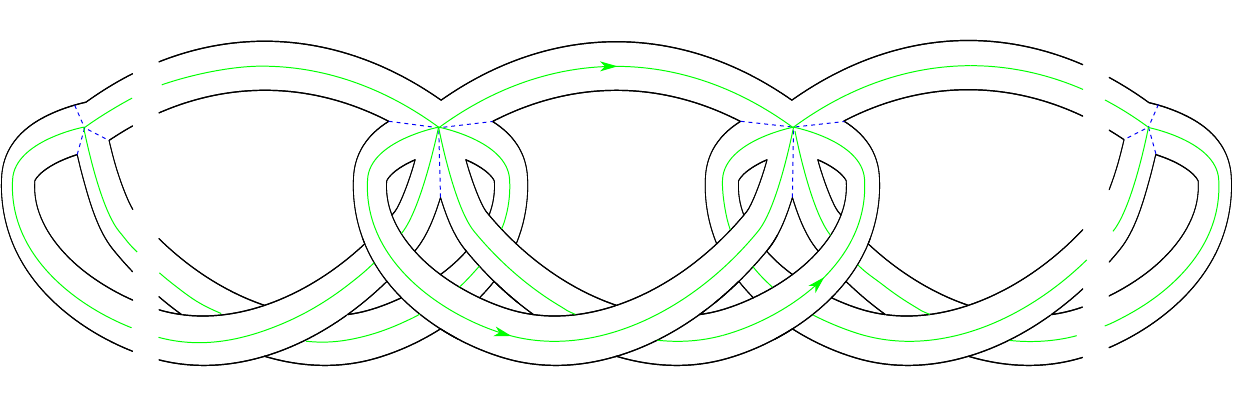
	\end{center}
	\caption{Example of the ramified cover of $\disk$ for $d=3$.}
	\label{figillustration3}
\end{figure}

\begin{figure}[htb]
	\begin{center}
		\def\svgwidth{\columnwidth} 
		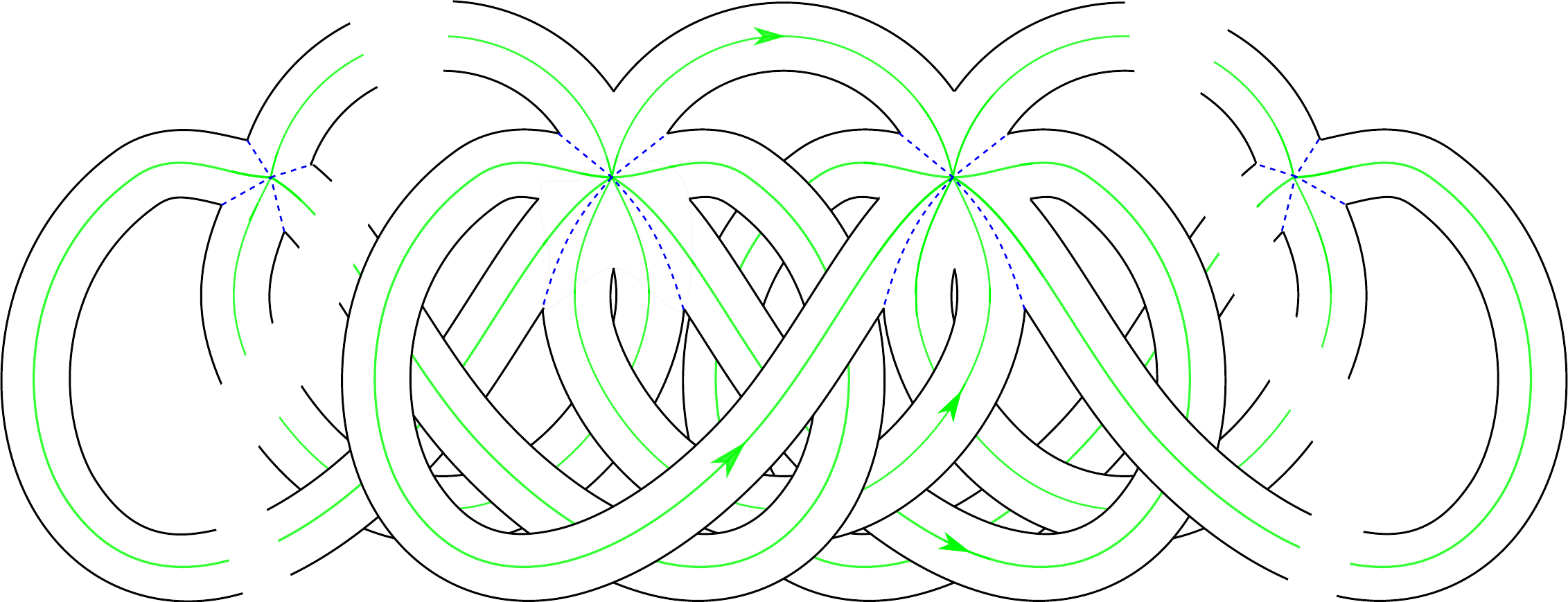
	\end{center}
	\caption{Example of the ramified cover of $\disk$ for $d=4$.}
	\label{figillustration4}
\end{figure}
Clearly, $\Sigma_n^d$ deformation retracts onto the graph with set of vertices $\P=\{x_1,\dots,x_n\}$ and edges $\g^k_i,\ k=1,\dots,n-1,\ i=1,\dots, d,$ therefore $b_1(\Sigma_n^d)=(n-1)(d-1).$ Let us give to $\g^k_i$ the orientation going from $x_k$ to $x_{k+1}.$ We can consider the circuits $a^k_i=\g^k_i(\g^k_{i+1})^{-1},$ $i=1,\dots,d$ (taking indices mod $d$); notice the relations $\sum_{i=1}^d[a_i^k]=0$ among their classes in $H_1$. A basis for $H_1(\Sigma_n^d)$ is given by the classes $[a^k_i],$ \ $k=1,\dots,n-1,\ i=1,\dots,d-1.$ 

Next, we give a precise description of the monodromy action for the bundle $\surf^d_n \into \totsp^d_n \stackrel{\pi}{\to} \conf_n.$  

Let $\sigma_k$ be the  standard generator for the braid group $\Br_n,$ given by an anti-clockwise
 half-twist around the arc $s_k,$  exchanging $x_k$ and $x_{k+1}$ and leaving everything outside 
a neighborhood of $s_k$ pointwise fixed. We find that  $\sigma_k$ lifts to an homeomorphism 
$t_{k,d}$ of $\Sigma_n^d$ which generalizes standard Dehn twist for $d=2$.  

Let $\Gamma_k=\cup_{i=1,\dots,d}\ \g^k_i;$ first, $\tkd$ is the identity outside a small 
neighborhood of $\Gamma_k.$  Inside a small neighborhood of $\Gamma_k,$  $t_{k,d}$ acts as 
``rotation around $\Gamma_k$ of a $\frac{2\pi}{d}$-angle." To be precise,   we describe the 
neighborhood of $\Gamma_k$ as a $2-$complex.

Let $Q=[0,1]\times [0,1]\subset \R^2,$ with coordinates $(x,y)$ and the standard $CW$-structure. We orient the horizontal edges according to increasing $x$-coordinate.
\begin{df}\label{polyneighbor}  
 For $i=1,\dots,d,$ $j=1,-1,$ let $Q_{i,j}$ be a copy of $Q,$ with coordinates $(x,y)_{i,j}.$ We define the $2$-complex $\tilde{T}_d$ as 
 $$\tilde{T}_d:=\ 
\left.
\left(\bigsqcup_{\substack{i=1,\dots,d\\ j=1,-1}}\ Q_{i,j}\right)
\right/
\left(\begin{array}{c} (1,y)_{i,1}\sim (1,y)_{i+1,-1} \\ (0,y)_{i,1}\sim (0,y)_{i-1,-1} \end{array}\right)
$$     
where the indices are considered $mod\ d.$

Now define
$$T_d\ :=\ \tilde{T}_d\ / \ {(x,0)_{i,1}\sim (x,0)_{i,-1} } \qquad\quad (i=1,\dots,d). $$
\end{df}      

Notice that 
$\tilde{T}_d$ is connected for odd 
$d,$ homeomorphic to a cylinder, while it has two connected components for even $d,$ both 
homeomorphic to a cylinder. 
Therefore  $T_d$ is an orientable surface with one boundary component for $d$ odd and two 
boundary components for $d$ even. 
In case $d$ even, make a preliminary step by attaching in $\tilde{T}_d$ the two edges corresponding to the index $i=1:$\  $(x,0)_{1,1}\sim (x,0)_{1,-1},\ x\in[0,1].$ Then $T_d$  is obtained from a regular polygon $\mathcal P_d$ having $2d$ edges and one hole for odd $d,$ having $2(d-1)$ edges and two holes for even $d,$ and attaching the opposite edges, as in \Cref{fig:poligoni}.  One verifies that the genus of $T_d$ is  $\left[\frac{d-1}{2}\right].$ 

\begin{figure}[htb]
	\begin{center}
		\def\svgwidth{\columnwidth} 
		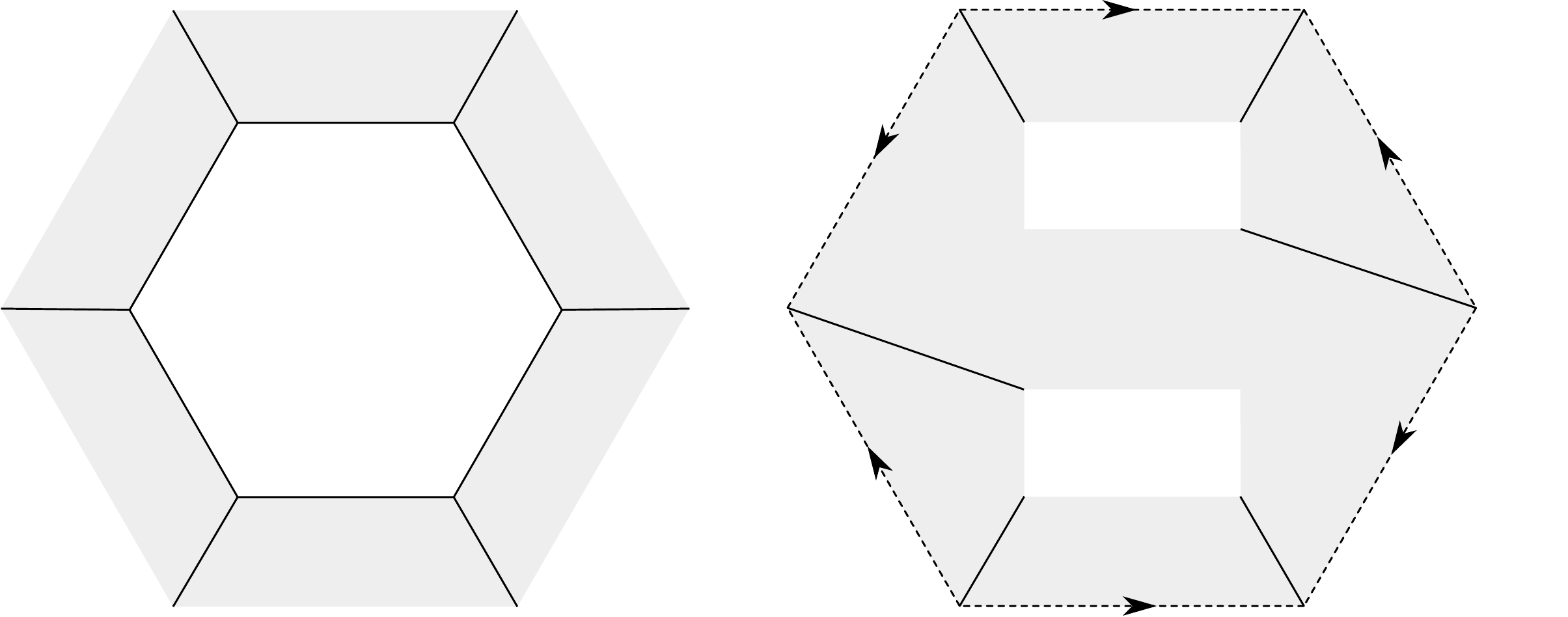
	\end{center}
	\caption{Examples of $\mathcal P_d$ for $d=3,4$. $T_d$ is obtained by attaching the opposite edges $\g_i$ according to the arrows.}
	\label{fig:poligoni}
\end{figure}

The image in $T_d$ of the set $\Gamma:=\bigsqcup_{i,j} \{([0,1]\times \{0\})_{i,j}\}$ is homeomorphic to the graph $\Gamma_k$ (where the vertices correspond to  the images in $T_d$ of $\bigsqcup_{i,j} \{(\{0,1\}\times \{0\})_{i,j}$).
Actually, there is an embedding  $j_k:T_d\to\Sigma_n^d$ 
taking $\Gamma$ to $\Gamma_k$ and $T_d$
to a small neighborhood of $\Gamma_k$ in $\Sigma_k $ 
(see \Cref{figillustration3,figillustration4}). 

\begin{df}\label{dtwist} For $h\in\R,\ 0\leq h\leq 1,$ we define the \emph{$h$-rotation} $\rho_h$ in $T_d$ as the map defined by:
$$\begin{array}{lcl} \rho_h((x,y)_{i,1}) &= &\left\{\begin{array}{ll} (x+h, y)_{i,1} & \mbox{if \ }x+h\leq 1\\ (2-x-h,y)_{i+1,-1} & \mbox{if \ } x+h \geq 1 \end{array} \right.; \\   
 & & \\
\rho_h((x,y)_{i,-1})& = &\left\{\begin{array}{ll} (x-h, y)_{i,-1} & \mbox{if \ } x-h\geq 0\\ (h-x,y)_{i+1,1} & \mbox{if \ } x-h \leq 0 \end{array} \right.  . \end{array}$$   
\bigskip

$\left(\mbox{ in one formula:\ 
$\rho_h((x,y)_{i,j}) = \left\{\begin{array}{lc} (x+jh, y)_{i,j} & \mbox{if \ }0\leq x+h\leq 1\\ (1-x+j(1-h),y)_{i+1,-j} & \mbox{otherwise  } \end{array}\right.$  }\right)$   
\bigskip

Let $\varphi:[0,1]\to[0,1]$ be a $C^{\infty}$ function such that: 
\begin{itemize}
	\item[i)] $\varphi(t)=1$ for $t<\epsilon;$ \   $\varphi(t)=0$ for $t>1-\epsilon$  ($\epsilon<<1$); 
	\item[ii)] $\varphi$  is decreasing in $\epsilon<t<1-\epsilon.$ 
\end{itemize}
We define the $\frac{1}{d}-twist$ on $T_d$ as the homeomorphism 
$$\tau_d((x,y)_{i,j})\ =\ \rho_{\varphi(y)}((x,y)_{i,j}).$$
\end{df}

\begin{figure}[htb]
	\begin{center}
		\def\svgwidth{\columnwidth} 
		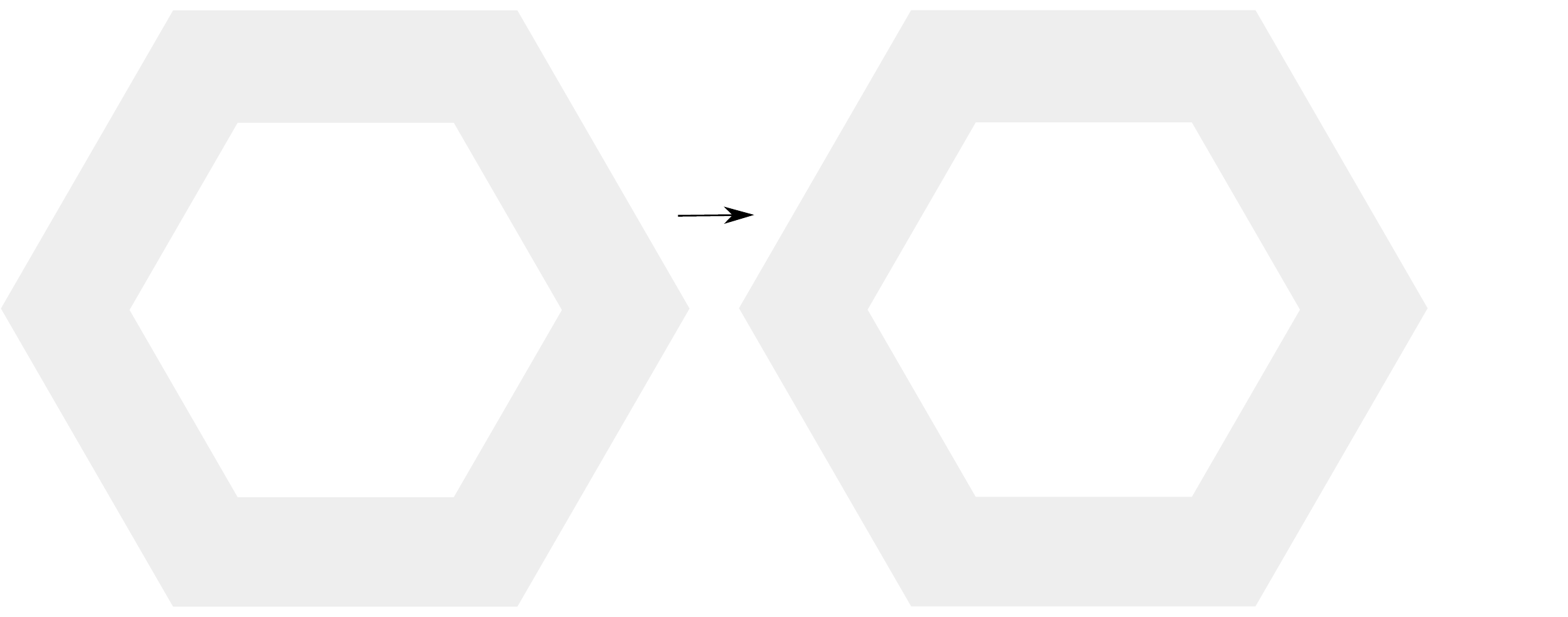
	\end{center}
	\caption{The twist $\tau_3$.}
	\label{fig:twistdispari}
\end{figure}
\begin{figure}[htb]
	\begin{center}
		\def\svgwidth{\columnwidth} 
		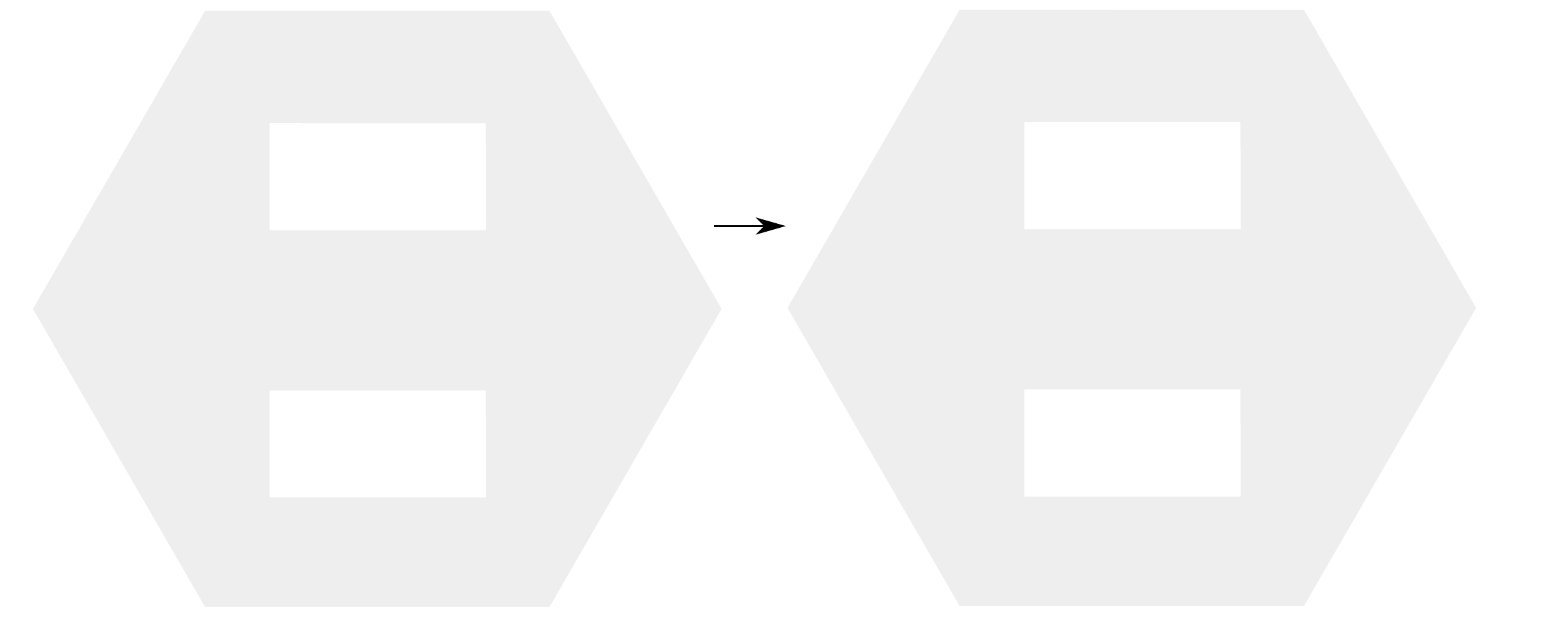
	\end{center}
	\caption{The twist $\tau_4$.}
	\label{fig:twistpari}
\end{figure}

In \Cref{fig:twistdispari,fig:twistpari} we represent the action of $\tau_d$ by using the polygon $\mathcal P_d.$ For odd $d,$ $\tau_d$ is the identity in the boundary of $T_d$ and is induced by a rotation of $\frac{\pi}{d}$  in the exterior boundary of $\mathcal P_d,$ in the sense taking $\g_i$ into $\g_{i+1},\ i=1,\dots,d.$  For even $d,$ $\tau_d$ is the identity in the boundary of $T_d$ (corresponding to the boundary of the two holes of $\mathcal P_d$) and is induced by a rotation of $\frac{2\pi}{d}$, one for each of the two halves of $\mathcal P_d$ determined by $\g_1,$ in the sense which takes $\g_i$ into $\g_{i+1},\ i=1,\dots,d.$

\begin{df}[generalized Dehn twist]\label{dehntwist} The corresponding $\frac{1}{d}-twist$ in the surface $\Sigma_n^d$ is the homeomorphism $\tkd$ induced by $\tau_d$ through the embedding $j_k.$\end{df}

\noindent In particular, $\tkd$ takes $\g^k_i$ into $(\g^k_{i+1})^{-1}$ (indices mod $d$). 

\begin{rem} Notice that for $d$ even $(t_{k,d})^{\frac{d}{2}}$ is a standard Dehn twist around a simple curve following the path $\prod_{i=1}^d (\g^k_i)^{(-1)^{i-1}}.$ So, we obtain an explicit root of a Dehn twist which appears to be simpler than the one described in \cite{marg_schl}.

\end{rem}

We denote by $(\ ,\ )$ the intersection product 
$$(\ ,\ ):H_1(\Sigma^d_n;\Z)\times H_1(\Sigma^d_n,\P_n;\Z)\to \Z$$
By the exact sequence
$$0\to H_1(\Sigma^d_n;\Z) \to H_1(\Sigma^d_n,\P_n;\Z) \to H_0(\P_n;\Z)=\Z^n\to H_0(\Sigma^d_n;\Z)
=\Z\to 0$$
it follows that $H_1(\Sigma^d_n,\P_n;\Z)$ is free of rank $(n-1)d,$ 
generated by the classes $[\g^k_i],\ k=1,\dots,n-1,\ i=1,\dots,d.$

It is easy to verify the following intersection products (we consider indices $i, j$ mod $d$):


\begin{equation}\label{interproducts}\begin{array}{lcl} ([a^k_i],[a^k_{j}])&= &\left\{ 
\begin{array}{lcl}  1 &\mbox{if}& j=i+1\\
-1 &\mbox{if}& j=i-1\\
0 & \mbox{otherwise} & \end{array}\right. \\[10pt]
([a^k_i],[a^{k+1}_{j}])&= &\left\{ \begin{array}{lcl}  -1 &\mbox{if}& j=i\\
1 & \mbox{if} & j=i+1 \\
0 & \mbox{otherwise} & \end{array}\right. \\[15pt]
([a^k_i],[\g^{k}_{j}])&= &\left\{ \begin{array}{lcl}  1 &\mbox{if}& j=i\\
-1 & \mbox{if} & j=i+1 \\
0 & \mbox{otherwise} & \end{array}\right. \\[15pt]
([a^k_i],[\g^{k+1}_{j}])&= &\left\{ \begin{array}{lcl}  
-1 & \mbox{if} & j=i+1 \\
0 & \mbox{otherwise} & \end{array}\right. \\[15pt]
([a^{k+1}_i],[\g^{k}_{j}])&= &\left\{ \begin{array}{lcl} 
1 & \mbox{if} & j=i \\
0 & \mbox{otherwise} & \end{array}\right. \\[15pt]

\end{array}\end{equation}

\begin{thm} \label{thm:action}
\begin{enumerate}
\item[]
\item[(a)] 
The $\frac{1}{d}-twist$ $\tkd$ is the homeomorphism induced by the monodromy of the bundle $\Sigma_n^d\hookrightarrow E^d_n\to \C_n$ applied to the half-twist $\sigma_k\in\Br_n.$ 
\item[(b)] 
$\tkd$ induces on $H_1(\Sigma^d_n;\Z)$ the automorphism
\begin{equation}\label{h1action}(\tkd)_*:\ a \to a-\sum_{i=1}^{d}\ (a,\g^k_i)[a^k_i]\end{equation}
\item[(c)]\label{thm:actionc}
Let $t_{a^k_i}$ be the Dehn twist associated to the simple curve $a^k_i,$ $i=1,\dots,d-1;$ then 
\begin{equation}\label{dehntwists}(\tkd)_*\ =\ (t_{a^k_1}\dots t_{a^k_{d-1}})_*\end{equation}
\end{enumerate}
\end{thm}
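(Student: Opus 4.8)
The plan is to prove the three parts in order, since (b) and (c) both follow once (a) identifies $\tkd$ with the geometric monodromy.

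\textbf{Part (a).} I would argue directly from the definition of the half-twist $\sigma_k$ and the covering structure. The braid $\sigma_k$ is supported in a small disk neighborhood $N$ of the arc $s_k$ in $\disk$, where it performs an anticlockwise $180^\circ$ half-twist exchanging $x_k$ and $x_{k+1}$, extended by the identity outside $N$. The preimage $p^{-1}(N)$ in $\Sigma_n^d$ is precisely a neighborhood of $\Gamma_k=\bigcup_i\g^k_i$, and the $d$-fold ramified cover of $N$ (ramified at the two points $x_k,x_{k+1}$) is exactly the model surface $T_d$ of Definition \ref{polyneighbor}: the $2d$ rectangles $Q_{i,j}$ are the lifts of the two halves of $N$ cut along $s_k$, glued along the lifts $\g^k_i$ of $s_k$ and along the boundary arcs of $N$ according to how the sheets cycle anticlockwise around $x_k$ and $x_{k+1}$ (\Cref{fig:intorno}). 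The key point is that an anticlockwise half-twist of $N$ lifts, under a totally ramified $d$-fold cover, to the homeomorphism that rotates the sheets by $\frac{2\pi}{d}$ near the two ramification points and is the identity on $\partial N$ — and this is exactly $\rho_{\varphi(y)}=\tau_d$, since $\varphi$ interpolates between full rotation $h=1$ on the core $\Gamma$ ($y$ small) and no rotation $h=0$ on $\partial T_d$ ($y$ near $1$). Checking this lift is the main obstacle: one must verify that the monodromy of the cover around the loop $\partial N$ is trivial (so the lift can be the identity there, giving a well-defined homeomorphism of the closed-up surface) while the "local monodromy" forcing the $\frac1d$-rotation is recorded correctly by the gluing in $\tilde T_d$. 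I would do this by tracking a point on sheet $i$ as the half-twist is performed and confirming it lands on sheet $i+1$, matching the identification $(1,y)_{i,1}\sim(1,y)_{i+1,-1}$; a clean way is to use the explicit local model $y^d=(z-x_k)(z-x_{k+1})$ and write down the lifted isotopy directly.

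\textbf{Part (b).} Granting (a), $(\tkd)_*$ is an automorphism of $H_1(\Sigma_n^d;\Z)$ preserving the intersection form, and it is the identity outside a neighborhood of $\Gamma_k$; hence $(\tkd)_*(a)-a$ lies in the span of the classes $[a^k_i]$, $i=1,\dots,d-1$ (the classes of cycles supported in that neighborhood). Write $(\tkd)_*(a)=a-\sum_i c_i(a)[a^k_i]$. Since $\tkd$ takes $\g^k_i$ to $(\g^k_{i+1})^{-1}$ (by the last sentence before the theorem, from the description of $\tau_d$) and is the identity on all other arcs $\g^\ell_j$, $\ell\neq k$, I can compute $c_i(a)$ by pairing with the relative classes $[\g^k_j]$ using the intersection product $(\ ,\ )\colon H_1(\Sigma_n^d)\times H_1(\Sigma_n^d,\P_n)\to\Z$, which is compatible with $(\tkd)_*$ because $\tkd$ fixes $\P_n$. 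One gets $(\tkd_* a,[\g^k_j]) = (a,(\tkd)^{-1}_*[\g^k_j])$; computing how $(\tkd)^{-1}_*$ acts on the $[\g^k_j]$ from $\tkd(\g^k_i)=(\g^k_{i+1})^{-1}$, and inverting the resulting linear system against the known pairings \eqref{interproducts}, yields $c_i(a)=(a,\g^k_i)$. This is routine linear algebra once the geometric action on arcs is in hand.

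\textbf{Part (c).} This is purely a check on $H_1$. I would compute the right-hand side $(t_{a^k_1}\cdots t_{a^k_{d-1}})_*$ using the Picard–Lefschetz formula $t_{c*}(a)=a-(a,c)[c]$ for a Dehn twist about a simple closed curve $c$, applied successively to $c=a^k_1,\dots,a^k_{d-1}$. Because the curves $a^k_i$ have intersection pattern $([a^k_i],[a^k_{i\pm1}])=\pm1$ and $([a^k_i],[a^k_j])=0$ otherwise (from \eqref{interproducts}), the composite acting on a class $a$ produces $a-\sum_{i=1}^{d-1}(a,a^k_i)[a^k_i]$ plus correction terms coming from the non-commuting twists; one must verify these corrections reorganize, using the relation $\sum_{i=1}^d[a^k_i]=0$, into exactly $-\sum_{i=1}^{d}(a,\g^k_i)[a^k_i]$, matching \eqref{h1action}. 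The cleanest route is to check equality on the basis $\{[a^k_i]\}_{i=1}^{d-1}$ together with the classes $[a^\ell_j]$, $\ell\neq k$ (on which both sides are easily seen to agree, being supported away from $\Gamma_k$), and to translate the $[\g^k_i]$ pairings into $[a^k_i]$ pairings via $[a^k_i]=[\g^k_i]-[\g^k_{i+1}]$ in $H_1(\Sigma_n^d,\P_n)$; the expected obstacle here is just bookkeeping of signs and the cyclic index convention.
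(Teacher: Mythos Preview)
Your treatment of (a) is essentially the paper's ``direct verification that $t_{k,d}$ projects through $\pi$ into the half-twist $\sigma_k$'', only spelled out in more detail; nothing substantively different there.

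For (b) and (c) you take genuinely different routes. For (b), the paper argues purely geometrically: represent $a$ by a cycle $\tilde a$ transverse to every $\g^k_i$; then from the explicit picture of $\tau_d$ one sees that at each crossing of $\tilde a$ with $\g^k_i$ the twist drags $\tilde a$ once around, inserting a copy of $a^k_i$ with the sign of the crossing --- and \eqref{h1action} falls out immediately. Your algebraic route via $(\tkd{}_* a,[\g^k_j])=(a,(\tkd)^{-1}_*[\g^k_j])$ and inverting a linear system is in principle viable, but the sign-chasing is more delicate than ``routine linear algebra'' suggests: the curves $a^k_i$ and the arcs $\g^k_j$ share endpoints in $\P$, so matching signs in the pairing against the action $\g^k_i\mapsto(\g^k_{i+1})^{-1}$ requires a fixed perturbation convention that the paper's direct picture sidesteps entirely.

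For (c), the paper uses induction on $d$ rather than a basis check: writing $(t_{k,d-1})_*$ via the inductive hypothesis (with last cycle $\tilde a^k_{d-1}=a^k_{d-1}+a^k_d$), it compares with \eqref{h1action} to obtain $(t_{k,d})_*(a)=(t_{a^k_1}\cdots t_{a^k_{d-2}})_*(a)+(a,[a^k_{d-1}])[a^k_d]$, then checks from \eqref{interproducts} that $(t_{a^k_1}\cdots t_{a^k_{d-2}})_*[a^k_{d-1}]=-[a^k_d]$ and concludes. This needs only one new Picard--Lefschetz computation per step, whereas your direct expansion of the full composite must organize all the cross-terms from the non-commuting twists at once.
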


\begin{proof}  Statement (a) follows by direct verification that  $t_{k,d}$ projects 
through $\pi$ into the half-twist  $\sigma_k.$

Let the class of $a$ be represented by a cycle $\tilde{a}$ which intersects transversely
every $\g^k_i.$ For every point $p\in \tilde{a}\cap \g^k_i,$ \ $t_{k,d}$ modifies $\tilde{a}$  
by adding a cycle in the class of $[a^k_i],$ according to the sign of the intersection 
(see \Cref{fig:twistdispari,fig:twistpari}). 
Therefore (b) follows. 

 We prove (c) by induction on $d.$  First, 
 recall that a Dehn twist around a simple curve $c$ acts on the first homology group by 
 $a\to a-(a,[c]) [c].$  

 We have 
 $$(t_{k,d-1})_*(a)=\ a - \sum_{i=1}^{d-2}\ (a,\g^k_i)[a^k_i] - (a,\g^k_{d-1})[\tilde{a}^k_{d-1}]$$
 where $[\tilde{a}^k_{d-1}]=[\g^k_{d-1}(\g^k_1)^{-1}]=[a^k_{d-1}]+[a^k_d].$ Therefore one has
 $$t_{k,d}(a)= t_{k,d-1}(a)+(a,[a^k_{d-1}])[a^k_d]=(t_{a_1}\dots t_{a_{d-2}})_*(a)+(a,[a^k_{d-1}])[a^k_d]$$
where the last equality comes from induction. But from equations \eqref{interproducts} and  $\sum_{i=1}^d [a^k_i]=0$  it easily follows
$$(t_{a_1}\dots t_{a_{d-2}})_*([a^k_{d-1}]) = -[a^k_d]$$
therefore
$$t_{k,d}(a)=(t_{a_1}\dots t_{a_{d-2}})_*(a-(a,[a^k_{d-1}])[a^k_{d-1}])\ =\ 
(t_{a_1}\dots t_{a_{d-1}})_*(a).$$

\end{proof}

Part (a) and (c) of the preceding theorem has been proved independently for $d=3$ in \cite[Thm.~4.1]{KimSong} and for any $d$ in  \cite[Prop.~5.3]{GhaMcLeay}, where the $\frac{1}{d}-twist$ are called ``chain twists''.

\section{Motivations and homology of complex braid groups}
We recall that the classification of irreducible complex  reflection groups is given in \cite{shto54}. 
The group $\W(de,d,n)$ is the group of monomial matrices such that all the non-zero entries are $(de)$-th roots of unity and the product of all the non-zero entries is a $e$-th root of unity. 
In particular in this paper we are interested in the complex reflection groups of type $\W(d,d,n),$ that 
act naturally as complex reflection groups on the space $\C^n$.
The complement of the reflection arrangement associated to the group  $\W(d,d,n)$ is 
$$
\mathcal{M}(d,n):= \{ (z_1, \ldots, z_n) \mid  \forall i <j, a \in \Z, z_i \neq \zeta_d^a z_j  \}.
$$
and the group $\W(d,d,n)$ acts freely on this space (see for example \cite{naka83, brmaro98}).
For $d=2$ the group $\W(d,d,n)$ is the Coxeter reflection group of type $D_n$.

\begin{thm}\label{thm:Bddn}
The space $\totsp^d_n$ is homotopy equivalent to the regular orbit space of the complex reflection group of type $\W(d,d,n)$. Hence it is a classifying space for the complex braid group of type $\B(d,d,n)$.
\end{thm}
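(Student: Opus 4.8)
The plan is to identify the total space $\totsp^d_n$ with the complement $\mathcal{M}(d,n)$ of the reflection arrangement for $\W(d,d,n)$, up to homotopy, and then invoke the fact that $\W(d,d,n)$ acts freely on $\mathcal{M}(d,n)$ so that $\mathcal{M}(d,n)/\W(d,d,n)$ is a $K(\B(d,d,n),1)$ by definition of the complex braid group. So the content is really in producing the homotopy equivalence $\totsp^d_n \simeq \mathcal{M}(d,n)/\W(d,d,n)$.

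First I would set up the correspondence on the level of unordered configurations. A point of $\mathcal{M}(d,n)/\W(d,d,n)$ records an $n$-tuple $(z_1,\dots,z_n)$ with $z_i \neq \zeta_d^a z_j$, modulo the monomial action; equivalently it records the unordered multiset $\{z_1^d, \dots, z_n^d\}$ of $n$ distinct nonzero complex numbers together with a choice of $d$-th root data, but in a way compatible with the $\W(d,d,n)$ constraint (product of nonzero entries an honest, not merely $e$-th, root — here $e=1$). The key observation is that sending $(z_1,\dots,z_n)$ to the pair consisting of the configuration $\P=\{z_1^d,\dots,z_n^d\}\in\conf_n$ (after rescaling into the disk) and the value $y = z_1 z_2\cdots z_n$ gives exactly a point $(\P, z, y)$ lying over the curve $y^d = \prod(z - x_i)$ evaluated at a suitable basepoint; more precisely one checks that $\mathcal{M}(d,n)/\W(d,d,n)$ fibres over $\conf_n$ with fibre the affine superelliptic curve minus its ramification points, and this matches $\totsp^d_n$ with the ramification locus handled correctly. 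I would phrase this as an explicit map in both directions and check they are mutually inverse homeomorphisms (or at least homotopy inverses), using that $\{z : z^d = w\}$ for $w\ne 0$ is a free $\mu_d$-torsor.

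Concretely, the cleanest route is: consider the map $\Phi: \mathcal{M}(d,n) \to \conf_n \times \disk \times \C$ sending $(z_1,\dots,z_n)\mapsto (\P, z, y)$ where one fixes $z$ to be, say, $0$ or a generic basepoint and reads off $\P$ from the $d$-th powers and $y$ from the product, after an overall rescaling diffeomorphism $\C \to \disk$; show $\Phi$ descends to $\mathcal{M}(d,n)/\W(d,d,n)$, lands in a deformation retract of $\totsp^d_n$, and is a homeomorphism onto it. The $\W(d,d,n)$-equivariance amounts to: permuting and multiplying the $z_i$ by $d$-th roots with product $1$ does not change the multiset of $d$-th powers nor the product $y$, and conversely any two preimages differ by such an operation — this is where the precise shape of the reflection group (product of entries $=1$, i.e. $e=1$) is used, and it is exactly why the space is $\totsp^d_n$ rather than some intermediate cover. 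One then notes $\totsp^d_n$ deformation retracts onto this slice because the $\disk$-coordinate $z$ in the definition of $\totsp^d_n$ is contractible and the bundle $p: \totsp^d_n \to \conf_n\times\disk$ can be trivialized over $\conf_n\times\{*\}$, or more simply because $\conf_n\times\disk$ deformation retracts onto $\conf_n$ and the covering $\totsp^d_n$ follows along.

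The main obstacle I expect is bookkeeping the ramification points correctly: the complex reflection arrangement complement $\mathcal{M}(d,n)$ is an \emph{open} manifold with no ramification, whereas $\totsp^d_n$ as defined includes the ramification points $\P \subset \surf^d_n$ (where $y=0$). So the honest statement must be that $\mathcal{M}(d,n)/\W(d,d,n)$ is homotopy equivalent to $\totsp^d_n$, not homeomorphic — indeed $\totsp^d_n$ deformation retracts onto $\confmd{n}^d = \totsp^d_n \setminus p^{-1}(\confm{n-1})$, the $d$-fold cover of $\confm{n}$ introduced in the General Setting section, and it is \emph{that} space which is literally $\mathcal{M}(d,n)/\W(d,d,n)$. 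I would therefore prove: (i) $\confmd{n}^d \cong \mathcal{M}(d,n)/\W(d,d,n)$ via the explicit coordinate map above; (ii) the inclusion $\confmd{n}^d \hookrightarrow \totsp^d_n$ is a homotopy equivalence, because removing the fibrewise-ramification divisor $p^{-1}(\confm{n-1})$ — a codimension-one subvariety which fibrewise is a finite set of points $\P$ in the surface $\surf^d_n$ — deformation retracts back by pushing $y$ radially away from $0$ in each sheet, a construction one can do continuously in families over $\conf_n$; combining (i) and (ii) with the definition of $\B(d,d,n)$ as $\pi_1$ of the regular orbit space finishes the proof. The delicate point in (ii) is making the radial push-off canonical and continuous over all of $\conf_n$ simultaneously, which can be done by choosing, fibrewise, a tubular neighbourhood of $\P$ in $\surf^d_n$ varying smoothly with $\P$ and retracting its complement-of-center onto the boundary sphere bundle — standard but requiring a word of care about the monodromy permuting the points of $\P$.
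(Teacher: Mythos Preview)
Your core idea---sending $(z_1,\dots,z_n)$ to the pair $(\{z_1^d,\dots,z_n^d\},\,z_1\cdots z_n)$ and checking $\W(d,d,n)$-invariance---is exactly the paper's map $\Xi$. But the execution in your final paragraph contains a genuine error.

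Claim (ii) is false: the inclusion $\confmd{n}^d \hookrightarrow \totsp_n^d$ is \emph{not} a homotopy equivalence. Over each $\P\in\conf_n$ this is the inclusion $\surf_n^d\setminus\P\hookrightarrow\surf_n^d$, and removing $n$ points from a surface changes its homotopy type: the paper itself records $b_1(\surf_n^d)=(n-1)(d-1)$ while $b_1(\ddiskP^d)=d(n-1)+1$. The ramification locus has \emph{real} codimension~$2$ in $\totsp_n^d$, so there is no deformation retraction ``pushing $y$ radially away from $0$''---that would be like retracting $\C$ onto $\C^*$.

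The source of the confusion is claim (i). The reflecting hyperplanes for $\W(d,d,n)$ are $z_i=\zeta_d^a z_j$; they do \emph{not} include the coordinate hyperplanes $z_i=0$ (those appear only for $\W(d,1,n)$). Hence $\mathcal{M}(d,n)$ contains points with one $z_i=0$, and under your map these land precisely on ramification points ($0\in\P$, $y=0$). So $\mathcal{M}(d,n)/\W(d,d,n)$ already matches a model of $\totsp_n^d$ \emph{with} its ramification locus, and no separate step (ii) is needed. The paper makes this precise by first retracting $\totsp_n^d$ (with $\disk$ replaced by $\C$) onto the slice $\bar\totsp_n^d=\{(\P,y):y^d=(-1)^n\prod x_i\}$ at $z=0$ via the translation $(\P,z,y)\mapsto(\P-z,y)$, and then checking that $\Xi(z_1,\dots,z_n)=(\{z_i^d\},\,\zeta_{2d}^n\prod z_i)$ descends to a homeomorphism $\mathcal{M}(d,n)/\W(d,d,n)\to\bar\totsp_n^d$.
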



Recall that for $d=2$ we have that the group $\B(2,2,n)$ is the Artin group $\Gdn$. In this case the monodromy action of the braid group $\Br_n$ on the fundamental group of the fiber $\surf_n^d$  is described in \cite{cp05} (see also \cite{per_van_96}).

\begin{proof}
For this proof it will be convenient to replace the space $\totsp_n^d$ with the homotopy equivalent space 
$$\tilde\totsp_n^d:=\{(P, z, y) \in \mathrm{Conf}_n(\C) \times \C \times \C \mid y^d  = (z - x_1) \cdots (z-x_n)\}.$$
Then we define the space
$\bar \totsp^d_n$ that can be identified with the subset of $\tilde\totsp^d_n$ given by the points $(\P, z,y )$ with $z=0$. 
We set
$$
\bar \totsp^d_n := \{(\P, y ) \in \mathrm{Conf}_n(\C)  \times \C| y^d = (-1)^n x_1\cdots x_n \}
$$
where, as usual, we write $P$ for the unordered configuration of points $\{x_1, \ldots, x_n\} \subset \C$.
Next we can define a retraction 
which is an homotopy equivalence:
let $\rho: \tilde\totsp^d_n \to \bar \totsp^d_n$ be the map
$$
(\P, z,y ) \mapsto \left(\P-z,y \right)
$$ 
where, given the set $\P = \{x_1, \ldots, x_n\} \subset \C$, we write $\P-z$ for the set $\{x_1 - z, \ldots, x_n - z\}$.
Clearly $\rho$ is an homotopy equivalence since its fiber is contractible. 

Finally we consider the regular orbit space of the complex reflection group $\W(d,d,n)$. 
We can define a continuous map 
$$
\Xi:\mathcal{M}(d,n)\to \bar \totsp^d_n
$$
by
$$
\Xi:(z_1, \ldots, z_n ) \mapsto (\{z_1^d, \ldots, z_n^d\},\zeta_{2d}^n z_1\cdots z_n)
$$
where $\zeta_{2d}$ is a $2d$-th primitive rooth of unity.
It is easy to verify that the map $\Xi$ is $\W(d,d,n)$-invariant, hence it induces a map on the quotient space
$$
\bar\Xi:\mathcal{M}(d,n) / \W(d,d,n) \to \bar \totsp^d_n.
$$
It is straightforward to check that the $\Xi$-fiber of a point $X \in \bar \totsp^d_n$ is the $\W(d,d,n)$-orbit of a point $\widetilde{X}$ in the counter-image of $X$. This implies that  the map $\bar\Xi$ is a bijection.

Moreover, since the map $\bar\Xi$ is bijective, proper 
and closed, 
it induces an homeomorphism between the two spaces (see for example \cite[Prop.~1.11]{loo84} for a detailed argument).

From the fibration
$$
\Sigma_n^d \into \totsp_n^d \to \conf_n
$$
where the left and the right term are $k(\pi,1)$ spaces, we obtain that $\totsp_n^d$ is a $k(\pi,1)$ space for $\pi = \W(d,d,n)$.
\end{proof}

Since the surface $\surf_n^d$ is a $k(\pi,1)$ space, our construction gives a short proof of the result in \cite{naka83} for the complex arrangements of type $G(d,d,n)$:
\begin{cor}
The space $\mathcal{M}(d,n)$ is a $k(\pi,1)$.
\end{cor}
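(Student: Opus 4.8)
The plan is to derive the corollary directly from Theorem~\ref{thm:Bddn} and the fact, recalled above, that $\W(d,d,n)$ acts freely on $\mathcal{M}(d,n)$. Since $\W(d,d,n)$ is a finite group acting freely on the Hausdorff complex manifold $\mathcal{M}(d,n)$, the action is properly discontinuous, so the quotient map $q\colon \mathcal{M}(d,n)\to \mathcal{M}(d,n)/\W(d,d,n)$ is a regular covering with deck transformation group $\W(d,d,n)$.

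First I would use the homeomorphism $\bar\Xi$ of Theorem~\ref{thm:Bddn} to identify the base $\mathcal{M}(d,n)/\W(d,d,n)$ with $\bar\totsp^d_n$, which is homotopy equivalent to $\totsp^d_n$ and hence, again by that theorem, is a $k(\pi,1)$ with $\pi=\B(d,d,n)$. Then, since a covering map induces isomorphisms on all higher homotopy groups, one gets $\pi_i(\mathcal{M}(d,n))\cong \pi_i\bigl(\mathcal{M}(d,n)/\W(d,d,n)\bigr)=0$ for every $i\geq 2$; thus $\mathcal{M}(d,n)$ is aspherical. This proves that $\mathcal{M}(d,n)$ is a $k(\pi',1)$, where $\pi'=\pi_1(\mathcal{M}(d,n))$ is the pure complex braid group of type $\B(d,d,n)$, i.e.\ the kernel of the natural surjection $\B(d,d,n)\twoheadrightarrow \W(d,d,n)$; in particular this recovers the result of \cite{naka83}.

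I do not expect a real obstacle here: the only points that need a word are that $q$ is genuinely a covering (immediate from freeness and finiteness of the action on a manifold) and the standard fact that covering spaces of aspherical spaces are aspherical. An equivalent route would be to observe that $\mathcal{M}(d,n)$ is the total space of the pullback of the fibre bundle $\surf^d_n\into \totsp^d_n\to \conf_n$ along a suitable map whose base is a $k(\pi,1)$, but the covering-space argument above is the shortest.
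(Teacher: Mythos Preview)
Your argument is correct and coincides with the paper's approach: the paper does not write a separate proof for the corollary but simply notes (in the sentence preceding it) that, since $\surf_n^d$ is a $k(\pi,1)$, the construction in Theorem~\ref{thm:Bddn} yields the result; your proposal just makes explicit the covering-space step that the paper leaves implicit, namely that the free action of the finite group $\W(d,d,n)$ on $\mathcal{M}(d,n)$ makes the quotient map a covering over the aspherical space $\bar\totsp^d_n\simeq\totsp^d_n$.
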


\begin{rem}
	Let us consider the subset of $\C^n$
$$
\mathcal{M}_0(d,n):= \{ (z_1, \ldots, z_n) \mid  \forall i, z_i \neq 0, \forall i <j, a \in \Z, z_i \neq \zeta_d^a z_j  \}.
$$
with the natural free action of the complex reflection group $\W(d,1,n)$.
Since the map $\Xi_0: \mathcal{M}_0(d,n) / \W(d,1,n) \to \mathrm{Conf}_n(\C^*)$ is a covering, we obtain that $\mathcal{M}_0(d,n)$ is a $k(\pi,1)$ space as already proved in \cite{naka83}.
\end{rem}

\section{Homology of some Artin groups}\label{sec:homol_artin}

We use the notations and the technical results collected in \cite[\S~3]{cal_sal_superelliptic} for the case $d=2.$ 
We indicate here all the generalizations that we need for general $d.$

From the description given in \cite[Thm.~4.5, Thm.~4.12, Rmk.~4.13]{calmar} we have the following results.

\begin{prop}\label{prop:homol_p2}
Let $\F$ be a field of characteristic $p$.
For $p$ an odd prime and $n$ odd the  $\F[t]$-module $\oplus_{i,n} H_i(\Gb{n}; \F[t^\pmu])$ has a basis given by
\begin{equation}\label{generators_p1}
\frac{\DB(z_{2m+1}h^{r-1} y_{j_1} \cdots y_{j_l }x_{i_1} \cdots x_{i_k})}{(1+t)}.
\end{equation}
where $r>0$, $j_1 \leq \cdots \leq  j_l$, $ i_1 < \cdots < i_k$, these  generators have
torsion of order $(1+t)$.
\end{prop}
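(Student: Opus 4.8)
The plan is to treat Proposition~\ref{prop:homol_p2} as a transcription of \cite[\S 4]{calmar} into our normalization, so the work splits into three parts: (i) fixing the algebraic complex that computes $H_*(\Gb{n};\F[t^{\pmu}])$, (ii) importing the explicit answer of \emph{loc.\ cit.}, and (iii) reconciling the two bookkeeping conventions.

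First I would recall that $\confm{n}$ is a $K(\Gb{n},1)$ and is homotopy equivalent to the Salvetti complex of the Artin system of type $B_n$; hence $H_*(\Gb{n};\F[t^{\pmu}])$ is the homology of the associated algebraic chain complex $C_*$, a finite free $\F[t^{\pmu}]$-module with one generator per subset of simple reflections spanning a finite parabolic subgroup and with the usual weighted boundary (the special $B$-node --- geometrically the loop of the marked point of $\confm{n}$ around one puncture --- contributes the weight $t$, the remaining nodes the weight $-1$, as in \cite{calmar}). After the reduction to the smaller complex carried out there, $C_*$ may be replaced by the complex whose homology is packaged by the polynomial algebra on the variables $x_i,y_j,z_m,h$ together with the differential $\DB$ figuring in \eqref{generators_p1}.

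Next I would quote \cite[Thm.~4.5, Thm.~4.12, Rmk.~4.13]{calmar}: over a field $\F$ of odd characteristic $p$ and for $n$ odd, the homology of this reduced complex is, in each bidegree, an $\F$-vector space --- equivalently an $\F[t]$-module annihilated by $(1+t)$ --- with basis the classes $\DB(z_{2m+1}h^{r-1}y_{j_1}\cdots y_{j_l}x_{i_1}\cdots x_{i_k})/(1+t)$ subject to $r>0$, $j_1\le\cdots\le j_l$, $i_1<\cdots<i_k$. Summing over all internal degrees and over $n$ then yields exactly the asserted basis of $\bigoplus_{i,n}H_i(\Gb{n};\F[t^{\pmu}])$, and the fact that $(1+t)$ kills every such class is precisely the torsion statement.

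The only real content is the dictionary between \cite{calmar} and the present setup, and that is where care is needed: one must identify which Artin generator carries the parameter $t$, fix signs and the internal bigrading so that homological and internal degrees line up with \eqref{generators_p1}, and match \cite{calmar}'s (co)homological indexing to ours --- the last point being harmless here, since every module in play is $(1+t)$-torsion over the PID $\F[t^{\pmu}]$, so universal coefficients contributes no $\Tor$ or $\Ext$ correction. I would also point out that the hypotheses ``$p$ odd'' and ``$n$ odd'' are exactly what eliminate the auxiliary $(1-t)$-torsion summands and the free parts that appear for $n$ even or at the prime $2$, leaving only the generators listed above. This reconciliation of conventions, rather than any fresh homological computation, is the step I expect to be the main obstacle.
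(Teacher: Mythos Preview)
Your proposal is correct and matches the paper's own treatment exactly: the paper does not give an independent proof but simply records this proposition as a direct consequence of \cite[Thm.~4.5, Thm.~4.12, Rmk.~4.13]{calmar}, which is precisely what you do. Your additional care about reconciling conventions (the $t$-weight on the special node, the (co)homological indexing, and why ``$p$ odd, $n$ odd'' kills the $(1-t)$-torsion and free summands) is if anything more explicit than the paper, which leaves all of this to the reader familiar with \cite{calmar}.
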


We can compute the homology groups   $H_*(\Gb{n}; \F_p[t]/(1-(-t)^d))$ 
by using the explicit description of \cite[\S~4.3, 4.4, 4.5 and 4.6]{calmar}.
As a special case of \cite[Prop.~4.7, 4.14]{calmar} we have 
the isomorphism
\begin{equation}\label{eq:decomposizione}
H_i(\Gb{n}; \F_p[t]/(1-(-t)^d)) = h_{i}(n,p) \oplus h_{i}'(n,p)
\end{equation}
where the two summands are determined by the following exact sequence:
$$
0 \to h_{i+1}'(n,p) \to H_i(\Gb{n}; \F_p[t^{\pmu}]) \stackrel{(1-(-t)^d)}{\longrightarrow} H_i(\Gb{n}; \F_p[t^{\pmu}]) \to h_{i}(n,p) \to 0.
$$
For odd $n$ all the elements of $H_i(\Gb{n}; \F_p[t^{\pmu}])$ are multiple of $x_0$ for $p=2$ and multiple of $h$ for odd $p$; hence they have $(1+t)$-torsion (see Proposition \ref{prop:homol_p2}). This implies that  the multiplication by 
$(1-(-t)^d)$ is the zero map and the generators of  $h_{i}'(n,p)$ and $h_{i}(n,p)$ are  in bijection with a set of generators of $H_i(\Gb{n}; \F_p[t^{\pmu}])$.

As in  \cite[\S~3, prop. 3.5]{cal_sal_superelliptic} for $d=2,$  we have :
\begin{prop} \label{prop:generators}
For odd $n$ the homology $H_*(\Gb{n}; \F_2[t]/(1-(-t)^d))$ is generated, as an $\F_2[t]$-module, by the classes of the form 
\begin{equation}\label{generators1}
\gp (z_c, x_0 x_{i_1} \cdots x_{i_k}) := \frac{1-(-t)^d}{1+t} z_{c+1} x_{i_1} \cdots x_{i_k}.
\end{equation}
that correspond to the generators of $h_{i}'(n,2)$
and
\begin{equation}\label{generators2}
\gs(z_c, x_0 x_{i_1} \cdots x_{i_k}):=\frac{\DB(z_{c+1}x_{i_1} \cdots x_{i_k})}{(1+t)}.
\end{equation}
that correspond to generators of $h_{i}(n,2)$. Here $0 \leq i_1 \leq \cdots i_k$, $c$ is even and both kind of generators have
torsion of order $(1+t)$.

For odd $n$ and $p$ an odd prime the homology $H_*(\Gb{n}; \F_p[t]/(1-(-t)^d))$ is generated, as an $\F_p[t]$-module, by the classes of the form 
\begin{equation}\label{generators1p}
	\gp (z_c, h^{r} y_{j_1} \cdots y_{j_l } x_{i_1} \cdots x_{i_k}) := \frac{1-(-t)^d}{1+t}  z_{c+1} h^{r-1} y_{j_1} \cdots y_{j_l }x_{i_1} \cdots x_{i_k}.
\end{equation}
that correspond to the generators of $h_{i}'(n,p)$
and
\begin{equation}\label{generators2p}
	\gs(z_c, h^{r} y_{j_1} \cdots y_{j_l } x_{i_1} \cdots x_{i_k}):=\frac{\DB(z_{c+1}h^{r-1} y_{j_1} \cdots y_{j_l }x_{i_1} \cdots x_{i_k})}{(1+t)}.
\end{equation}
that correspond to generators of $h_{i}(n,p)$. Here $0<r$, $j_1 \leq \cdots \leq  j_l$, $ i_1 < \cdots < i_k$, $c$ is even and both kind of generators have
torsion of order $(1+t)$.
\end{prop}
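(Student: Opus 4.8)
The plan is to derive the statement from the decomposition \eqref{eq:decomposizione}, combining it with the explicit basis of $H_*(\Gb{n};\F_p[t^\pmu])$ recalled in Proposition~\ref{prop:homol_p2} (and its $p=2$ counterpart from \cite{calmar}), following the template of \cite[\S~3, Prop.~3.5]{cal_sal_superelliptic}. First I would start from the short exact sequence of $\F_p[t^\pmu]$-modules
$$0\to \F_p[t^\pmu]\xrightarrow{\;\cdot(1-(-t)^d)\;}\F_p[t^\pmu]\to \F_p[t]/(1-(-t)^d)\to 0,$$
which makes sense because $1-(-t)^d$ is not a zero-divisor and has nonzero constant term (so that $\F_p[t]/(1-(-t)^d)=\F_p[t^\pmu]/(1-(-t)^d)$). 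Its long exact sequence in $H_*(\Gb{n};-)$ is precisely the four-term sequence of \eqref{eq:decomposizione}, presenting $h_i(n,p)$ as the cokernel and $h_{i+1}'(n,p)$ as the kernel of multiplication by $1-(-t)^d$ on $H_i(\Gb{n};\F_p[t^\pmu])$.

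Next I would exploit the observation already recorded before the statement: for odd $n$ every element of $H_i(\Gb{n};\F_p[t^\pmu])$ is a multiple of $x_0$ (when $p=2$) or of $h$ (when $p$ is odd), hence has $(1+t)$-torsion by Proposition~\ref{prop:homol_p2} and its $p=2$ analogue. Since $(1+t)\mid(1-(-t)^d)$ in $\F_p[t]$ (evaluate at $t=-1$), multiplication by $1-(-t)^d$ is the zero endomorphism of $H_i(\Gb{n};\F_p[t^\pmu])$. The four-term sequence therefore collapses, so the coefficient projection $q_*$ and the connecting homomorphism $\delta$ furnish isomorphisms $h_i(n,p)\cong H_i(\Gb{n};\F_p[t^\pmu])\cong h_{i+1}'(n,p)$. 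A generating set of $H_*(\Gb{n};\F_p[t]/(1-(-t)^d))$ over $\F_p[t]$ is then obtained by pushing forward, via $q_*$ and via $\delta^{-1}$, the basis of $H_*(\Gb{n};\F_p[t^\pmu])$ consisting of the classes $\DB(z_{2m+1}h^{r-1}y_{j_1}\cdots y_{j_l}x_{i_1}\cdots x_{i_k})/(1+t)$ (for $p$ odd) and the corresponding classes with $h^{r-1}$ replaced by $x_0$ (for $p=2$).

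The remaining work is the chain-level identification of these transported generators with the stated classes. Writing a typical basis element of $H_i(\Gb{n};\F_p[t^\pmu])$ as $\DB(w)/(1+t)$ with $c$ even and $w=z_{c+1}x_{i_1}\cdots x_{i_k}$ (respectively $w=z_{c+1}h^{r-1}y_{j_1}\cdots y_{j_l}x_{i_1}\cdots x_{i_k}$), its image under $q_*$ is the class $\gs(z_c,\dots)$ of \eqref{generators2} (resp.\ \eqref{generators2p}); so these generate the $h_i(n,p)$-summand. For the $h_{i+1}'$-summand I would compute $\delta$ on $\gp(z_c,\dots)=\frac{1-(-t)^d}{1+t}\,w$: first checking it is a cycle over $\F_p[t]/(1-(-t)^d)$, then lifting it to the same expression over $\F_p[t^\pmu]$, whose differential is $\frac{1-(-t)^d}{1+t}\DB(w)=(1-(-t)^d)\cdot\frac{\DB(w)}{1+t}$, a multiple of $1-(-t)^d$; hence $\delta\bigl(\gp(z_c,\dots)\bigr)$ is the class of $\DB(w)/(1+t)$, a basis element of $H_i(\Gb{n};\F_p[t^\pmu])$. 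Thus the classes $\gp(z_c,\dots)$ of \eqref{generators1} (resp.\ \eqref{generators1p}) generate the $h_{i+1}'$-summand. Finally, $\F_p[t]$-linearity of $q_*$ and $\delta$ transports the $(1+t)$-torsion of the source classes to $\gs$ and $\gp$, and collecting both families over all admissible indices $r>0$, $j_1\leq\cdots\leq j_l$, $i_1<\cdots<i_k$, $c$ even, yields the asserted generating set.

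The hard part will not be the homological algebra but this last step: matching the explicit cycles of the algebraic complexes of \cite{calmar} with the normalizing factors $1/(1+t)$ and $(1-(-t)^d)/(1+t)$, so that the generators of the two summands $h_*$ and $h_*'$ of \eqref{eq:decomposizione} are produced with neither loss nor repetition, and importing the $p=2$ version of Proposition~\ref{prop:homol_p2} (with $x_0$ in place of the powers of $h$) precisely enough to name its basis. The skeleton — coefficient sequence, vanishing of multiplication by $1-(-t)^d$, reading off kernel and cokernel — is immediate once the $(1+t)$-torsion statement of Proposition~\ref{prop:homol_p2} is in hand.
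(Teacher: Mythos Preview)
Your proposal is correct and follows exactly the route the paper intends: the paragraph immediately preceding the proposition already records the key observation that for odd $n$ every class in $H_i(\Gb{n};\F_p[t^\pmu])$ has $(1+t)$-torsion, so multiplication by $1-(-t)^d$ vanishes and the generators of $h_i(n,p)$ and $h_i'(n,p)$ are in bijection with those of $H_i(\Gb{n};\F_p[t^\pmu])$; the proposition is then stated without further argument, referring to \cite[\S~3, Prop.~3.5]{cal_sal_superelliptic}. Your explicit identification of the transported generators via $q_*$ and the connecting homomorphism $\delta$ supplies precisely the chain-level detail the paper leaves implicit.
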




Generalizing the sets introduced in \cite[\S 3]{cal_sal_superelliptic}, we provide sets of elements $\cB'$, $\cB''$ of the $\Z$-modules $H_i(\confm{n};\Z)\simeq H_i(\Gb{n}; \Z[t]/(1+t))$ and $H_i(\confmd{n}^d;\Z)\simeq H_i(\Gb{n}; \Z[t]/(1-(-t)^d))$ for $n$ odd, such that the following two condition are satisfied:
\begin{enumerate}[label=(\roman*), wide=0pt]
	\item $\cB'$ (resp.~$\cB''$) induces a base of the homology of $H_i(\confm{n};\Q) $ (resp.~$H_i(\confmd{n}^d;\Q)$);
	\item the images of the elements of $\cB'$ (resp.~$\cB''$) in $H_i(\confm{n};\Z_p)$ (resp.~$H_i(\confmd{n}^d;\Z_p)$) are
	linearly independent for any prime $p$.
\end{enumerate}
\begin{df}\label{def:BB}
Let $n$ be an odd integer. We define the sets $\cB' \subset H_i(\confm{n};\Z)$ (for $d=1$) and $\cB'' \subset H_i(\confmd{n}^d;\Z)$ (for $d>1$) given by the following elements:
$$ 
\omega_{2i,j,0}^{(d)}:= \frac{\DB(z_{2i+1}x_0^{j-1})}{(1+t)} \mbox{ and } \widetilde{\omega}_{2i,j,0}^{(d)}:= \frac{(1-(-t)^d)z_{2i+1}x_0^{j-1}}{(1+t)} \qquad \mbox{ for }j>0;
$$
and
$$
\omega_{2i,j,1}^{(d)}:=\frac{\DB(z_{2i+1}x_0^{j-1}x_1)}{(1+t)} \mbox{ and } \widetilde{\omega}_{2i,j,1}^{(d)}:=\frac{(1-(-t)^d)z_{2i+1}x_0^{j-1}x_1}{(1+t)} \qquad \mbox{ for }j>0.
$$
\end{df}
The  elements above give a basis for $H_*(\confm{n};\Q) $ (resp.~$H_*(\confmd{n}^d;\Q)$) for $n$ odd (condition (i)) (see \cite[\S4.2]{calmar}). 

For condition (ii), one verifies that the elements in $\cB'$ and $\cB''$ define, mod $2$, a subset of the bases of  $H_i(\Gb{n}; \Z_2[t]/(1+t))$ and $H_i(\Gb{n}; \Z_2[t]/(1-(-t)^d))$ given in \cite[\S4.4]{calmar} and, mod $p$ for an odd prime, a subset of the bases of $H_i(\Gb{n}; \Z_p[t]/(1+t))$ and $H_i(\Gb{n}; \Z_p[t]/(1-(-t)^d))$  given in \cite[\S 4.6]{calmar}.

So we have:
\begin{prop}\label{prop:BB}
For $n$ odd the elements of $\cB'$ (resp.~$\cB''$) are a free set of generators of a maximal free $\Z$-submodule of $H_i(\confm{n};\Z)$ (resp.~$H_i(\confmd{n}^d;\Z)$).
\end{prop}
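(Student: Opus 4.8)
The plan is to deduce the proposition formally from properties (i) and (ii) recorded just above, using only elementary module theory over the PID $\Z$. Write $H:=H_i(\confm{n};\Z)$ (a finitely generated abelian group) and let $F\subseteq H$ be the $\Z$-submodule spanned by the displayed elements of $\cB'$; the argument for $\cB''\subseteq H_i(\confmd{n}^d;\Z)$ is verbatim the same. Denote by $T\subseteq H$ the torsion subgroup. Recall that a maximal free $\Z$-submodule is a free submodule not properly contained in a larger free one, and that this is equivalent to being a complement of $T$, i.e. to $H=F\oplus T$. So it suffices to show: $F$ is free, $F\cap T=0$ with $H/F$ torsion, and $F$ is saturated (pure) in $H$.

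First I would invoke condition (i): since the images of the elements of $\cB'$ form a $\Q$-basis of $H\otimes\Q=H_i(\confm{n};\Q)$, they are in particular $\Z$-linearly independent, so $F$ is free of rank $r=\dim_\Q H_i(\confm{n};\Q)=\operatorname{rk}_\Z H$. Being free, $F$ is torsion-free, so $F\cap T=0$, the composite $F\hookrightarrow H\to H/T$ is injective, and since $\operatorname{rk}F=\operatorname{rk}(H/T)=r$ the quotient $H/F$ is finite. Next I would use condition (ii) to see that $F$ is pure: by the universal coefficient theorem the natural map $H\otimes\F_p\to H_i(\confm{n};\F_p)$ is injective, so the $\F_p$-linear independence of the images of $\cB'$ in $H_i(\confm{n};\F_p)$ forces independence of their images in $H\otimes\F_p$; that is, $F\otimes\F_p\to H\otimes\F_p$ is injective, which says exactly $pH\cap F=pF$, and this holds for every prime $p$.

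Finally I would conclude with the standard fact that a full-rank submodule of a finitely generated $\Z$-module which is saturated at every prime is a direct complement of the torsion subgroup. Concretely, localize at each prime $p$: the sequence $0\to F_{(p)}\to H_{(p)}\to (H/F)_{(p)}\to 0$ is pure-exact over the DVR $\Z_{(p)}$, its cokernel $(H/F)_{(p)}$ is a finite direct sum of cyclic $\Z_{(p)}$-modules, and pure extensions of cyclic modules split (lift a generator $\bar x$ with $p^b\bar x=0$ to $x-a$ of order dividing $p^b$ using $p^bx\in p^bF_{(p)}$), so each localized sequence splits; as $H/F$ is finite, $\Ext^1_\Z(H/F,F)=\bigoplus_p\Ext^1_{\Z_{(p)}}((H/F)_{(p)},F_{(p)})$, so the extension class of $0\to F\to H\to H/F\to 0$ vanishes and $H=F\oplus C$ with $C\cong H/F$ finite; since $F$ is torsion-free, $C=T$, i.e. $H=F\oplus T$. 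Hence $F$ is a maximal free $\Z$-submodule with the elements of $\cB'$ as a free basis, and likewise for $\cB''$. I do not expect a genuine obstacle in this last proposition: all the real work lies in establishing (i) and (ii), already carried out above by comparison with the explicit homology bases of \cite{calmar}; the only delicate point is to read condition (ii) inside $H\otimes\F_p$ rather than inside $H_i(\confm{n};\F_p)$, which is precisely what the injectivity in the universal coefficient sequence supplies.
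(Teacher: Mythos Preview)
Your proof is correct and follows the paper's approach: the paper offers no separate proof of the proposition, treating it as an immediate consequence of conditions (i) and (ii) established just before, and you simply make that deduction explicit with the standard module-theoretic argument.

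One small caveat worth fixing: the equivalence you ``recall'' between ``maximal free submodule'' and ``complement of the torsion subgroup'' is false in general. For instance, in $H=\Z\oplus\Z/4$ the subgroup $\langle(2,1)\rangle$ is free of rank~$1$ and is not contained in any strictly larger free subgroup (any such would be cyclic, generated by some $(a,b)$ with $(2,1)=n(a,b)$, forcing $n=\pm1$), yet $\langle(2,1)\rangle+T=2\Z\oplus\Z/4\neq H$. Only the implication you actually use---that $H=F\oplus T$ forces $F$ to be maximal free---is valid, and since you do prove $H=F\oplus T$ from (i) and (ii), your argument goes through unchanged.
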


\begin{prop}\label{prop:hrazionale_minus_t}
Let $n$ be an even integer. Let $\F$ be a field of characteristic $0$.
The Poincar\'e polynomial of 
$H_*(\Gb{n}; \F[t]/([d]_{(-t)}))$ 
is trivial for $d$ odd ($d>1$) and is $(1+q)q^{n-1}
$ for $d$ even. For $d$ even a basis of the homology is given by the following generators
$$\frac{\DB (z_{n})}{1-t},  z_{n}.$$
\end{prop}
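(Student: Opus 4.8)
The plan is to compare the coefficient module $\F[t]/([d]_{(-t)})$ with the Laurent module $\F[t^{\pmu}]$ and then to feed in the rational computation of $H_*(\Gb{n};\F[t^{\pmu}])$ from \cite{calmar}. First I would record the elementary algebra: $[d]_{(-t)}=\sum_{j=0}^{d-1}(-t)^{j}=\frac{1-(-t)^{d}}{1+t}$ has invertible constant term $1$, so $\F[t^{\pmu}]/([d]_{(-t)})=\F[t]/([d]_{(-t)})$, and $[d]_{(-t)}$ is a non--zero--divisor in $\F[t^{\pmu}]$. Hence there is a short exact sequence of $\Gb{n}$--modules
$$0\longrightarrow \F[t^{\pmu}]\xrightarrow{\ \cdot\,[d]_{(-t)}\ } \F[t^{\pmu}]\longrightarrow \F[t]/([d]_{(-t)})\longrightarrow 0,$$
whose long exact sequence in homology gives, for each $i$, a short exact sequence
$$0\to \coker([d]_{(-t)}\text{ on }H_{i}(\Gb{n};\F[t^{\pmu}]))\to H_{i}(\Gb{n};\F[t]/([d]_{(-t)}))\to \ker([d]_{(-t)}\text{ on }H_{i-1}(\Gb{n};\F[t^{\pmu}]))\to 0.$$
I would also note that $[d]_{(-t)}$ evaluated at $t=1$ equals $\sum_{j=0}^{d-1}(-1)^{j}$, which is $1$ for $d$ odd and $0$ for $d$ even; in the even case $[d]_{(-t)}=(1-t)\,v(t)$ with $v(t)=\sum_{j=0}^{d/2-1}t^{2j}$ and $v(1)=d/2\neq 0$ in $\F$.

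The key input is the char $0$ computation of \cite[\S 4.2]{calmar}: for $\operatorname{char}\F=0$ and $n$ even, the finitely generated $\F[t^{\pmu}]$--module $H_*(\Gb{n};\F[t^{\pmu}])$ is torsion, $(1-t)$--primary (no other cyclotomic factor appears in its support), concentrated in homological degree $n-1$, and there it is the cyclic module generated by the class of $z_{n}$. Granting this, I finish by the case split. For $d$ odd, $[d]_{(-t)}$ is a unit modulo every power of $(1-t)$, so multiplication by it is an isomorphism of $H_*(\Gb{n};\F[t^{\pmu}])$; the short exact sequences above then force $H_*(\Gb{n};\F[t]/([d]_{(-t)}))=0$, i.e.\ the Poincar\'e polynomial is trivial. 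For $d$ even, multiplication by $[d]_{(-t)}=(1-t)v(t)$ on the cyclic $(1-t)$--primary module $H_{n-1}(\Gb{n};\F[t^{\pmu}])$ has one--dimensional kernel (the socle) and one--dimensional cokernel (the top quotient, generated by the image of $z_{n}$), while $H_{i}(\Gb{n};\F[t^{\pmu}])=0$ for $i\neq n-1$. Substituting into the short exact sequences gives $H_{n-1}(\Gb{n};\F[t]/([d]_{(-t)}))\cong\F$ with generator $z_{n}$; the connecting homomorphism carries $H_{n}(\Gb{n};\F[t]/([d]_{(-t)}))$ isomorphically onto that socle, the corresponding class being the one denoted $\frac{\DB(z_{n})}{1-t}$ in \cite{calmar}; and all other homology vanishes. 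This yields the Poincar\'e polynomial $q^{n-1}+q^{n}=(1+q)q^{n-1}$ with the stated basis.

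The only substantive point, hence the main obstacle, is extracting the displayed structural statement from \cite[\S 4.2]{calmar} (in particular the concentration in degree $n-1$ and the $(1-t)$--primality, which for $n$ odd already fails, cf.\ Proposition \ref{prop:homol_p2}) and matching the chain--level generator there with the symbols $z_{n}$ and $\frac{\DB(z_{n})}{1-t}$; everything else is the formal long exact sequence plus the observation that $[d]_{(-t)}$ vanishes at $t=1$ precisely when $d$ is even. An essentially equivalent alternative route uses the ring decomposition $\F[t]/([d]_{(-t)})\cong\bigoplus_{1<k\mid d}\F[t]/\varphi_{k}(-t)$, valid in characteristic $0$ since distinct cyclotomic polynomials are coprime: the $(1-t)$--primary concentration shows $H_*(\Gb{n};\F[t]/\varphi_{k}(-t))=0$ for $k\neq 2$, reducing the whole statement to the summand $H_*(\Gb{n};\F[t]/\varphi_{2}(-t))=H_*(\Gb{n};\F[t^{\pmu}]/(1-t))$, which is present exactly when $d$ is even and, by \cite[\S 4.2]{calmar}, has Poincar\'e polynomial $(1+q)q^{n-1}$ for $n$ even.
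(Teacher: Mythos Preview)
Your approach is the same as the paper's: both use the long exact sequence attached to
\[
0\longrightarrow \F[t^{\pmu}]\xrightarrow{\,[d]_{(-t)}\,}\F[t^{\pmu}]\longrightarrow \F[t]/([d]_{(-t)})\longrightarrow 0
\]
and then feed in the known rational computation of $H_*(\Gb{n};\F[t^{\pmu}])$ from \cite{calmar} (the paper additionally cites \cite[Prop.~3.2]{cal_sal_superelliptic} for the $n$ even case). You simply spell out what the paper leaves to the references.

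One small inaccuracy worth flagging: your claim that $H_*(\Gb{n};\F[t^{\pmu}])$ is purely $(1-t)$--primary for $n$ even is not quite right. As the paper itself records (see Remark~\ref{rem:J} and Lemma~\ref{lem:tau_invertible}), for $n$ even the module has $(1-t^2)^k$--torsion, and in characteristic~$0$ this genuinely splits into a $(1+t)$--primary piece and a $(1-t)$--primary piece; the $(1+t)$--part cannot vanish, since otherwise $H_*(\confm{n};\Q)=H_*(\Gb{n};\Q[t]/(1+t))$ would be zero. Likewise the module is not concentrated in a single homological degree once the $(1+t)$--part is taken into account. However, this does not damage your argument: you already observed that $[d]_{(-t)}\big|_{t=-1}=d\neq 0$, so multiplication by $[d]_{(-t)}$ is invertible on the $(1+t)$--primary summand and only the $(1-t)$--primary piece contributes to kernel and cokernel. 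With that correction in place your computation and identification of generators go through exactly as written, and the alternative cyclotomic decomposition you sketch at the end is a clean equivalent formulation.
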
	
\begin{proof}
This follows from  \cite[Prop.~3.2]{cal_sal_superelliptic} and by studying the long exact sequence associated to $$
0 \to \F[t^\pmu] \stackrel{[d]_{(-t)}}{\longrightarrow} \F[t^\pmu] \to \F[t]/([d]_{(-t)}) \to 0
$$ as in \cite[\S~4.2]{calmar}.
\end{proof}

\section{Exact sequences} \label{sec:exact_seq}



This section generalizes the results given in \cite[\S~4]{cal_sal_superelliptic}.

The 
fibration $\pi: \confmd{n}^d \to \conf_n$ has a continuous section $s$  that can be defined as follows.
\begin{df} \label{def:section}
  If $p$ is a monic polynomial with $n$ distinct roots $x_1, \ldots, x_n$ such that $|x_i| <1,$ $i=1,\dots,n,$  we define
$$s: p \mapsto \left(p, z:= \frac{(\max_i |x_i|) +1}{2}, \sqrt[d]{p(z)}) \right).$$
Here, if  $p(z) = \prod_i(z-x_i)$ we have $\Re (z-x_i) >0, $ and we choose  
$\sqrt[d]{z-x_i}$ as the unique  $d$-th root with maximum real part; this defines $\sqrt[d]{p(z)} := \prod_i \sqrt[d]{z-x_i}$ as a continuous function.
\end{df}
The section $s:\conf_n \to \confmd{n}^d$ lifts the section  $\overline{s}:\conf_n \to \confm{n}$ taking $p \mapsto (p, z)$. It follows from the exact sequence of the pair $(\confmd{n}^d, \conf_n)$ the splitting
$H_i(\confmd{n}^d) \simeq H_i(\conf_n) \oplus H_i(\confmd{n}^d, \conf_n)$. In a similar way we have 
$H_i(\confm{n}) \simeq H_i(\conf_n) \oplus H_i(\confm{n}, \conf_n)$.	

\begin{rem}\label{rem:Cn_sections}
	The group $\Z/d$ acts on $\confmd{n}^d$ as a group of automorphisms of the covering $\confmd{n}^d \to \confm{n}$. Hence we can define $d$ sections  $s^{(j)}: \conf_n \to \confmd{n}^d$, for $j \in \Z/d$ as follows:
	$$s^{(j)}: p \mapsto \left(p, z:= \frac{(\max_i |x_i|) +1}{2}, e^{\frac{2\pi i j }{d}}\sqrt[d]{p(z)}) \right)$$
	where we choose $\sqrt[d]{p(z)}$ as above (and  $s=s^{(0)}$).
	
	We can include $S:\conf_n \times \Z/d \into \confmd{n}^d$ and the action of $\Z/d$ exchanges the components $\conf_n \times \{j\}$. Hence we can understand the inclusion $s:\conf_n \into \confmd{n}^d$ in homology via the following diagram:
	$$
	H_i(\conf_n) \stackrel{\Id \otimes 1}{\longrightarrow} H_i(\conf_n) \otimes \ring[t]/(1-(-t)^d) \stackrel{S_*}{\to} H_i(\confmd{n}^d) 
	$$
	and the composition is injective.
\end{rem}

\begin{rem} \label{rem:s_sprime}
	If $\gcd(n,d)=1$ any two sections $s^{(k)},s^{(j)}$ are homotopic. In fact, given $a,b$ integers such that $an+bd=1$ we can define in a unique way a continuous family of maps
	$
	s_t:\conf_n \to \confmd{n}
	$
	such that $s_0 = s^{(k)}$ and
	$$
	s_t: \mapsto \left(p, z_t:= e^{2\pi i t (k-j)a }\frac{(\max_i |x_i|) +1}{2}, \sqrt[d]{p(z_t)} \right)
	$$
	where we choose the $d$-th root $\sqrt[d]{p(z_t)}$ as above for $t=0$ and we extend it continuously for $t>0$. 
	Since $p(z_t)$ is a product of $n$ factors we have $s_1 = s^{(j)}$. As a consequence ${s^{(i)}}_* = {s^{(j)}}_*$.
\end{rem}

Generalizing the construction given in \cite[\S~4]{cal_sal_superelliptic} (see also \cite{bianchi}) for the case $d=2$, we consider the following decomposition.
We have $\totsp_n^d= \confmd{n}^d\cup N,$ where $N$ is a small tubular neighborhood of the subset $\confm{n-1}$ corresponding to the ramification locus. The intersection $\confmd{n}^d\cap N$ contracts onto  $\confm{n-1} \times S^1$. Moreover, a copy of the subspace $\conf_n = s(\conf_n)$ is contained into $\confmd{n}^d.$ 

Therefore the relative Mayer-Vietoris long exact sequence 
gives the following long exact sequence:
$$
\cdots \to H_i(\confm{n-1}) \oplus H_{i-1}(\confm{n-1}) \otimes H_1(S^1)\stackrel{\iota}{\to} H_i (\confm{n-1}) \oplus H_i(\confmd{n}^d, \conf_n) \to H_i(\totsp_n^d, \conf_n) \to \cdots
$$
From Kunneth decomposition the map $\iota$ factorizes taking the first (second) factor into the first (second) factor; so the exact sequence reduces to: 
\begin{equation}\label{eq:bia1}
\cdots \to  H_{i-1}(\confm{n-1}) \otimes H_1(S^1)\stackrel{\iota}{\to} H_i(\confmd{n}^d, \conf_n) \to H_i(\totsp_n^d, \conf_n) \to \cdots
\end{equation}


The argument given in \cite[\S~4]{cal_sal_superelliptic} gives
$$
H_i(\Br_{n}; \st_n) = H_i(\conf_{n}; \st_n) \simeq H_i(\confm{n-1})=H_i(\Gb{n-1})
$$
where $\st_n$ is the permutation representation (see section 2) and $H_1(S^1 \times P) \simeq \st_n$. Moreover, from Remark \ref{rem:globalsection}:
$$
H_i (\totsp_n^d, \conf_n) \simeq H_{i-1}(\Br_n; H_1(\surf_n^d))
$$
and finally the term $ H_i(\confmd{n}^d, \conf_n)$ is isomorphic to $H_{i-1}(\Br_n; H_1(\ddiskP^d))$.

Therefore we can rewrite \eqref{eq:bia1} as an exact sequence involving non local homology of the braid groups:
\begin{equation}\label{eq:bia2}
\cdots\!\to\!\! H_{i-1}(\Br_n; H_1(S^1\! \times\! P)\!)\!\! \stackrel{\iota}{\to} \!\!H_{i-1}(\Br_n; H_1(\ddiskP^d)\!)\!\! \to\!\!  H_{i-1}(\Br_n; H_1(\surf_n^d)\!)\!\! \to\! \cdots
\end{equation}
Sequence \eqref{eq:bia2} is the long exact sequence for the homology of the group $\Br_n$ associated to the short exact sequence of coefficients
$$
0 \to H_1(S^1 \times P) \to H_1(\disk \times P) \oplus H_1(\ddiskP^d) \to H_1(\surf_n^d) \to 0
$$
coming from the Mayer-Vietoris long exact sequence associated to the decomposition: $\surf_n^d = \disk \times P \cup \ddiskP^d$, where  $\disk \times P \cap \ddiskP^d \simeq S^1 \times P$.

We recall from \cite[Def.~5]{cal_sal_superelliptic} 
the map 
$$
\mu: \confm{n-1} \times S^1 \ni (p,e^{it}) \mapsto (p_1 + \delta(\overline{p})e^{it}, \overline{p}) \in \confm{n}
$$
where $p= (p_1, \{p_2, \ldots, p_n\})$ in $\confm{n-1}$ and   $\overline{p} = \{p_1, \ldots, p_n\},$ while $$\delta(\overline{p}):= \frac{1}{2}\min \left( \{ |p_1 - p_i|,  2 \leq i  \leq n\} \cup \{ 1-|p_1|\}
\right).$$


We have a commutative diagram
\begin{equation}\label{diag:trerighe0}
\begin{tabular}{c}
\xymatrix @R=2pc @C=2pc {
H_{i-1}(\Br_n; H_1(S^1 \times P)) \ar[r]^\iota \ar[d]^\simeq & H_{i-1}(\Br_n; H_1(\ddiskP^d))\ar[d]^\simeq  \\
H_{i-1}(\confm{n-1}) \otimes H_1(S^1) \ar[d]^{\mu_*} \ar[r] & H_i(\confmd{n}^d, \conf_n) \\
H_i(\confm{n}) 
&H_i(\confmd{n}^d)  \ar[u]_J
}
\end{tabular}
\end{equation}
where $\mu_*$ is induced by the map $\mu$ above 
and $J$ is induced by the inclusion $$\confmd{n}^d \to (\confmd{n}^d, \conf_n).$$
Some properties of  $\mu_*$ are investigated in \cite[Prop.~ 4.1,  Prop.~4.2, Rmk.~5]{cal_sal_superelliptic}. In particular we have that  $\mu_*$ is injective and when we consider homology with coefficients in a field $\F$ of characteristic $0$ the map
$$
\mu_*: H_{i-1}(\confm{n-1}) \otimes H_1(S^1) \to H_i(\confm{n})
$$
is an isomorphism.

Recall the notation $[d]_q$ for the $q$-analog $1+ q + \cdots + q^{d-1}$.

\begin{prop}\label{prop:tau1}
Let $\tau: H_i(\confm{n}) \to H_i(\confmd{n}^d)$ be the transfer map induced by the $d$-fold covering $\confmd{n} ^d\to \confm{n}$.
The following diagram commutes:
\begin{equation}\label{diag:trerighe}
\begin{tabular}{c}
\xymatrix @R=2pc @C=2pc {
	H_{i-1}(\Br_n; H_1(S^1 \times P)) \ar[r]^\iota \ar[d]^\simeq & H_{i-1}(\Br_n; H_1(\ddiskP^d))\ar[d]^\simeq  \\
	H_{i-1}(\confm{n-1}) \otimes H_1(S^1) \ar[d]^{\mu_*} \ar[r] & H_i(\confmd{n}^d, \conf_n) \\
	H_i(\confm{n}) \ar[r]^{\tau} &H_i(\confmd{n}^d)  \ar[u]_J
}
\end{tabular}
\end{equation}
\end{prop}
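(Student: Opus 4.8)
The plan is to derive the commutativity of \eqref{diag:trerighe} from the naturality of the transfer with respect to pullback squares of finite coverings, together with the computation of the transfer of the connected $d$-fold self-covering of a circle. Since the top square of \eqref{diag:trerighe0} is already known to commute, it is enough to prove the bottom square, i.e.\ that $\iota$, read through the vertical isomorphisms as a map $H_{i-1}(\confm{n-1})\otimes H_1(S^1)\to H_i(\confmd{n}^d,\conf_n)$, coincides with $J\circ\tau\circ\mu_*$.

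First I would make the Mayer-Vietoris geometry explicit. The coordinate $y$ is a global normal coordinate to the ramification locus inside $\totsp_n^d$, so $N$ is a trivial disk bundle over $\confm{n-1}$ and $\confmd{n}^d\cap N$ retracts onto the product $\confm{n-1}\times S^1$; likewise $z-x_k$ is a global normal coordinate to the ramification locus in $\conf_n\times\disk$, and $\mu$ is a reparametrization of the inclusion of the corresponding punctured normal neighbourhood into $\confm{n}$. Moreover $\confmd{n}^d\cap N$ is exactly the restriction over this neighbourhood of the $d$-fold covering $p:\confmd{n}^d\to\confm{n}$, so one gets a pullback square of $d$-fold coverings
\[
\xymatrix @R=2pc @C=2pc {
\confmd{n}^d\cap N \ar[r]^{\iota'} \ar[d] & \confmd{n}^d \ar[d]^{p} \\
\confm{n-1}\times S^1 \ar[r]^{\mu} & \confm{n}
}
\]
in which $\iota'$ is the inclusion; by the construction of the Mayer-Vietoris sequence \eqref{eq:bia1}, $\iota$ is $J\circ\iota'_*$ restricted to the Kunneth summand $H_{i-1}(\confm{n-1})\otimes H_1(S^1)$ of $H_i(\confmd{n}^d\cap N)$. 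Since $\mu$ carries the $S^1$-factor onto a meridian of the ramification locus, and a meridian maps to a generator of $\Z/d$ under the classifying homomorphism $\Gb{n}=\pi_1(\confm{n})\to\Z/d$ of $p$, the left vertical covering is, along the fibre direction, the connected $d$-fold covering $q:S^1\to S^1$, $z\mapsto z^d$ (the $\confm{n-1}$-direction is sheared, but $\confmd{n}^d\cap N$ itself stays a product because $y$ is a global coordinate).

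For the comparison proper, I would use that the transfer is natural for pullback squares of finite coverings — on singular chains the set of $p$-lifts of $\mu_\#(c)$ is $\iota'_\#$ of the set of lifts of $c$ along the left vertical map — so $\tau\circ\mu_* = \iota'_*\circ\tau'$, with $\tau'$ the transfer of the left vertical covering. It then remains to evaluate $\tau'$ on the summand $H_{i-1}(\confm{n-1})\otimes H_1(S^1)$: representing a class there by a cycle $\hat{a}\times S^1$, its $\iota'$-preimage is all of $\hat{a}\times S^1$ in the total product (for each point of $\confm{n-1}$ and each angle downstairs there are $d$ angles upstairs covering the whole fibre circle), so $\tau'$ sends this summand onto the corresponding summand of $H_i(\confmd{n}^d\cap N)$ by the identity; this is exactly the standard computation $\tau_q[S^1]=[S^1]$ for the covering $q$, and the shear of the $\confm{n-1}$-direction only contributes terms in the complementary Kunneth summand, which is the one discarded in passing to \eqref{eq:bia1}. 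Composing with $J$ gives $J\circ\tau\circ\mu_*=J\circ\iota'_*\circ\tau'=\iota$ on the relevant summand, which is the asserted commutativity.

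The main obstacle I anticipate is purely one of bookkeeping: one must keep track of the two circles that occur — the parameter circle on which $\mu$ is defined (the argument of $z-x_k$ in the base disk) and the boundary circle of a small ramification disk in the surface $\Sigma_n^d$, which is its connected $d$-fold cover (the argument of $y$) — and check that, under the identifications in the left-hand column of \eqref{diag:trerighe}, it is genuinely the transfer $\tau$, and not multiplication by $d$ composed with a section, that closes the diagram. It is precisely the value $\tau_q[S^1]=[S^1]$ that resolves this. Once the conventions are pinned down the argument is the verbatim generalization of the case $d=2$ treated in \cite{cal_sal_superelliptic}, replacing the double cover by the $d$-fold one.
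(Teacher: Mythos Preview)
Your argument is correct and is essentially the approach the paper intends: the paper defers the proof to the $d=2$ case in \cite[Prop.~4.3]{cal_sal_superelliptic}, and what you have written is precisely the natural generalization---transfer naturality for the pullback square of $d$-fold coverings over $\mu$, together with the computation $\tau_q[S^1]=[S^1]$ for the connected $d$-fold self-cover of the circle. Your identification of the bookkeeping issue (the two circles $S^1_y$ and $S^1_{z-x_k}$) and its resolution via $\tau_q$ is exactly the point; in fact the shear contributes nothing even to the complementary K\"unneth summand, since the base homeomorphism $(b,\phi)\mapsto(b,\phi-f(b))$ fixes the class $a\otimes[S^1]$, so your hedging there is unnecessary but harmless.
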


\begin{prop}\label{prop:commut_tau}
The following diagram commutes:
$$
\begin{tabular}{c}
\xymatrix @R=2pc @C=2pc {
H_i(\confm{n})\ar[d]^\simeq \ar[r]^{\tau} &H_i(\confmd{n}^d) \ar[d]^\simeq \\
H_i(\Gb{n}; \ring[t]/(1+t)) \ar[r]^(0.45){[d]_{(-t)}} & H_i(\Gb{n}; \ring[t]/(1-(-t)^d))	.
}
\end{tabular}
$$
where in the bottom row we are considering the map induced by the $[d]_{(-t)}$-multiplication map $C_*(\Gb{n}, \ring[t]/(1+t)) \to C_*(\Gb{n}, \ring[t]/(1-(-t)^d)).$ 
\end{prop}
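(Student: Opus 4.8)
The plan is to exhibit a single chain map which simultaneously realises the topological transfer $\tau$ and the algebraic multiplication by $[d]_{(-t)}$. I would fix a CW model for $\confm{n}$ — for instance the (algebraic) Salvetti complex of $\Gb{n}$, whose cellular chains with local coefficients in $\ring[t^\pmu]$ form the complex $C_*(\Gb{n};\ring[t^\pmu])$ of \cite[\S~4.2]{calmar}; recall that, in that convention, each standard generator acts $\ring[t^\pmu]$-linearly and the monodromy of the relevant covering is recorded by $-t$. Pulling this CW structure back along the $d$-fold covering $\confmd{n}^d\to\confm{n}$ gives a CW structure on $\confmd{n}^d$ whose cellular chain complex is $C_*(\Gb{n};\ring[t^\pmu])\otimes_{\ring[t^\pmu]}\ring[t]/(1-(-t)^d)$, an identification which is precisely the one underlying the right-hand vertical isomorphism of the square; likewise the left-hand vertical isomorphism identifies $C_*(\confm{n})$ with $C_*(\Gb{n};\ring[t^\pmu])\otimes_{\ring[t^\pmu]}\ring[t]/(1+t)$, where the $\Gb{n}$-action on $\ring[t]/(1+t)$ is trivial (every generator acts as $-t\equiv1$), so that this is genuinely the constant-coefficient homology of $\confm{n}$.

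With these models at hand, the transfer of the finite regular covering $\confmd{n}^d\to\confm{n}$ is given on chains by sending each cell of $\confm{n}$ to the formal sum of the $d$ cells lying above it; in other words $\tau$ is induced by the $\ring[t^\pmu]$-linear map $\ring[t]/(1+t)\to\ring[t]/(1-(-t)^d)$ which is multiplication by the norm element $1+(-t)+\cdots+(-t)^{d-1}=[d]_{(-t)}$ of the deck group $\Z/d=\langle -t\rangle$, tensored over $\ring[t^\pmu]$ with $C_*(\Gb{n};\ring[t^\pmu])$. This multiplication is well defined on the quotients because $(1+t)\,[d]_{(-t)}=1-(-t)^d$, so it carries $(1+t)\ring[t^\pmu]$ into $(1-(-t)^d)\ring[t^\pmu]$, and it is $\Gb{n}$-equivariant because $[d]_{(-t)}$ is central in $\ring[t^\pmu]$. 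Hence it descends exactly to the chain map $C_*(\Gb{n};\ring[t]/(1+t))\to C_*(\Gb{n};\ring[t]/(1-(-t)^d))$ appearing in the statement, and passing to homology yields the commutativity of the diagram.

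The one non-formal point — and the one I would treat with care — is the sign, i.e. verifying that the geometric deck transformation $y\mapsto\zeta_d\,y$ of $\confmd{n}^d\to\confm{n}$ corresponds under these identifications to multiplication by $-t$ rather than by $t$. This is the same phenomenon already isolated for $d=2$ in \cite[\S~4]{cal_sal_superelliptic}: a standard generator of $\Br_n\subset\Gb{n}$, being a half-twist, contributes the reflection eigenvalue $-1$ (this is also why the factor $(-1)^n$ occurs in the proof of Theorem \ref{thm:Bddn} and why the monodromy in Theorem \ref{thm:action} carries $\gamma^k_i$ to $(\gamma^k_{i+1})^{-1}$, and it governs the root choices in Definition \ref{def:section}), so that the one-variable specialisation of the coefficient module relevant to the covering is precisely the one with monodromy $-t$. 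Once this normalisation is matched with the convention of \cite{calmar}, everything else is the formal bookkeeping above; alternatively one may combine the geometric description of $\tau$ in Proposition \ref{prop:tau1} with the explicit forms of $\mu_*$ and $J$ imported from \cite{cal_sal_superelliptic}.
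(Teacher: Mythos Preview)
Your proposal is correct and follows the same approach that the paper intends: the paper simply refers to \cite[Prop.~4.4]{cal_sal_superelliptic} for the $d=2$ case, and the argument there is precisely the standard identification of the transfer for a regular $\Z/d$-covering with multiplication by the norm element $\sum_{j=0}^{d-1}(-t)^j=[d]_{(-t)}$ in the coefficient module, which is exactly what you spell out. Your explicit discussion of the sign normalisation (why the deck generator corresponds to $-t$ rather than $t$) is the only genuinely non-formal point, and it is handled correctly and in more detail than the paper itself provides.
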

The proofs of Propositions \ref{prop:tau1}, \ref{prop:commut_tau} are analogous to the proofs of  \cite[Prop.~4.3, Prop.~4.4]{cal_sal_superelliptic}.

We also have the following analogue of \cite[Rmk.~6]{cal_sal_superelliptic}:
\begin{rem} \label{rem:J}
Consider the isomorphism $H_i(\confmd{n}^d) \simeq H_i(\Gb{n}; \ring[t]/(1-(-t)^d))$. Let $\ring$ be a field of characteristic $p$. For $p \nmid d$ the second term decomposes as $$H_i(\Gb{n}; \ring[t]/(1+t)) \oplus H_i(\Gb{n}; \ring[t]/[d]_{(-t)})$$ and moreover 
{for $n$ odd} and $p \nmid d$    
the term $H_i(\Gb{n}; \ring[t]/[d]_{(-t)})$ is trivial, which follows from the fact that for $n$ odd the module $H_i(\Gb{n}; \ring[t^\pmu])$ has $(1+t)$-torsion and from the homology long exact sequence associated to
	$$
	0 \to \ring[t^\pmu] \stackrel{[d]_{(-t)}}{\longrightarrow} \ring[t^\pmu ] \to \ring[t]/[d]_{(-t)} \to 0
	$$
since $[d]_{(-t)} \equiv d \mod (1+t)$.
Since for $n$ even  the module $H_i(\Gb{n}; \ring[t^\pmu])$ has $(1-t^2)^k$-torsion for suitable $k$ (see \cite[Thm.~4.12]{calmar}), the same argument applies for $n$ even, when $d$ is odd and $p\nmid d$. In fact for $p \neq 2$ we have that $[d]_{(-t)}$ is co-prime both with $(1+t)$ and with $(1-t)$ and, for $p$ odd, $(1+t)$ and $(1-t)$ are co-primes; for $p=2$ and $d$ odd we have that $[d]_{(-t)}$ and $(1-t^2)$ are co-primes. 

	In particular, under the condition stated above the homology groups $H_i(\confmd{n}^d) \simeq H_i(\Gb{n}; \ring[t]/(1-(-t)^d))$ and $H_i(\confm{n}) \simeq H_i(\Gb{n}; \ring[t]/(1+t))$ are isomorphic and the isomorphism is induced by the quotient map $\ring[t]/(1-(-t)^d) \to \ring[t]/(1+t)$ (again, here we are using that $p\nmid d$ and $n$ is odd). 
	So we can consider the commuting diagram
	$$
	\begin{tabular}{c}
	\xymatrix @R=2pc @C=2pc {
		0 \ar[r]& H_i(\conf_n) \ar[d]^\simeq \ar[r] & H_i(\confmd{n}^d)	\ar[d]^\simeq \ar[r]^J & H_i(\confmd{n}^d, \conf_n) \ar[d]^\simeq  \ar[r] & 0\\
		0 \ar[r] & H_i(\conf_n) \ar[r] & H_i(\confm{n}) \ar[r]^{\overline{J}} & H_i(\confm{n}, \conf_n) \ar[r] & 0.
	}
	\end{tabular}
	$$
	where the last vertical map is an isomorphism from the five lemma. 
	From the right square we have that the map $J$ corresponds to the homomorphism
	$$
\overline{J}:	H_i(\Gb{n}; \ring[t]/(1+t)) \to H_i((\Gb{n}, \Ga{n-1}); \ring[t]/(1+t))
	$$
	associated to the inclusion $\Ga{n-1} \into \Gb{n}$ induced by $\conf_n \into \confm{n}$.  From the short exaxt sequence in the second row of the diagram above we have that the homomorphism $$\overline{J}: H_i(\confm{n}) = H_i(\confm{n}, \conf_n) \oplus H_i(\conf_n) \to H_i(\confm{n}, \conf_n)$$ is the projection on the first term of the direct sum.
\end{rem}

\begin{lem}\label{lem:tau_invertible}
If $p$ is a prime such that $p \nmid d$ or $p=0$, $\ring$ is a field of characteristic $p$ then the homomorphism
$$
\overline{\tau}:H_i(\Gb{n}; \ring[t]/(1+t)) \stackrel{[d]_{(-t)}}{\longrightarrow} H_i(\Gb{n}; \ring[t]/(1-(-t)^d))
$$
is invertible if at least one between $n$  and $d$ is odd. 
\end{lem}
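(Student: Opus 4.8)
The plan is to deduce the statement from Remark~\ref{rem:J} together with the elementary identity $(1+t)\,[d]_{(-t)}=1-(-t)^d$ in $\ring[t]$. The key observation is that $[d]_{(-t)}\equiv [d]_1=d\pmod{1+t}$, so composing the $[d]_{(-t)}$-multiplication map $\ring[t]/(1+t)\to\ring[t]/(1-(-t)^d)$ with the quotient projection $\bar q\colon\ring[t]/(1-(-t)^d)\to\ring[t]/(1+t)$ produces, on $\ring[t]/(1+t)$, simply multiplication by the scalar $d$ --- which is a unit in $\ring$ since $p\nmid d$.

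First I would check that Remark~\ref{rem:J} applies under our hypotheses: for $n$ odd it requires only $p\nmid d$, while for $n$ even the assumption ``$n$ or $d$ odd'' forces $d$ to be odd, which is exactly the extra case recorded there. Remark~\ref{rem:J} then gives that $\bar q$ induces an \emph{isomorphism}
$$
q_*\colon H_i(\Gb{n};\ring[t]/(1-(-t)^d))\ \stackrel{\sim}{\longrightarrow}\ H_i(\Gb{n};\ring[t]/(1+t)).
$$
If one prefers a self-contained route, one notes that $[d]_{(-t)}(-1)=d\neq 0$ in $\ring$, so $(1+t)$ and $[d]_{(-t)}$ are coprime in the PID $\ring[t]$, which yields a $\ring[t^\pmu]$-module splitting $\ring[t]/(1-(-t)^d)\cong\ring[t]/(1+t)\oplus\ring[t]/[d]_{(-t)}$; the second summand contributes nothing to $H_*(\Gb{n};-)$ because multiplication by $[d]_{(-t)}$ is invertible on $H_*(\Gb{n};\ring[t^\pmu])$ --- this module being $(1+t)$-power torsion for $n$ odd and $(1-t^2)^k$-torsion for $n$ even by \cite[Thm.~4.12]{calmar}, and $[d]_{(-t)}=d+(1+t)g(t)$ being a unit modulo any power of $(1+t)$, and also modulo any power of $(1-t^2)$ when $d$ is odd.

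With $q_*$ in hand the conclusion follows at once. By its definition $\overline\tau$ is the map induced on $H_i(\Gb{n};-)$ by the chain-level $[d]_{(-t)}$-multiplication $C_*(\Gb{n};\ring[t]/(1+t))\to C_*(\Gb{n};\ring[t]/(1-(-t)^d))$; functoriality of $H_i(\Gb{n};-)$ in the coefficient module shows that $q_*\circ\overline\tau$ is induced by $\bar q$ composed with multiplication by $[d]_{(-t)}$, hence equals $d\cdot\id$ on $H_i(\Gb{n};\ring[t]/(1+t))$, an isomorphism. Since $q_*$ is an isomorphism, so is $\overline\tau$.

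The only genuine content is the structural input from \cite{calmar}, repackaged in Remark~\ref{rem:J}: that the $\ring[t^\pmu]$-torsion of $H_*(\Gb{n};\ring[t^\pmu])$ is supported on $(1+t)$ for $n$ odd and on $(1-t^2)$ for $n$ even. The hypothesis ``$n$ or $d$ odd'' is what keeps $[d]_{(-t)}$ a unit modulo powers of this torsion locus --- equivalently, what makes $H_*(\Gb{n};\ring[t]/[d]_{(-t)})$ vanish; for $n$ and $d$ both even one has $[d]_{(-t)}(1)=0$ and the argument breaks down (cf.\ Proposition~\ref{prop:hrazionale_minus_t}).
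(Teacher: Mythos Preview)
Your argument is correct and is essentially the same as the paper's: both reduce to Remark~\ref{rem:J} to identify $H_i(\Gb{n};\ring[t]/(1-(-t)^d))$ with $H_i(\Gb{n};\ring[t]/(1+t))$ via the quotient map, and then observe that $[d]_{(-t)}\equiv d\pmod{1+t}$ is a unit in $\ring$. Your presentation is a bit more explicit about the composition $q_*\circ\overline\tau=d\cdot\id$ and about why the parity hypothesis is needed, but the logic is identical.
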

\begin{proof}
This follows since, for odd $n$, the homology group $H_i(\Gb{n}; \ring[t^\pmu])$ has $(1+t)$-torsion, while for $n$ even the group has torsion of order $(1-t^2)^k$ for suitable $k.$



For 
$p \nmid d$  (and with $d$ odd when $n$ is even) we have that  $$H_i(\Gb{n}; \ring[t]/[d]_{(-t)}) = 0$$ (see Remark \ref{rem:J}) and hence 
$$
H_i(\Gb{n}; \ring[t]/(1-(-t)^d)) \simeq H_i(\Gb{n}; \ring[t]/(1+t))
$$
and the map $\overline{\tau}$ is equivalent to the multiplication map
$$
H_i(\Gb{n}; \ring[t]/(1+t)) \stackrel{[d]_{(-t)}}{\longrightarrow} H_i(\Gb{n}; \ring[t]/(1+t))
$$
and $[d]_{(-t)}$ is invertible in $\ring[t]/(1+t)$.
\end{proof}
\begin{rem} \label{rem:Q_tau_J}
In characteristic $p=0$, for $n$ even the decompositions 
\begin{align*}
H_i(\confmd{n}^d) \simeq & H_i(\Gb{n}; \ring[t]/(1-(-t)^d)) = \\ = & H_i(\Gb{n}; \ring[t]/(1+t)) \oplus H_i(\Gb{n}; \ring[t]/[d]_{(-t)})
\end{align*}
and $$H_i(\confm{n}) = H_i(\confm{n}, \conf_n) \oplus H_i(\conf_n)$$ give the following consequences.
Since 
$H_i(\Gb{n}; \ring[t]/([d]_{(-t)}))$ is trivial for $d$ odd and has Poincar\'e polynomial $(1+q)q^{n-1}$ for $d$ even (see Proposition \ref{prop:hrazionale_minus_t})
and since $H_*(\conf_n)$ has Poincar\`e polynomial $(1+q)$ (see \cite[\S~3.1]{cal_sal_superelliptic}), we have that the map
$$
J: H_i(\confmd{n}^d)  \to H_i(\confmd{n}^d, \conf_n) 
$$
is an isomorphism for $i>1$.
Moreover the argument of Lemma \ref{lem:tau_invertible} implies that
the map
$$\tau: H_*(\confm{n}) \to H_*(\confmd{n}^d)$$ is injective and its cokernel is trivial for $d$ odd and has Poincar\'e polynomial $(1+q)q^{n-1}$ for $d$ even.
\end{rem}

\begin{prop} \label{prop:coker_n_pari}
Let $n$ be even and $\F$ a field of characteristic $0$. Then for $i>1$ the map 
$$
\iota: H_{i-1}(\Br_n; H_1(S^1 \times P))  \to H_{i-1}(\Br_n; H_1(\ddiskP^d))
$$
is injective; its cokernel has rank $1$ if $d$ is odd and $i = n-1, n-2$; otherwise its rank is $0$.
\end{prop}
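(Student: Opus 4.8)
The plan is to reduce the statement to the behaviour of the transfer map $\tau$, which has already been pinned down in characteristic $0$. By Proposition~\ref{prop:tau1}, under the vertical isomorphisms of diagram~\eqref{diag:trerighe} the map $\iota$ is identified with the composite
\[
H_{i-1}(\confm{n-1})\otimes H_1(S^1)\ \xrightarrow{\ \mu_*\ }\ H_i(\confm{n})\ \xrightarrow{\ \tau\ }\ H_i(\confmd{n}^d)\ \xrightarrow{\ J\ }\ H_i(\confmd{n}^d,\conf_n),
\]
so it suffices to control these three arrows over a field $\F$ of characteristic $0$. First I would recall that $\mu_*$ is an isomorphism in characteristic $0$ (stated right after diagram~\eqref{diag:trerighe0}). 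Next, the section $s$ gives a splitting $H_i(\confmd{n}^d)=H_i(\conf_n)\oplus H_i(\confmd{n}^d,\conf_n)$, so $J$ is surjective with $\ker J\cong H_i(\conf_n)$; since $H_*(\conf_n;\F)$ has Poincar\'e polynomial $1+q$, this kernel vanishes for $i>1$ and $J$ is an isomorphism there (this is part of Remark~\ref{rem:Q_tau_J}).

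Granting this, for $i>1$ the composite $\iota=J\circ\tau\circ\mu_*$ is injective, since $\tau$ is injective (Remark~\ref{rem:Q_tau_J}) and $\mu_*,J$ are isomorphisms; moreover $\coker(\iota)\cong\coker(\tau)$ in homological degree $i$, because composing on either side with an isomorphism does not change the cokernel. It then remains to identify $\coker(\tau)$ in that degree. For this I would use Proposition~\ref{prop:commut_tau}: under $H_i(\confm{n})\simeq H_i(\Gb{n};\F[t]/(1+t))$ and $H_i(\confmd{n}^d)\simeq H_i(\Gb{n};\F[t]/(1-(-t)^d))$ the transfer $\tau$ becomes multiplication by $[d]_{(-t)}$; since $1-(-t)^d=(1+t)\,[d]_{(-t)}$ with $(1+t)$ and $[d]_{(-t)}$ coprime over a characteristic-$0$ field, the Chinese remainder decomposition gives $H_i(\confmd{n}^d)\cong H_i(\confm{n})\oplus H_i(\Gb{n};\F[t]/[d]_{(-t)})$ with $\tau$ the inclusion of the first summand, whence $\coker(\iota)\cong H_i(\Gb{n};\F[t]/[d]_{(-t)})$ for all $i>1$. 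Finally Proposition~\ref{prop:hrazionale_minus_t} evaluates this last module — it is trivial for one parity of $d$ and one-dimensional in the two top degrees of the $\Gb{n}$-homology for the other — and reading off the coefficient of $q^i$ yields precisely the asserted rank of $\coker(\iota)$ together with the exceptional values of $i$.

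Since all of the genuine homological input is already contained in Propositions~\ref{prop:tau1}, \ref{prop:commut_tau} and \ref{prop:hrazionale_minus_t} and in Remark~\ref{rem:Q_tau_J}, this proof amounts to assembling those results, and I do not expect a real obstacle. The one point needing a little care — and what I would double-check — is the homological degree bookkeeping: the map $\mu_*$ raises degree by one, absorbing the $S^1$ factor, so the degree-$(i-1)$ assertion about $\iota$ on $\Br_n$-homology corresponds to a degree-$i$ assertion on the configuration spaces. It is exactly this shift that converts the two-degree window where $\coker(\tau)$ is supported into the exceptional values of $i$ in the statement, and it is also why the hypothesis $i>1$ is precisely what is needed both for $J$ to be an isomorphism and for $H_*(\conf_n)$ not to interfere.
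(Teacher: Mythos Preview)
Your approach is exactly the paper's: its proof just cites \cite[Rmk.~5]{cal_sal_superelliptic} for the fact that $\mu_*$ is an isomorphism in characteristic~$0$, and Remark~\ref{rem:Q_tau_J} for the behaviour of $\tau$ and $J$, which is precisely the decomposition $\iota=J\circ\tau\circ\mu_*$ you spell out.

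One point worth making explicit: your final sentence is deliberately vague (``trivial for one parity \dots\ one-dimensional \dots\ for the other''), and if you actually read off Proposition~\ref{prop:hrazionale_minus_t} you get $\coker\tau$ nontrivial for $d$ \emph{even}, with Poincar\'e polynomial $(1+q)q^{n-1}$, hence rank~$1$ in configuration-space degrees $i=n-1,n$ --- not for $d$ odd in degrees $i=n-1,n-2$ as printed. This is consistent with Theorem~\ref{thm:unstable} and with the tables (e.g.\ the free $\Z$'s for $d=2,4,6$ and their absence for $d=3,5$), so the mismatch is a typo in the statement rather than a gap in your argument; but you should say so explicitly rather than finesse it.
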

\begin{proof}
The proposition follows from \cite[Rmk.~5]{cal_sal_superelliptic}
and from Remark \ref{rem:Q_tau_J}.
\end{proof}
\begin{thm}\label{teo:tau_restricted}
Consider the inclusions $H_i(\conf_n) \subset H_i(\confm{n}, \conf_n)$ 
and $ H_i(\conf_n) \subset H_i(\confmd{n}^d)$ 
associated to the sections $\overline{s}:\conf_n \into \confm{n}$  and $s:\conf_n \into \confmd{n}^d$.
If $n$ is odd 	
$$\tau(H_*(\conf_n)) \subset H_*(\conf_n)$$
and for $x \in H_*(\conf_n)$ we have 
$\tau(x) = d x$.
Moreover if $n$ is even, $d$ is odd and $\ring$ is a ring of characteristic $p$ with $p \nmid d$, then for $x \in H_*(\conf_n; \ring)$ we have 
$\tau(x) = d x$.
\end{thm}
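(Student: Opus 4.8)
\medskip
\noindent\textit{Proof plan.} The plan is to compare the transfer $\tau$ with the $d$ sections $s^{(0)},\dots,s^{(d-1)}$ of Remark~\ref{rem:Cn_sections}. The first step is to pull back the $d$-fold covering $p\colon\confmd{n}^d\to\confm{n}$ along $\overline{s}\colon\conf_n\to\confm{n}$. Since $\overline{s}$ keeps the extra point at a point $z$ with $\Re(z-x_i)>0$ for every $i$, when the $x_i$ braid the unordered set $\{z-x_i\}$ returns to itself while $\prod_i(z-x_i)$ never winds around $0$; hence the composition $\pi_1(\conf_n)\to\pi_1(\confm{n})\to\Z/d$ is trivial and the pulled-back covering is the trivial one $\conf_n\times\Z/d$, whose $d$ sheets are precisely the sections $s^{(j)}$. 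By naturality of the transfer for this Cartesian square of coverings, and since the transfer of a trivial covering is the sum over its sheets, one gets
\[
\tau\big(\overline{s}_*(x)\big)=\sum_{j\in\Z/d} s^{(j)}_*(x).
\]

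The second step identifies the $H_*(\conf_n)$-component of this class. Writing $\pi=\overline{\pi}\circ p$, with $\overline{\pi}\colon\confm{n}\to\conf_n$ forgetting the extra point, the identities $p_*\circ\tau=d\cdot\id$ (the covering degree), $\overline{\pi}\circ\overline{s}=\id$ and $\pi\circ s=\id$ give $\pi_*\big(\tau(\overline{s}_*(x))\big)=d\,x=\pi_*\big(d\cdot s_*(x)\big)$. As $s$ splits $\pi$, we have $H_*(\confmd{n}^d)=s_*\big(H_*(\conf_n)\big)\oplus\ker\pi_*$ with $\ker\pi_*\cong H_*(\confmd{n}^d,\conf_n)$, so $\tau(\overline{s}_*(x))-d\cdot s_*(x)$ lies in $H_*(\confmd{n}^d,\conf_n)$; no hypothesis has been used so far. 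Writing $s^{(j)}=\gamma^{j}\circ s$ for a fixed generator $\gamma$ of the deck group $\Z/d$, we have $s^{(j)}_*=(\gamma_*)^{j}\circ s_*$, so what remains is the claim that $\gamma_*$ acts as the identity on $H_*(\confmd{n}^d)$ --- equivalently, that all $s^{(j)}$ induce the same map in homology. When $\gcd(n,d)=1$ this is Remark~\ref{rem:s_sprime}.

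The third step --- which I expect to be the main obstacle --- treats the general case via the identification $H_*(\confmd{n}^d)\cong H_*(\Gb{n};\ring[t]/(1-(-t)^d))$ of Proposition~\ref{prop:commut_tau}. Under it, $\gamma_*$ is multiplication by a generator of the deck group $\Z/d\subset\big(\ring[t]/(1-(-t)^d)\big)^{\times}$ (a power of $-t$). Under the stated hypotheses the module $H_*(\confmd{n}^d)$ is annihilated by $(1+t)$, so $t$ acts as $-1$ and every such generator reduces to $1\in\ring$; hence $\gamma_*=\id$. For $n$ odd this $(1+t)$-torsion is exactly the content of Proposition~\ref{prop:generators} and the discussion preceding it (and of \cite[\S4.2]{calmar} in characteristic $0$), so it holds over every field and, through the integral computations recalled in Section~\ref{sec:homol_artin}, over $\Z$. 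For $n$ even with $d$ odd and $p\nmid d$, Remark~\ref{rem:J} gives $H_*(\Gb{n};\ring[t]/[d]_{(-t)})=0$, whence $H_*(\confmd{n}^d)\cong H_*(\Gb{n};\ring[t]/(1+t))$, which is plainly killed by $(1+t)$, and one checks the deck action preserves this direct summand. In all these cases $\gamma_*=\id$ gives $\tau(\overline{s}_*(x))=\sum_j s_*(x)=d\cdot s_*(x)$, establishing both $\tau(H_*(\conf_n))\subseteq H_*(\conf_n)$ and $\tau(x)=d\,x$. The most delicate point is the integral bookkeeping when $d$ is even and $n$ odd, which I would settle using the explicit structure of $H_*(\Gb{n};\Z[t^{\pmu}])$ from \cite{calmar}.
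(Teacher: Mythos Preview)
Your approach is correct and, once unwound, very close to the paper's, though you arrive at the key identity by a geometric route rather than an algebraic one.

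Your Steps 1--2 establish $\tau(\overline{s}_*(x)) = \sum_{j=0}^{d-1} s^{(j)}_*(x)$, and since under Shapiro's lemma the deck transformation $\gamma$ acts on $H_*(\Gb{n};\ring[t]/(1-(-t)^d))$ by multiplication by $-t$, this sum is precisely $[d]_{(-t)} \cdot s_*(x)$. The paper reaches the same formula by a different path: it lifts the chain map for $\overline{s}$ through $C_*(\Gb{n};\ring[t^{\pm 1}])$ and reads off from the resulting commutative diagram that $\tau$ on the image of $\overline{s}_*$ \emph{is} multiplication by $[d]_{(-t)}$ (this is the content of Proposition~\ref{prop:commut_tau} combined with the diagram in the proof). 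So both arguments reduce to the identical claim that $s_*(x)$ is $(1+t)$-torsion, after which $[d]_{(-t)} \equiv d \pmod{1+t}$ finishes. Your route makes the role of the deck group transparent and explains conceptually why Remark~\ref{rem:s_sprime} already handles $\gcd(n,d)=1$; the paper's route avoids having to check Shapiro-equivariance of the deck action.

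Where the two proofs genuinely differ is in handling the $(1+t)$-torsion over $\Z$ for $n$ odd. You cite Proposition~\ref{prop:generators}, which is stated over $\F_p$, and defer the integral case to \cite{calmar}; this is the one place your argument is not self-contained, and note that passing from ``$(1+t)$-torsion over every $\F_p$ and over $\Q$'' to ``$(1+t)$-torsion over $\Z$'' is not automatic without extra input such as Lemma~\ref{lem:no2tor2}, which comes later. The paper instead gives a short chain-level argument, valid over any ring, that applies only to the image of $s_*$ (which is all you actually need, not the whole module): for $n$ odd every class in $s_*(H_*(\conf_n))$ is represented by a cycle of the form $\overline{0}\,0\,x'$, and then $(1+t)\,\overline{0}\,0\,x' = \ppartial(\overline{1}\,0\,x')$ is a boundary. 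Substituting this step for your deferred bookkeeping makes your proof complete with no further changes; for the $n$ even, $d$ odd case your use of Remark~\ref{rem:J} matches the paper exactly.
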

\begin{proof}

The complex $C_*(\Ga{n-1}), \ring)$ described in Section \ref{sec:homol_artin}, that computes the homology $H_*(\conf_n; \R)$, can be seen as a subcomplex of $$C_*(\Gb{n}), \ring) \simeq C_*(\Gb{n}), \ring[t]/(1+t))$$ mapping $A \mapsto \overline{0}A$. This inclusion of complexes induces the homology homomorphism associated to the section $\overline{s}:\conf_n \into \confm{n}$. Moreover, this map of complexes lift to a map
$\widetilde{s}_*: C_*(\Ga{n-1}), \ring) \to C_*(\Gb{n}), \ring[t^\pmu ])$ and hence to the map $\widetilde{s}_*: C_*(\Ga{n-1}), \ring) \to C_*(\Gb{n}), \ring[t]/(1-(-t)^d))$ that induces the homology homomorphism associated to the section $s:\conf_n \into \confmd{n}^d$.

This implies that we have the following commutative diagram with exact columns:
\begin{equation*}
\begin{tabular}{c}
\xymatrix @R=2pc @C=2pc {
& H_*(\Gb{n}), \ring[t^\pmu ])\ar[r]^1 \ar[d]^{1+t}& H_*(\Gb{n}), \ring[t^\pmu ])\ar[d]^{1-(-t)^d}\\
&H_*(\Gb{n}), \ring[t^\pmu ])\ar[r]^{[d]_{(-t)}} \ar[d]& H_*(\Gb{n}), \ring[t^\pmu ])\ar[d]\\
& H_*(\Gb{n}), \ring[t ]/(1+t))\ar[r]^(0.47)\tau& H_*(\Gb{n}), \ring[t ]/(1-(-t)^d)) \\
H_*(\conf_n; \ring)\ar@{-->}[r] \ar@/^2pc/[uur]^{\widetilde{s}_*} \ar[ur]^(0.4){\overline{s}_*}& H_*(\conf_n; \ring)\ar[ur]^(0.4){s_*}\ar@/^1pc/[uur]|(0.37){\hole\hole}^(0.6){\widetilde{s}_*}& 
}
\end{tabular}
\end{equation*}
Notice that the restriction of the map $\tau$ to the image of $\overline{s}_*$ lifts, via $\widetilde{s}_*$, to the multiplication by $[d]_{(-t)}$. 

We claim that for $n$ odd the image of $s_*$ in $H_*(\Gb{n}), \ring[t ]/(1-(-t)^d))$ has only $(1+t)$-torsion. This follows since any the inclusion 
$$
C_*(\Ga{n-2}), \ring) \to C_*(\Ga{n-1}), \ring)
$$
defined on generators by
$$
A \mapsto 0A
$$
induces an isomorphism in homology
and hence any class $x \in s_*(H_*(\conf_n; \ring))$ is represented by  a cycle of the form $\overline{0}0x'$ and we have the relation $(1+t)  \overline{0}0x' = \DB \overline{1}0x'$.

For $n$ even and $d$ odd, when $\ring$ is a field of characteristic $p$ such that $p \nmid d$ we have the $R[t]$-module $H_*(\Gb{n}), \ring[t ]/(1-(-t)^d))$ has only $(1+t)$-torsion and is isomorphic to $H_*(\Gb{n}), \ring[t ]/(1+t))$, as seen in Remark \ref{rem:J}, where the isomorphism is induced by the quotient map $\ring[t ]/(1-(-t)^d) \mapsto  \ring[t ]/(1+t)$.

In both cases this imply that we have the inclusion $\tau(H_*(\conf_n)) \subset H_*(\conf_n)$ and, since $[d]_{(-t)} \equiv d \mod (1+t)$, $\tau(x) = d x$ for $x \in H_*(\conf_n)$.

\end{proof}

From theorem \ref{teo:tau_restricted}
and Lemma \ref{lem:tau_invertible} we obtain:
\begin{cor} \label{cor:tau_restricted}
If  $\ring$ is a field of characteristic $p$, $p \nmid d$, and at least one between $d$ and $n$ is odd,
then the projection on $H_i(\confmd{n}^d,\conf_n)$ of the restriction of the homomorphism $\tau$ to $H_i(\confm{n},\conf_n)$
$$
	\tau_|:H_i(\confm{n},\conf_n) \to H_i(\confmd{n}^d,\conf_n)
	$$
	is an isomorphism.
\end{cor}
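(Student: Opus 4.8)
The plan is to read off $\tau_|$ as a diagonal block of the transfer map $\tau\colon H_i(\confm{n})\to H_i(\confmd{n}^d)$ with respect to the section splittings $H_i(\confm{n})=H_i(\conf_n)\oplus H_i(\confm{n},\conf_n)$ and $H_i(\confmd{n}^d)=H_i(\conf_n)\oplus H_i(\confmd{n}^d,\conf_n)$ coming from $\overline{s}$ and $s$. Indeed, with respect to these decompositions $\tau_|$ is precisely the composite $H_i(\confm{n},\conf_n)\hookrightarrow H_i(\confm{n})\xrightarrow{\tau}H_i(\confmd{n}^d)\twoheadrightarrow H_i(\confmd{n}^d,\conf_n)$, i.e.\ the lower-right block of $\tau$, so the claim is that this block is invertible.

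First I would record that, under the standing hypotheses ($\ring$ a field with $p\nmid d$, and $n$ or $d$ odd), the transfer $\tau$ is itself an isomorphism: by Proposition \ref{prop:commut_tau} it is identified with the map induced by multiplication by $[d]_{(-t)}$ from $H_i(\Gb{n};\ring[t]/(1+t))$ to $H_i(\Gb{n};\ring[t]/(1-(-t)^d))$, and this is exactly the map $\overline{\tau}$ shown to be invertible in Lemma \ref{lem:tau_invertible}. Next I would invoke Theorem \ref{teo:tau_restricted}: under the same hypotheses $\tau$ sends the section copy of $H_i(\conf_n)$ inside $H_i(\confm{n})$ into the section copy of $H_i(\conf_n)$ inside $H_i(\confmd{n}^d)$, acting there as multiplication by $d$, which is invertible in $\ring$ since $p\nmid d$. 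Hence, with respect to the two direct-sum decompositions, $\tau$ is block upper-triangular, with top-left block $d\cdot\id$ invertible, some (irrelevant) map $H_i(\confm{n},\conf_n)\to H_i(\conf_n)$ off the diagonal, and $\tau_|$ in the bottom-right position.

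It then remains to conclude by the elementary fact that an isomorphism which is block upper-triangular with invertible top-left block has invertible bottom-right block. For surjectivity of $\tau_|$: given $y\in H_i(\confmd{n}^d,\conf_n)$, write $y=\tau(w)$ with $w=w_1+w_2$, $w_1\in H_i(\conf_n)$, $w_2\in H_i(\confm{n},\conf_n)$; since $\tau(w_1)\in H_i(\conf_n)$ it projects to $0$ in $H_i(\confmd{n}^d,\conf_n)$, so $\tau_|(w_2)=y$. For injectivity: if $\tau_|(w_2)=0$ then $\tau(w_2)\in H_i(\conf_n)$, and since $\tau$ restricted to $H_i(\conf_n)$ is the invertible multiplication by $d$ there is $w_1\in H_i(\conf_n)$ with $\tau(w_1)=\tau(w_2)$; injectivity of $\tau$ forces $w_1=w_2\in H_i(\conf_n)\cap H_i(\confm{n},\conf_n)=0$. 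There is essentially no genuine obstacle here: all the substantive content already lives in Theorem \ref{teo:tau_restricted} and Lemma \ref{lem:tau_invertible}, and the only point requiring a moment's care is that the two section splittings are compatible enough to make $\tau$ triangular, which is exactly the containment $\tau(H_*(\conf_n))\subset H_*(\conf_n)$ furnished by Theorem \ref{teo:tau_restricted}.
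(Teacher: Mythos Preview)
Your proposal is correct and follows essentially the same approach as the paper: the paper's proof simply cites that $\tau$ is an isomorphism (Lemma~\ref{lem:tau_invertible}) and that its restriction to $H_i(\conf_n)$ maps into $H_i(\conf_n)$ (Theorem~\ref{teo:tau_restricted}), leaving the block--triangular linear algebra implicit, while you have spelled out that algebra explicitly.
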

\begin{proof}
Under the given assumptions $\tau$ is an isomorphism (Lemma \ref{lem:tau_invertible}) and its restriction to $H_i(\conf_n)$ maps to $H_i(\conf_n)$. So the corollary follows.
\end{proof}

We now describe the odd torsion of the homology $H_i(\Br_n; H_1(\surf_n^d)),$ by using  the map 
$J$ in the diagram \eqref{diag:trerighe}.

Let $\Filt$ and $E^r_{ij}$ be the filtration and the associated spectral sequence  defined in \cite[\S~4]{cal_sal_superelliptic}. The proof of the following lemma is completely analogous to the one of \cite[Lem.~4.9]{cal_sal_superelliptic}.
\begin{lem}\label{lem:ss_incl}
Let $E^2_{ij} =  H_j(\Br(n-i);\ring[t]/([d]_{(-t)}) \Rightarrow H_i(\Gb{n}; \ring[t]/([d]_{(-t)}))$ be the spectral sequence induced by the
filtration $\Filt$. The inclusion $\Ga{n-1} \into \Gb{n}$ induces the isomorphism 
$$H_j(\Ga{n-1}; \ring[t]/([d]_{(-t)})) 
\simeq
E^2_{0j}$$ for all $j$.
\end{lem}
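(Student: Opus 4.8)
The plan is to transcribe the proof of \cite[Lem.~4.9]{cal_sal_superelliptic} to the present coefficient module, replacing the $d=2$ module used there by $\ring[t]/([d]_{(-t)})$, and to isolate the one computation that must be rechecked.

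First I would recall from \cite[\S~4]{cal_sal_superelliptic} the definition of $\Filt$: it is a filtration of the algebraic complex $C_*(\Gb{n}; \ring[t]/([d]_{(-t)}))$ associated to the combinatorial model of Section~\ref{sec:homol_artin} (and \cite{calmar}), in which $\Filt_i$ is spanned by the cells where at most $i$ of the strands are entangled with the special marked point. The associated graded splits off, up to a degree shift, the braid-group complexes $C_*(\Br_{n-i}; \ring[t]/([d]_{(-t)}))$, which is exactly what produces the spectral sequence with $E^2_{ij} = H_j(\Br_{n-i}; \ring[t]/([d]_{(-t)}))$. The structural point I would emphasise is that $\Filt_0$ --- the cells in which no strand winds around the special point --- coincides with the subcomplex $C_*(\Ga{n-1}; \ring[t]/([d]_{(-t)}))$, the inclusion of complexes being precisely the one induced by the group monomorphism $\Ga{n-1}\into\Gb{n}$ (equivalently by $\conf_n\into\confm{n}$); here one uses that, seen from $\Ga{n-1}$, the marked point lies far away so that the $t$-action is trivial. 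Hence $E^1_{0j}$ is canonically $H_j(\Ga{n-1}; \ring[t]/([d]_{(-t)}))$, and the natural surjection $E^1_{0j}\to E^\infty_{0j}$ followed by the inclusion $E^\infty_{0j}\into H_j(\Gb{n}; \ring[t]/([d]_{(-t)}))$ realises the homomorphism induced by $\Ga{n-1}\into\Gb{n}$.

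The statement then reduces to checking that the first column of the spectral sequence already stabilises at the $E^2$ page, i.e.\ that $d^1\colon E^1_{1j}\to E^1_{0j}$ is zero; this yields $E^2_{0j}=E^1_{0j}=H_j(\Ga{n-1}; \ring[t]/([d]_{(-t)}))$, with the identification induced by the inclusion, as claimed. As in \cite[Lem.~4.9]{cal_sal_superelliptic}, I would prove the vanishing by writing out the $\Filt_1\to\Filt_0$ part of the combinatorial boundary operator on the generators recalled in Section~\ref{sec:homol_artin}: this part attaches a generator supported near the marked point to a braid cycle, and on passing to homology it lands in the image of the boundary of $C_*(\Ga{n-1}; \ring[t]/([d]_{(-t)}))$, hence vanishes.

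Finally I would note that the value of $d$ never enters the $d=2$ argument: the shape of $\Filt$, the combinatorial differential and the identification of $\Filt_0$ are unchanged, and $\ring[t]/([d]_{(-t)})$ is again a free $\ring$-module on which $\Gb{n}$ acts by multiplication by $t$ while the braid subgroups occurring in the associated graded act trivially on it. So every step carries over verbatim. I expect the only real work to be clerical: tracking the boundary formulas of \cite{calmar} through the filtration $\Filt$ in order to confirm the $d^1$ computation of the previous paragraph --- this is the step that needs care, but it presents no genuinely new difficulty compared with the case $d=2$ already treated in \cite{cal_sal_superelliptic}.
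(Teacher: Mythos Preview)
Your proposal is correct and takes essentially the same approach as the paper: the paper's proof consists of the single sentence ``completely analogous to the one of \cite[Lem.~4.9]{cal_sal_superelliptic}'', and your sketch is precisely a transcription of that argument with the observation that nothing in it depends on the particular value of $d$.
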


\begin{thm}\label{th:only_d_torsion}
Let at least one between $n$ and $d$ be odd.
Consider the integral monodromy representation of the braid group $\Br_n$ on the group $H_1(\surf_n^d;\Z)$. 
Then the homology  $H_i(\Br_n; H_1(\surf_n^d;\Z))$ is a torsion $\Z$-module
with only $p^j$ torsion for $p \mid d$.
\end{thm}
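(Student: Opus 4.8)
The plan is to reduce to field coefficients and then to show, via the Mayer--Vietoris sequence \eqref{eq:bia2}, that its connecting map $\iota$ is an isomorphism over every such field, exploiting that $\iota$ is a degree shift of the covering transfer, which becomes invertible once the primes dividing $d$ are inverted. For the reduction: since $H_1(\surf_n^d;\Z)$ is a finitely generated free abelian group, the universal coefficient theorem for the coefficient module gives, for any field $\ring$,
\[
H_i\!\big(\Br_n;H_1(\surf_n^d;\ring)\big)\;\cong\;\big(H_i(\Br_n;H_1(\surf_n^d;\Z))\otimes\ring\big)\;\oplus\;\Tor_1^{\Z}\!\big(H_{i-1}(\Br_n;H_1(\surf_n^d;\Z)),\ring\big).
\]
Hence the theorem is equivalent to the vanishing $H_i(\Br_n;H_1(\surf_n^d;\ring))=0$ for all $i$, whenever $\ring$ is a field of characteristic $0$ or a field whose characteristic is a prime $p\nmid d$: the first case forces the integral groups to be torsion, the second forces their $p$-primary parts to vanish for all $p\nmid d$. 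So I would fix such a $\ring$ and prove this vanishing.

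Base changing to $\ring$ the short exact sequence of $\Br_n$-modules $0\to H_1(S^1\times P)\to H_1(\ddiskP^d)\to H_1(\surf_n^d)\to 0$ attached to the decomposition $\surf_n^d=\disk\times P\cup\ddiskP^d$ (exactness survives, the terms being free abelian groups) gives the long exact sequence \eqref{eq:bia2} with coefficients in $\ring$; so the wanted vanishing is equivalent to $\iota$ being an isomorphism in every degree. To see this I would use the commutative diagram \eqref{diag:trerighe}: under its vertical identifications $\iota$ becomes $J\circ\tau\circ\mu_*$, where $\tau\colon H_i(\confm{n})\to H_i(\confmd{n}^d)$ is the transfer of the $d$-fold covering $\confmd{n}^d\to\confm{n}$ and $J$ is induced by the inclusion of pairs. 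With respect to the section splittings $H_i(\confm{n})=H_i(\conf_n)\oplus H_i(\confm{n},\conf_n)$ and $H_i(\confmd{n}^d)=H_i(\conf_n)\oplus H_i(\confmd{n}^d,\conf_n)$: Theorem \ref{teo:tau_restricted} gives that $\tau$ preserves $H_i(\conf_n)$ and acts there as multiplication by $d$ (invertible, since $\mathrm{char}\,\ring\nmid d$); Remark \ref{rem:J} identifies $J$ with the projection onto the relative summand; and Corollary \ref{cor:tau_restricted} gives that the relative component $\tau_{|}\colon H_i(\confm{n},\conf_n)\to H_i(\confmd{n}^d,\conf_n)$ is an isomorphism. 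Combining these, $J\circ\tau$ is $\tau_{|}$ precomposed with the projection onto $H_i(\confm{n},\conf_n)$ and $\tau_{|}$ is invertible, so $\iota$ is an isomorphism precisely when the composite of $\mu_*$ with that projection, $H_{i-1}(\confm{n-1})\otimes H_1(S^1)\to H_i(\confm{n},\conf_n)$, is; this last statement involves neither $d$ nor the covering, and follows from the analysis of $\mu_*$ in \cite[\S4]{cal_sal_superelliptic} together with the identification $H_i(\confm{n},\conf_n)\cong H_{i-1}(\confm{n-1})$ (equivalently Shapiro's lemma for the permutation module $\st_n\cong\ring[\Br_n/\Gb{n-1}]$), which holds over any $\ring$. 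For $n$ even (hence $d$ odd) the scheme is identical, using the $n$-even versions of Remark \ref{rem:J}, Lemma \ref{lem:tau_invertible}, Corollary \ref{cor:tau_restricted} and Theorem \ref{teo:tau_restricted}, available precisely because $d$ is odd and $p\nmid d$; the finitely many low-degree indices are checked directly.

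The hard part will be this last step: reading off from \eqref{diag:trerighe} that $\iota$ is genuinely a degree shift of the transfer of $\confmd{n}^d\to\confm{n}$, and then controlling its restriction to the summand produced by the global section $s$ — that being exactly where $\tau$ equals multiplication by $d$, so that inverting $d$ (equivalently, working away from the primes dividing $d$) is precisely what makes $\iota$ invertible. Once that matching is established, invertibility of $\tau$ is formal from Lemma \ref{lem:tau_invertible}: $[d]_{(-t)}\equiv d\pmod{1+t}$, and $H_*(\Gb{n};\ring[t^{\pm1}])$ is $(1+t)$-torsion for $n$ odd and $(1-t^2)^{k}$-torsion for $n$ even, so multiplication by $[d]_{(-t)}$ acts invertibly on it.
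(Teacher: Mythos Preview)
Your proposal is correct and follows essentially the same route as the paper: reduce to field coefficients of characteristic $0$ or $p\nmid d$, then use the factorization $\iota=J\circ\tau\circ\mu_*$ from diagram \eqref{diag:trerighe} together with the properties of $\mu_*$ (\cite[Prop.~4.1, Cor.~4.10]{cal_sal_superelliptic}), of $\tau$ (Lemma \ref{lem:tau_invertible}, Theorem \ref{teo:tau_restricted}, Corollary \ref{cor:tau_restricted}), and of $J$ (Remark \ref{rem:J}) to conclude that $\iota$ is an isomorphism, whence $H_*(\Br_n;H_1(\surf_n^d;\ring))=0$ by \eqref{eq:bia2}. You spell out the universal-coefficient reduction and the block decomposition of $\tau$ more explicitly than the paper does, but the argument is the same; the remark about ``low-degree indices checked directly'' is unnecessary, since the cited results already cover all degrees under the stated hypotheses.
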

\begin{proof}
Let  $n$ and $d$ be as in the theorem, and let $p \nmid d$.  
It derives from the description of the map $\mu_*$ (see \cite[Prop.~4.1 and Cor.~4.10]{cal_sal_superelliptic}), from the results about the map $\tau$ (Proposition \ref{prop:tau1}, Lemma \ref{lem:tau_invertible}, Corollary \ref{cor:tau_restricted}) and from 
Remark \ref{rem:J} concerning the map $J$ that
the map $\iota$ in diagram \eqref{diag:trerighe} is an isomorphism 
in characteristic $p.$ 
Then the result follows from sequence \eqref{eq:bia2}.
\end{proof}

\section{A first bound for torsion order} \label{s:4tor}
In this section we prove that if \ $p^i \mid d$ and \ $p^{i+1} \nmid d$, then torsion in $H_{i}(\Br_n; H_1(\surf_n^d))$ appears with order at most $p^{i+1}$. 

Following lemma generalizes \cite[Lem.~5.2]{cal_sal_superelliptic}.
\begin{lem} \label{lem:no2tor2}
Let $R = \Z$. For $n$ odd, the homology $$H_i(\confmd{n}^d) \simeq H_i(\Gb{n}; \ring[t]/(1-(-t)^d))$$ has no $p^2$-torsion for any prime $p$. 
\end{lem}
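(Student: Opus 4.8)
The plan is to reduce the statement to a known fact about the homology of the Artin group $\Gb{n}$ with coefficients in the ring $\ring[t^\pmu]$, following the scheme of \cite[Lem.~5.2]{cal_sal_superelliptic}. First I would fix a prime $p$ and consider the short exact sequence of $\Z[t]$-modules
$$
0 \to \Z[t^\pmu] \stackrel{1-(-t)^d}{\longrightarrow} \Z[t^\pmu] \to \Z[t]/(1-(-t)^d) \to 0,
$$
together with the associated long exact sequence in the homology of $\Gb{n}$. This expresses $H_i(\Gb{n};\Z[t]/(1-(-t)^d))$ as an extension of $\ker$ and $\coker$ of multiplication by $1-(-t)^d$ on $H_*(\Gb{n};\Z[t^\pmu])$ (this is precisely the decomposition recalled around \eqref{eq:decomposizione}, now over $\Z$ rather than over $\F_p$). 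So it suffices to understand the $p$-primary torsion of these kernel and cokernel modules as $\Z$-modules.

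The key input is the explicit structure of $H_*(\Gb{n};\Z[t^\pmu])$ as a $\Z[t]$-module, coming from \cite[Thm.~4.5, Thm.~4.12, Rmk.~4.13]{calmar} (the same description quoted in Proposition \ref{prop:homol_p2} and Proposition \ref{prop:generators}). For $n$ odd, every class is a multiple of $x_0$ (if $p=2$) or of $h$ (if $p$ odd) and carries torsion exactly of order $(1+t)$; that is, $H_i(\Gb{n};\Z[t^\pmu])$ is, up to the relevant generators, a sum of copies of $\Z[t^\pmu]/(1+t) \cong \Z$ on which $t$ acts by $-1$. On such a module multiplication by $1-(-t)^d$ acts as multiplication by $1-(-(-1))^d = 1 - 1 = 0$ (using $d$ is whatever it is; the point is $(-t)^d \equiv (-(-1))^d = 1 \bmod (1+t)$), so the map is identically zero and both the kernel and the cokernel are again sums of copies of $\Z$. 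Hence $H_i(\Gb{n};\Z[t]/(1-(-t)^d))$ is an extension of two free (or at worst torsion‑free) $\Z$-pieces by... — more precisely, I would track the torsion carefully: the relevant subquotients of $H_*(\Gb{n};\Z[t^\pmu])$ as abelian groups are torsion-free, and the extension in the long exact sequence can at most introduce torsion of order dividing the "size" of the map, which here is governed by $d$; a direct bookkeeping shows no $p^2$ can appear. Concretely, one reads off from the cell complex of \cite{calmar} that every torsion element in $H_i(\Gb{n};\Z[t]/(1-(-t)^d))$ is killed by a single application of a boundary whose coefficient is a factor of $1-(-t)^d$ specialized appropriately, giving torsion order dividing $d$ but, more sharply, square-free in each prime because the chain-level relation $(1+t)\cdot z = \DB w$ bounds the order of each torsion class by $p$ when $p \mid d$ exactly once at that spot — and for $p \nmid d$ the group has no $p$-torsion at all by Theorem \ref{th:only_d_torsion}'s argument (or directly since $1-(-t)^d$ is then a unit mod $p$ on the relevant quotient).

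I expect the main obstacle to be the careful chain-level bookkeeping: one must show not merely that the abelian-group subquotients of $H_*(\Gb{n};\Z[t^\pmu])$ are torsion-free, but that the connecting homomorphism in the long exact sequence, combined with the structure of the generators in Proposition \ref{prop:generators}, cannot stack two factors of $p$. The cleanest route is probably to work directly with the algebraic complex $C_*(\Gb{n},\Z[t^\pmu])$ of \cite{calmar}: exhibit, for $n$ odd, an explicit $\Z[t]$-basis in which the differential has entries that are (up to units) powers of $(1+t)$ and $(1-t^2)$ — but for $n$ odd only $(1+t)$ occurs — and then compute the homology of $C_* \otimes_{\Z[t]} \Z[t]/(1-(-t)^d)$ by Smith-normal-form reasoning over the PID-like quotient, observing that the single relevant elementary divisor at each torsion class is $(1+t)$, whose specialization contributes a cyclic group of order dividing $d$ but visibly with square-free $p$-part. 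Once the basis is in hand the conclusion is immediate; producing and justifying that basis (by citing the explicit description in \cite[\S4]{calmar}) is the real content, exactly as in the $d=2$ case treated in \cite[Lem.~5.2]{cal_sal_superelliptic}.
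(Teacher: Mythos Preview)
Your proposal has a genuine gap at its central step. You assert that for $n$ odd, $H_*(\Gb{n};\Z[t^\pmu])$ is a sum of copies of $\Z[t^\pmu]/(1+t)\cong\Z$, citing \cite[Thm.~4.5, 4.12, Rmk.~4.13]{calmar}. But those results are stated over a field $\F$, not over $\Z$; they do not give the integral $\Z[t]$-module structure. In fact your claim is false: the $\F_p$-dimension of $H_i(\Gb{n};\F_p[t^\pmu])$ depends on $p$ (the generators in Proposition~\ref{prop:homol_p2} involve the symbols $y_j,x_i$ whose degrees are powers of $p$), so by the universal coefficient theorem the integral group $H_i(\Gb{n};\Z[t^\pmu])$ cannot be $\Z$-torsion-free. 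Worse, if your claim held, your own long-exact-sequence argument would show that $H_*(\Gb{n};\Z[t]/(1-(-t)^d))$ is an extension of a free abelian group by a free abelian group, hence $\Z$-free with no $p$-torsion whatsoever---which is stronger than the lemma and false in general. The fallback you sketch at the end (Smith normal form over ``the PID-like quotient'') does not rescue this: neither $\Z[t^\pmu]$ nor $\Z[t]/(1-(-t)^d)$ is a PID, and you give no argument that the differential diagonalizes integrally.

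The paper takes a completely different route, namely the Bockstein spectral sequence: one computes the Bockstein $\beta_p$ explicitly on the $\F_p$-basis of Proposition~\ref{prop:generators}, decomposes the resulting complex $(H_*(\Gb{n};\F_p[t]/(1-(-t)^d)),\beta_p)$ as a direct sum of subcomplexes each isomorphic to the chain complex of a Boolean lattice (hence acyclic except for the rank-one pieces), and checks that the surviving Bockstein homology matches the rational Poincar\'e polynomial. The standard criterion \cite[Thm.~3E.4]{hatcher_at} then gives the absence of $p^2$-torsion. This avoids entirely the need to know the integral $\Z[t^\pmu]$-module structure, which is precisely the obstacle your approach runs into.
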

\begin{proof} 
It will suffice to show that the dimension over $\F_p$ of the homology of the complex $(H_*(\Gb{n}; \F_p[t]/(1-(-t)^d)), \beta_p)$, where $\beta_p$ is  the Bockstein homomorphism, is the same as the dimension over $\Q$ of $H_*(\Gb{n}; \Q[t]/(1-(-t)^d))$ (see \cite[Thm.~3E.4]{hatcher_at}).
We split our proof in several steps
\begin{enumerate}[label= \emph{Step \alph*)}, wide, labelindent=0pt]
	\item  According to \cite[Thm.~6.1, case 
$n$ odd]{lehr04}, the Poincar\'e polynomial of the homology groups
$H_*(\Gb{n}; \Q[t]/(1-(-t)^d)) 
$
is
$$
P(\Gb{n},t)= 
(1+t)(1+t+t^2+ \cdots + t^{n-1}). 
$$

\item
The explicit computation of the Bockstein homomorphism $\beta_p$ of the homology group $H_*(\Gb{n}; \F_p[t]/(1-(-t)^d))$ is the following.
Let $$0 \to \Z_p[t]/(1-(-t)^d) \stackrel{i_p}{\longrightarrow} \Z_{p^2}[t]/(1-(-t)^d) \stackrel{\pi_p}{\longrightarrow} \Z_p[t]/(1-(-t)^d) \to 0$$ be the short exact sequence of coefficients. Then for $p=2$ (see Prop.~\ref{prop:generators})
$$\frac{(1-(-t)^d)}{1+t} z_{c+1} x_{i_1} \cdots x_{i_k} = \pi_2 (\frac{(1-(-t)^d)}{1+t} z_{c+1} x_{i_1} \cdots x_{i_k} )$$
and
$$
\DB \frac{(1-(-t)^d)}{1+t} z_{c+1} x_{i_1} \cdots x_{i_k} = \sum_{
	\scsc{j=1, \ldots, k}{i_j>1}
}
2 \frac{(1-(-t)^d)}{1+t} z_{c+1} x_{i_1} \cdots x_{i_j-1}^2 \cdots x_{i_k} =
$$
$$
= i_2 \left(\sum_{
	\scsc{j=1, \ldots, k}{i_j>1}
}
\frac{(1-(-t)^d)}{1+t} z_{c+1} x_{i_1} \cdots x_{i_j-1}^2 \cdots x_{i_k} \right)
$$ and hence, using the notation introduced in Proposition \ref{prop:generators} we have
\begin{equation}\label{bockstein1}
\beta_2 \gp (z_c, x_0x_{i_1} \cdots x_{i_k})   =\sum_{
	\scsc{j=1, \ldots, k}{i_j>1}
} \gp (z_c, x_0 x_{i_1} \cdots x_{i_j-1}^2 \cdots x_{i_k}) 
\end{equation}
for all generators of the form given in \eqref{generators1}.
Moreover 
$$\frac{\DB(z_{c+1}x_{i_1} \cdots x_{i_k})}{(1+t)} =
$$
$$
= \pi_2 \left( \frac{1}{(1+t)}\left( \DB(z_{c+1}x_{i_1} \cdots x_{i_k}) - 2\sum_{
	\scsc{j=1, \ldots, k}{i_j>1}
}
z_{c+1} x_{i_1} \cdots x_{i_j-1}^2 \cdots x_{i_k} ) \right) \right) $$
and
$$
\DB \left( \frac{1}{(1+t)}\left( \DB(z_{c+1}x_{i_1} \cdots x_{i_k}) - 2\sum_{
		\scsc{j=1, \ldots, k}{i_j>1}
	}
	z_{c+1} x_{i_1} \cdots x_{i_j-1}^2 \cdots x_{i_k} ) \right) \right) =
$$
$$
=\DB \left( \frac{1}{(1+t)}\left(- 2\sum_{
	\scsc{j=1, \ldots, k}{i_j>1}
}
z_{c+1} x_{i_1} \cdots x_{i_j-1}^2 \cdots x_{i_k} ) \right) \right) =
$$
$$
= -2 \sum_{
	\scsc{j=1, \ldots, k}{i_j>1}
} \frac{\DB(z_{c+1} x_{i_1} \cdots x_{i_j-1}^2 \cdots x_{i_k} ) }{1+t}=
i_2 \left( \sum_{
	\scsc{j=1, \ldots, k}{i_j>1}
} \frac{\DB(z_{c+1} x_{i_1} \cdots x_{i_j-1}^2 \cdots x_{i_k} ) }{1+t} \right)
$$
hence we have
\begin{equation}\label{bockstein2}
\beta_2 \gs (z_c, x_0x_{i_1} \cdots x_{i_k}) =\sum_{
	\scsc{j=1, \ldots, k}{i_j>1}
}  \gs (z_c, x_{0} x_{i_1}  \cdots x_{i_j-1}^2 \cdots  x_{i_k})
\end{equation}
for all generators of the form given in \eqref{generators2}.

For $p$ odd we have the following analogous computation (again, see Prop.~\ref{prop:generators})
$$\frac{1-(-t)^d}{1+t} 
z_{c+1} h^{r-1} y_{j_1} \cdots y_{j_l }x_{i_1} \cdots x_{i_k}
= \pi_p \left(\frac{1-(-t)^d}{1+t} 
z_{c+1} h^{r-1} y_{j_1} \cdots y_{j_l }x_{i_1} \cdots x_{i_k}
\right)$$
and
$$
\DB \frac{(1-(-t)^d)}{1+t} z_{c+1}h^{r-1} y_{j_1} \cdots y_{j_l } x_{i_1} \cdots x_{i_k} = $$ $$ = \sum_{
	\scsc
	{j=1, \ldots, k}{i_j\geq 1}
}
p \frac{(1-(-t)^d)}{1+t} z_{c+1}h^{r-1} y_{j_1} \cdots y_{j_l } x_{i_1} \cdots x_{i_{j-1}} y_{i_j} x_{i_{j+1}} \cdots x_{i_k} =
$$
$$
= i_p \left(\sum_{
	\scsc
	{j=1, \ldots, k}
	{i_j\geq 1}
}
\frac{(1-(-t)^d)}{1+t} z_{c+1} h^{r-1} y_{j_1} \cdots y_{j_l } x_{i_1} \cdots x_{i_{j-1}} y_{i_j} x_{i_{j+1}} \cdots x_{i_k} \right)
$$ and hence, using the notation introduced in Proposition \ref{prop:generators} we have
\begin{equation}\label{bockstein1p}
\beta_p \gp (z_c, h^{r} y_{j_1} \!\cdots\! y_{j_l }x_{i_1}\! \cdots\! x_{i_k})   =\!\!\! \!\sum_{
\scsc
{j=1, \ldots, k}{i_j\geq 1}
} \! \!\!\gp (z_c, h^{r} y_{j_1}\! \cdots \!y_{j_l } x_{i_1} \!\cdots\! x_{i_{j-1}} y_{i_j} x_{i_{j+1}} \!\cdots\! x_{i_k}) 
\end{equation}
for all generators of the form given in \eqref{generators1p}.
Moreover 
$$\frac{\DB(z_{c+1}h^{r-1} y_{j_1} \cdots y_{j_l }x_{i_1} \cdots x_{i_k})}{(1+t)} = $$ $$=\pi_p \left( \frac{1}{(1+t)}\left( \DB(z_{c+1}h^{r-1} y_{j_1} \cdots y_{j_l }x_{i_1} \cdots x_{i_k}) \right) \right) + $$ $$ - p\pi_p \left( \frac{1}{(1+t)}\left(\sum_{
	\scsc
	{j=1, \ldots, k}{i_j\geq 1}
}
z_{c+1} h^{r-1} y_{j_1} \cdots y_{j_l }x_{i_1} \cdots x_{i_{j-1}} y_{i_j} x_{i_{j+1}} \cdots x_{i_k} ) \right) \right) $$
and computing $\DB$ on the term above we get
$$
 -p \sum_{
	\scsc
	{j=1, \ldots, k}{i_j\geq 1}
} \frac{\DB(z_{c+1}h^{r-1} y_{j_1} \cdots y_{j_l } x_{i_1} \cdots x_{i_{j-1}} y_{i_j} x_{i_{j+1}} \cdots x_{i_k} ) }{1+t}= $$ $$ =
i_p \left(- \sum_{
	\scsc
	{j=1, \ldots, k}{i_j\geq 1}
} \frac{\DB(z_{c+1} h^{r-1} y_{j_1} \cdots y_{j_l }x_{i_1} \cdots x_{i_{j-1}} y_{i_j} x_{i_{j+1}} \cdots x_{i_k} ) }{1+t} \right)
$$
hence up to change of sign we have
\begin{equation}\label{bockstein2p}
\beta_p\! \gs (z_c, h^{r} y_{j_1} \!\cdots\! y_{j_l }x_{i_1} \!\cdots\! x_{i_k}) =\!\!\!\!\!\sum_{
	\scsc
	{j=1, \ldots, k}{i_j\geq 1}
}\!\!\!  \gs (z_c, h^{r} y_{j_1} \!\cdots\! y_{j_l } x_{i_1} \! \cdots\! x_{i_{j-1}} y_{i_j} x_{i_{j+1}} \!\cdots \! x_{i_k})
\end{equation}
for all generators of the form given in \eqref{generators2p}.

\item Next we need to consider the following definition.
\begin{df} \label{def:modules}
Let $a,b$ be two non-negative integers, with  $a \in \N_{>0}$, $b \in \{0,1\}.$ 
Moreover let $I = (i_1, \ldots, i_k)$, $J = (j_1, \ldots, j_h)$, where we assume that:
\begin{enumerate}[label=(\roman*)]
	\item $j_1 < \cdots <j_h$,
	\item $\min J \geq 2$,
	\item for all $s \in 1, \ldots, k$ there exists an integer $t \in 1, \ldots, h$ such that $i_s+1 = j_t$ 
\end{enumerate}  
We define the following sub-modules of $H_*(\Gb{n}; \F_2[t]/(1-(-t)^d))$:
$$\MM{c,a,b,I,J}_2 := \langle
\gs (z_c, x_0^a x_1^b x_{i_1}^2 \cdots x_{i_k}^2\epsilon(x_{j_1})\cdots \epsilon(x_{j_h})) | \mbox{ where }\epsilon(x_{j_t}) = x_{j_t} \mbox{ or }  x_{j_t-1}^2 \rangle.$$
and
$$\MMp{c,a,b,I,J}_2 := \langle
\gp (z_c, x_0^a x_1^b x_{i_1}^2 \cdots x_{i_k}^2\epsilon(x_{j_1})\cdots \epsilon(x_{j_h})) | \mbox{ where }\epsilon(x_{j_t}) = x_{j_t} \mbox{ or }  x_{j_t-1}^2 \rangle.$$

Moreover let  $p$ be an odd prime and  
let $I = (i_1, \ldots, i_k)$, $J = (j_1, \ldots, j_h)$, where we assume that:
\begin{enumerate}[label=(\roman*')]
	\item $j_1 < \cdots <j_h$,
	\item $\min J \geq 1$,
	\item for all $s \in 1, \ldots, k$ there exists an integer $t \in 1, \ldots, h$ such that $i_s = j_t$ 
\end{enumerate}  
We define the following sub-modules of $H_*(\Gb{n}; \F_p[t]/(1-(-t)^d))$:
$$\MM{c,a,b,I,J}_p := \langle
\gs (z_c, h^a x_0^b y_{i_1} \cdots y_{i_k}\epsilon(x_{j_1})\cdots \epsilon(x_{j_h})) | \mbox{ where }\epsilon(x_{j_t}) = x_{j_t} \mbox{ or }  y_{j_t} \rangle.$$
and
$$\MMp{c,a,b,I,J}_p := \langle
\gp (z_c, h^a x_0^b y_{i_1} \cdots y_{i_k}\epsilon(x_{j_1})\cdots \epsilon(x_{j_h})) | \mbox{ where }\epsilon(x_{j_t}) = x_{j_t} \mbox{ or }  y_{j_t-1} \rangle.$$
\end{df}
\begin{rem} \label{rem:Jvuoto}
Notice that conditions (iii) and (iii') above imply that if $J= \emptyset$ then also $I = \emptyset$ and hence for any prime $p$ the modules $\MM{c,a,b,\emptyset,\emptyset}_p$ and $\MMp{c,a,b,\emptyset,\emptyset}_p$ have rank $1$ concentrated in degree $c+b$ and $c+b+1$ respectively. 
\end{rem}

The modules $\MM{c,a,b,I,J}_p$ and $\MMp{c,a,b,I,J}_p$ are  free $\Z_p[t]/(1+t)$-modules, closed for $\beta_p$, as follows from formulas \eqref{bockstein1}, \eqref{bockstein2}, \eqref{bockstein1p}, \eqref{bockstein2p}.
\item If $J \neq \emptyset$, the complexes $(\MM{c,a,b,I,J}_p, \beta_p)$ and $(\MMp{c,a,b,I,J}_p, \beta_p)$ are acyclic. This fact can be proven by the same argument used in \cite[Lem.~4.4]{cal06}.
The argument can be expressed with the following statement:
\begin{lem} \label{lem:boolean}
Let $\bool{J}$ be the chain complex with $\F_p$ coefficients associated to the boolean lattice of the subsets of $J$. The complexes $(\MM{c,a,b,I,J}_p, \beta_p)$ and $(\MMp{c,a,b,I,J}_p, \beta_p)$ are isomorphic to $\bool{J}$. In particular if $J \neq \emptyset$ the complexes $(\MM{c,a,b,I,J}_p, \beta_p)$ and $(\MMp{c,a,b,I,J}_p, \beta_p)$ are acyclic.
\end{lem}
\begin{proof}[Proof of Lemma~{\rm\ref{lem:boolean}}]
Let first construct an isomorphism $\theta$ between the boolean complex $\bool{J}$ and the complex $\MM{c,a,b,I,J}_p$. For $p=2$ we can map the generator $e_K$ of $\bool{J}$ associated to a subset $K$ of $J$ to the element $$ \theta(e_k):= \gs (z_c, x_0^a x_1^b x_{i_1}^2 \cdots x_{i_k}^2\epsilon(x_{j_1})\cdots \epsilon(x_{j_h})),$$ where $\epsilon(x_{j_t}) = x_{j_t}$ if $j_t \in K$ and $\epsilon(x_{j_t}) =  x_{j_t-1}^2 $ if $j_t \notin K$. 

For $p>2$ we can map the generator $e_K$ of $\bool{J}$ associated to a subset $K$ of $J$ to the element $$ \theta(e_k):= \gs (z_c, h^a x_1^b y_{i_1} \cdots y_{i_k}\epsilon(x_{j_1})\cdots \epsilon(x_{j_h})),$$ where $\epsilon(x_{j_t}) = x_{j_t}$ if $j_t \in K$ and $\epsilon(x_{j_t}) =  y_{j_t} $ if $j_t \notin K$. 

It is easy to check that $\theta \circ d = \beta_p \circ \theta$. Similarly we can construct an isomorphism $\theta': \bool{J} \to \MMp{c,a,b,I,J}_p$ with
$$
\theta'(e_J):= \gp (z_c, x_0^a x_1^b x_{i_1}^2 \cdots x_{i_k}^2\epsilon(x_{j_1})\cdots \epsilon(x_{j_h}))
$$
for $p=2$ and
$$
\theta'(e_J):= \gp (z_c, h^a x_1^b y_{i_1} \cdots y_{i_k}\epsilon(x_{j_1})\cdots \epsilon(x_{j_h}))
$$
for $p>2$ 
and check that $\theta' \circ d = \beta_2 \circ \theta'$.
\end{proof}

\item 
Given multi-indices $I=(i_1, \ldots, i_k)$ (resp.~$J=(j_1, \cdots, j_l)$)  we write $x_I$ for $x_{i_1} \cdots x_{i_k}$ (resp.~$y_J$ for $y_{j_1} \cdots y_{j_l}$).

We recall from Proposition \ref{prop:generators} that the elements of the form $\gp (z_c, x_0 x_I)$ and $\gs (z_c, x_0 x_I)$ (resp.~$\gp (z_c, h^{r} y_J x_I)$ and $\gs(z_c, h^{r} y_J x_I)$ for $p$ an odd prime) are basis of $H_*(\Gb{n}; \F_2[t]/(1-(-t)^d))$ (resp.~$H_*(\Gb{n}; \F_p[t]/(1-(-t)^d))$ for $p$ an odd prime). We assume this basis fixed.

\item For a given prime $p$ we claim that any two distinct modules $\MM{c,a,b,I,J}_p$ (or $\MMp{c,a,b,I,J}_p$)  
are contained in the span of disjoint subsets of the given basis and in particular they are in direct sum.
The case of modules of the form $\MM{c,a,b,I,J}_2$ can be proved as follows (the cases of modules of the form $\MMp{c,a,b,I,J}_2$, and of the form  $\MM{c,a,b,I,J}_p$ and  $\MMp{c,a,b,I,J}_p$ when $p$ is odd,
 are analogous).  
Let
$$
\gs_0 = \gs (z_c, x_0^a x_1^b x_{i_1}^2 \cdots x_{i_k}^2\epsilon(x_{j_1})\cdots \epsilon(x_{j_h}))
$$
with $\epsilon(x_{j_t}) = x_{j_t} \mbox{ or }  x_{j_t-1}^2$ be a generator of $\MM{c,a,b,I,J}$. We can choose an element $\gs_1$ of the form 
$$
\gs_1 = \gs (z_c, x_0^a x_1^b x_{i_1}^2 \cdots x_{i_k}^2\epsilon'(x_{j_1})\cdots \epsilon'(x_{j_h}))
$$
such that $\gs_0$ appears as a summand in $\beta_2(\gs_1).$ The constrains on the multi-indexes $I$ and $J$ and formula \eqref{bockstein2} imply that such an element $\gs_1$ exists if and only if for at least one index $l$ we have that $\epsilon(x_{j_l}) = x_{j_l-1}^2$. If such an element $\gs_1$ exists we have that $\gs_1 \in \MM{c,a,b,I,J}$ and we say that $\gs_0$ \emph{lifts} to $\gs_1$. Hence in a finite number of steps we have that $\gs_0$ lifts to $$\gs (z_c, x_0^a x_1^b x_{i_1}^2 \cdots x_{i_k}^2x_{j_1}\cdots x_{j_h}).
$$  This implies that an element $\gs_0$ does not belong at the same time to two different modules
$\MM{c,a,b,I,J}_2$ and $\MM{c',a',b',I',J'}_2.$

Therefore if we fix a prime $p$ the modules $\MM{c,a,b,I,J}_p$ for all admissible $c,a,b,I,J$ are in direct sum and the modules $\MMp{c,a,b,I,J}_p$ for all admissible $c,a,b,I,J$ are in direct sum.
\item Next we claim that every element of the form
$\gp (z_c, x_0 x_{l_1} \cdots x_{l_k})$ or of the form $\gs (z_c, x_0 x_{l_1} \cdots x_{l_k})$ (resp.~$\gp (z_c, h^{r} y_J x_I)$ and $\gs(z_c, h^{r} y_J x_I)$ for $p$ odd) appears in at least one 
complex  $\MM{c,a,b,I,J}_p$ or $\MMp{c,a,b,I,J}_p$. 
Let us prove this when $p=2$, in the case of a generator of the form $\gs (z_c, x_0 x_{l_1} \cdots x_{l_k})$, the other cases being analogous. We can write the monomial $x_0x_{l_1} \cdots x_{l_k}$ as
$$
x_{q_1}^{p_{q_1}} \cdots x_{q_r}^{p_{q_r}} 
$$
with $q_1 < q_2 < \cdots < q_r$. 
Then we define the strictly ordered multi-index $J$ as follows: $j \in J$ if and only if $j>1$ and one of the following conditions is satisfied:
\begin{enumerate}
	\item  $p_j$ is odd;
	\item $p_j=0$ and $p_{j-1}$ is even and non-zero.
\end{enumerate}
Moreover if $p_j$ is odd we set $\epsilon(x_j) = x_j$, otherwise we set $\epsilon(x_j) = x_{j-1}^2$.
Next we define the multi-index $I$ suitably in the unique way such that 
$$
x_{q_1}^{p_{q_1}} \cdots x_{q_r}^{p_{q_r}} = x_0^{p_0} x_1^{p_1} x_{i_1}^2 \cdots x_{i_k}^2\epsilon(x_{j_1})\cdots \epsilon(x_{j_h}).
$$
It is straightforward to check that 
$$
\gs (z_c, x_0^{p_0} x_1^{p_1} x_{i_1}^2 \cdots x_{i_k}^2\epsilon(x_{j_1})\cdots \epsilon(x_{j_h})) = \gs (z_c,  x_{q_1}^{p_{q_1}} \cdots x_{q_r}^{p_{q_r}} ) = 
$$
$$
=\gs (z_c, x_0 x_{l_1} \cdots x_{l_k})
$$
and that the multi-indexes $I$ and $J$ satisfies the condition of Definition \ref{def:modules}.

\item Hence 
$h_{i}(n,p)$ is the direct sum of all admissible modules $\MM{c,a,b,I,J}_p$ and 
$h_{i}'(n,p)$ is
the direct sum of all admissible modules $\MMp{c,a,b,I,J}_p$ and we recall (equation \eqref{eq:decomposizione}) that
$$
H_*(\Gb{n}; \F_p[t]/(1-(-t)^d)) = h_{i}(n,2) \oplus h_{i}'(n,2).
$$
This direct sum decomposition  implies that the homology $H_{\beta_p}$ of the complex $(H_*(\Gb{n};$ $\F_p[t]/(1-(-t)^d)), \beta_p)$ is given as follows:
$$
H_{\beta_p} = \bigoplus \MM{c,a,b,\emptyset, \emptyset}_p
\oplus
\bigoplus \MMp{c,a,b,\emptyset, \emptyset}_p
$$
for $c$ even, $a \in \N_{>0}$, $b \in \{0,1\}$. In fact if $J = \emptyset$ then also $I = \emptyset$ and for all non-empty $J$ we have from Lemma \ref{lem:boolean} that the complexes $(\MM{c,a,b,I,J}_p,\beta_p)$ and $(\MMp{c,a,b,I,J}_p,\beta_p)$ are acyclic.

\item Using Remark \ref{rem:Jvuoto}  it is easy to check that the complex $H_{\beta_p}$ has Poincar\'e polynomial $(1+t)(1+t+t^2+ \cdots + t^{n-1})$, hence Lemma \ref{lem:no2tor2} follows.
\end{enumerate}
\end{proof}
In \cite{calmar} it was proved that $H_*(\B(2d,d,n); \ring) = H_*(\Gb{n}; \ring[t]/(1-(-t)^d)):$  the homology of the complex braid group $\B(2d,d,n)$ with coefficients in any field was computed, but no computation about the Bockstein homomorphism was given in this case.  
As a consequence of Lemma  \ref{lem:no2tor2} we can add:
\begin{cor}
For $n$ odd the integer homology of $\B(2d,d,n)$ has no $p^2$-torsion.
\end{cor}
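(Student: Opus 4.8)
The plan is to deduce the statement immediately from Lemma~\ref{lem:no2tor2}. The only point requiring a comment is that the identity
$$H_*(\B(2d,d,n);\ring)=H_*(\Gb{n};\ring[t]/(1-(-t)^d))$$
from \cite{calmar} holds over an arbitrary coefficient ring $\ring$, and in particular over $\Z$: this is because it is induced by a Shapiro-lemma isomorphism and not by the field-by-field computations performed in \cite{calmar}. Concretely, $\B(2d,d,n)$ is the index-$d$ subgroup of $\Gb{n}=\B(2d,1,n)$ realized by the $d$-fold covering $\confmd{n}^d\to\confm{n}$, and the coset module $\Z[\Gb{n}/\B(2d,d,n)]$, which is free of rank $d$ over $\Z$, is $\Z[\Gb{n}]$-isomorphic to $\Z[t]/(1-(-t)^d)$ with $t$ acting through the natural surjection $\Gb{n}\twoheadrightarrow\Z/d$. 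Hence $H_i(\B(2d,d,n);\Z)\cong H_i(\Gb{n};\Z[t]/(1-(-t)^d))\cong H_i(\confmd{n}^d;\Z)$ for every $i$.

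Granting this, Lemma~\ref{lem:no2tor2} says exactly that for $n$ odd the right-hand side has no $p^2$-torsion for any prime $p$, and the corollary follows. Equivalently one may phrase the argument through Bocksteins: under the identification above, the Bockstein $\beta_p$ on $H_*(\B(2d,d,n);\F_p)$ coincides with the operator computed in the proof of Lemma~\ref{lem:no2tor2}, whose homology is shown there to have total $\F_p$-dimension equal to $\dim_\Q H_*(\B(2d,d,n);\Q)$, and by \cite[Thm.~3E.4]{hatcher_at} this precludes any $p^2$-torsion in the integral homology.

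All the substantial work sits in Lemma~\ref{lem:no2tor2}, so there is no real obstacle left here; the one thing to be careful about is precisely the coefficient-ring issue flagged above, i.e.\ that the identification of \cite{calmar} is group-theoretic in nature and therefore survives passing from fields to $\Z$.
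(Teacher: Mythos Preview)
Your proposal is correct and follows exactly the paper's approach: the corollary is stated in the paper as an immediate consequence of Lemma~\ref{lem:no2tor2} together with the identification $H_*(\B(2d,d,n);\ring)\cong H_*(\Gb{n};\ring[t]/(1-(-t)^d))$ from \cite{calmar}. Your added remark that this identification is a Shapiro-lemma isomorphism and hence valid over $\Z$ is a welcome clarification (the paper notes the same thing later, in Section~\ref{sec:stab}).
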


\begin{lem} \label{lem:tau_torsion} 
Let $\ring = \Z$. Let $p$ be a prime and assume that $p^k \mid d$, but $p^{k+1} \nmid d$. For $n$ odd the cokernel of the homomorphism
$$\tau:H_i(\confm{n}) \to H_i(\confmd{n}^d)
$$
has no $p^{k+1}$-torsion.
\end{lem}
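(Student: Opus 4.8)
The plan is to reduce to a prime-power degree and then induct along a tower of cyclic $p$-coverings. By Proposition~\ref{prop:commut_tau}, $\tau$ is (under the identifications $H_*(\confm{n})\simeq H_*(\Gb{n};\Z[t]/(1+t))$ and $H_*(\confmd{n}^d)\simeq H_*(\Gb{n};\Z[t]/(1-(-t)^d))$) multiplication by $[d]_{(-t)}$, i.e.\ the transfer of the $d$-fold covering $\confmd{n}^d\to\confm{n}$ classified by $\Gb{n}\to\Z/d$. Write $d=p^k m$ with $p\nmid m$. The covering factors as $\confmd{n}^d\to\confmd{n}^{p^k}\to\confm{n}$, so $\tau=g_*\circ\tau_{p^k}$ with $\tau_{p^k}$ the transfer of the $p^k$-fold covering $\confmd{n}^{p^k}\to\confm{n}$ and $g_*$ the transfer of the $m$-fold covering $\confmd{n}^d\to\confmd{n}^{p^k}$. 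After localizing at $p$ the deck group $\Z/m$ of the latter is invertible, so $H_*(\confmd{n}^d;\Z_{(p)})$ splits into its $\Z/m$-isotypic components, $g_*$ identifies $H_*(\confmd{n}^{p^k};\Z_{(p)})$ with the trivial component, and $\operatorname{im}(\tau)_{(p)}$ lies inside it; hence $\coker(\tau)_{(p)}\simeq\coker(\tau_{p^k})_{(p)}\oplus R$, where $R$ is the sum of the nontrivial components. Being a subgroup of $H_*(\confmd{n}^d;\Z_{(p)})$, $R$ has no $p^2$-torsion by Lemma~\ref{lem:no2tor2}, and it is finite since for $n$ odd $H_*(\confmd{n}^d;\Q)$ and $H_*(\confmd{n}^{p^k};\Q)$ have the same dimension (Step~a in the proof of Lemma~\ref{lem:no2tor2}); so $R$ has $p$-exponent $\le p\le p^k$, and it remains to bound $\coker(\tau_{p^k})$.

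I would prove by induction on $k$ that $\coker(\tau_{p^k})$ has $p$-exponent at most $p^k$. Consider the tower of $\Z/p$-coverings $\confm{n}=X_0\leftarrow X_1\leftarrow\cdots\leftarrow X_k=\confmd{n}^{p^k}$, $X_j:=\confmd{n}^{p^j}$, and write $\tau_{p^k}=\operatorname{tr}_k\circ\cdots\circ\operatorname{tr}_1$ with $\operatorname{tr}_{j+1}\colon H_i(X_j)\to H_i(X_{j+1})$ the transfer. Setting $\kappa_j:=\operatorname{tr}_j\circ\cdots\circ\operatorname{tr}_1$, the group $\operatorname{im}(\operatorname{tr}_{j+1})/\operatorname{tr}_{j+1}(\operatorname{im}\kappa_j)$ is a quotient of $\coker(\kappa_j)$ and sits in a short exact sequence
$$
0\to\operatorname{im}(\operatorname{tr}_{j+1})/\operatorname{tr}_{j+1}(\operatorname{im}\kappa_j)\to\coker(\kappa_{j+1})\to\coker(\operatorname{tr}_{j+1})\to 0 .
$$
If $\coker(\kappa_j)$ has $p$-exponent $\le p^j$ and $\coker(\operatorname{tr}_{j+1})$ is annihilated by $p$, then for $c\in\coker(\kappa_{j+1})$ the element $pc$ lies in the first term, whence $p^{j+1}c=0$; since $\kappa_0=\operatorname{id}$ has zero cokernel, this gives the claim for $\kappa_k=\tau_{p^k}$.

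The crux is that $\coker(\operatorname{tr}_{j+1})$ is killed by $p$. At the level of coefficient systems $\operatorname{tr}_{j+1}$ is multiplication by $[p]_{(-t)^{p^j}}=\Phi_{p^{j+1}}(-t)$ from $\Z[t]/(1-(-t)^{p^j})$ to $\Z[t]/(1-(-t)^{p^{j+1}})$; since $1-(-t)^{p^{j+1}}=(1-(-t)^{p^j})\,\Phi_{p^{j+1}}(-t)$ this map is injective with cokernel $\Z[t]/(\Phi_{p^{j+1}}(-t))\cong\Z[\zeta_{p^{j+1}}]$, so the associated long exact sequence realizes $\coker(\operatorname{tr}_{j+1})$ as a subgroup of $H_i(\Gb{n};\Z[\zeta_{p^{j+1}}])$, with $\Gb{n}$ acting through $-t\mapsto\zeta_{p^{j+1}}$. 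Thus it suffices to show $H_*(\Gb{n};\Z[\zeta_{p^{j+1}}])$ has no $p^2$-torsion for $n$ odd; as this group is rationally trivial ($\Phi_{p^{j+1}}(-t)$ reduces to $\Phi_{p^{j+1}}(1)=p$ modulo $1+t$, and $H_*(\Gb{n};\F[t^{\pmu}])$ is $(1+t)$-torsion for $n$ odd by Proposition~\ref{prop:homol_p2} and its $p=2$ counterpart) and $\ell$-torsion free for $\ell\ne p$, "no $p^2$-torsion" is equivalent to being annihilated by $p$. This is the cyclotomic analogue of Lemma~\ref{lem:no2tor2}, and I would prove it by the same Bockstein computation: the reduction $\Z[\zeta_{p^{j+1}}]/p\cong\F_p[t]/(1+t)^{p^j(p-1)}$ gives $H_*(\Gb{n};\Z[\zeta_{p^{j+1}}]/p)\cong H_*(\Gb{n};\F_p[t^{\pmu}])\oplus H_{*-1}(\Gb{n};\F_p[t^{\pmu}])$ (the left side being independent of the exponent $\ge 1$ because $(1+t)$ kills $H_*(\Gb{n};\F_p[t^{\pmu}])$), and one checks, using the explicit $\F_p$-bases of Proposition~\ref{prop:generators} and the Bockstein formulas \eqref{bockstein1}--\eqref{bockstein2p}, that the Bockstein spectral sequence degenerates at $E_2$.

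The main obstacle is precisely this last step — the cyclotomic counterpart of Lemma~\ref{lem:no2tor2}, i.e.\ tracking the Bockstein on the explicit generators with coefficients in $\Z[\zeta_{p^{j+1}}]$ rather than $\Z[\Z/d]$. Everything else — the factorization through $\confmd{n}^{p^k}$, the telescoping along the $p$-tower (which is what produces the sharp bound $p^k$, one power of $p$ per stage), and the identification of the intermediate transfer-cokernels with $H_*(\Gb{n};\Z[\zeta_{p^{j+1}}])$ — is formal homological algebra together with the standard relations between transfers and deck-group norms.
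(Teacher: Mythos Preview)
There is a genuine gap at the step you flag as the main obstacle, and the fix you sketch does not work as stated.

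Your reduction to the claim that $H_*(\Gb{n};\Z[\zeta_{p^{j+1}}])$ is annihilated by $p$ is formally correct, but the proposed proof---``the same Bockstein computation'' via formulas \eqref{bockstein1}--\eqref{bockstein2p}---fails. Those formulas compute the Bockstein for the extension $0\to\F_p[t]/(1-(-t)^d)\to(\Z/p^2)[t]/(1-(-t)^d)\to\F_p[t]/(1-(-t)^d)\to 0$; you need the one for $0\to\F_p[\zeta]\to\Z[\zeta]/p^2\to\F_p[\zeta]\to 0$. It is true that the mod-$p$ coefficient modules $\F_p[t]/(1+t)^{p^{j+1}}$ and $\F_p[t]/(1+t)^{p^j(p-1)}$ give abstractly isomorphic $H_*(\Gb{n};-)$, but the natural quotient map between them is \emph{not} an isomorphism on homology (trace through the long exact sequences: it annihilates the summand coming from $H_{*-1}(\Gb{n};\F_p[t^{\pm1}])$), so you cannot transport the Bockstein along it. Concretely, writing $\Phi_{p^{j+1}}(-t)=(1+t)^N+p\,g(t)$ in $(\Z/p^2)[t]$ with $g(-1)=1$, the cyclotomic Bockstein of the $\gp$-type class $(1+t)^{N-1}w$ picks up an extra term $-g(t)\cdot\tfrac{\DB w}{1+t}$ landing in the $\gs$-type summand, so the boolean-lattice decomposition of Lemma~\ref{lem:boolean} no longer applies. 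Moreover, here you need the Bockstein homology to be \emph{zero} (since $H_*(\Gb{n};\Q[\zeta])=0$), a strictly stronger conclusion than Lemma~\ref{lem:no2tor2} gives for $\Z[t]/(1-(-t)^d)$. Establishing this cyclotomic analogue is genuinely new work, not a citation.

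The paper's proof is much shorter and avoids all of this. It uses the explicit elements $\cB'\subset H_i(\confm{n};\Z)$ and $\cB''\subset H_i(\confmd{n}^d;\Z)$ of Definition~\ref{def:BB} and Proposition~\ref{prop:BB} (free generators of maximal free submodules, with images linearly independent over every $\F_p$), observes that $\tau$ sends $\cB'$ diagonally to $\cB''$ with eigenvalues $1$ and $d$ (formulas \eqref{rat_gen1}--\eqref{rat_gen4}), and concludes directly from the no-$p^2$-torsion of both groups (Lemma~\ref{lem:no2tor2} and its $d=1$ analogue) together with the invertibility of $\tau$ modulo primes not dividing $d$ (Lemma~\ref{lem:tau_invertible}). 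No tower, no cyclotomic coefficients, no induction. Ironically, the cleanest way to fill your gap would be to apply this same explicit-basis argument to each intermediate transfer $\mathrm{tr}_{j+1}$ (the sets $\cB''$ exist for every $d$, and $\mathrm{tr}_{j+1}$ is again diagonal on them with eigenvalues $1$ and $p$)---but once you do that, the inductive scaffolding is superfluous.
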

\begin{proof} The proof is practically identical to \cite[Lem.~5.4]{cal_sal_superelliptic}: we use here the sets $\cB' \subset H_i(\confm{n};\Z)$  and $\cB'' \subset H_i(\confmd{n}^d;\Z)$ introduced in Definition \ref{def:BB}.  These are free sets of generators of a maximal free $\Z$-submodule of $H_i(\confm{n};\Z)$  and $H_i(\confmd{n}^d;\Z)$ respectively (Proposition \ref{prop:BB}).

The map $\tau$ 
acts on the elements of $\cB'$ by:
\begin{eqnarray}
\tau: \omega_{2i,j,0}^{(1)} & \mapsto  \frac{1-(-t)^d}{1+t} &\omega_{2i,j,0}^{(d)} = d \omega_{2i,j,0}^{(d)}, \label{rat_gen1}\\
\tau: \widetilde{\omega}_{2i,j,0}^{(1)} & \mapsto \phantom{\frac{1-(-t)^d}{1+t}}& \widetilde{\omega}_{2i,j,0}^{(d)},\label{rat_gen2}\\
\tau: \omega_{2i,j,1}^{(1)}  & \mapsto \frac{1-(-t)^d}{1+t}& \omega_{2i,j,1}^{(d)} = d \omega_{2i,j,1}^{(d)},\label{rat_gen3}\\
\tau: \widetilde{\omega}_{2i,j,1}^{(1)} & \mapsto \phantom{\frac{1-(-t)^d}{1+t}}& \widetilde{\omega}_{2i,j,1}^{(d)}.\label{rat_gen4}
\end{eqnarray}
Therefore $\tau$ diagonally maps each element of $\cB'$ to $1$ or $d$ times the corresponding element of $\cB''$.

Then the result follows since the
 $\Z$-modules $H_i(\confm{n};\Z)$ and $H_i(\confmd{n}^d;\Z)$ have no $p^2$-torsion and  $\tau$ is an isomorphism mod $p$ if $p \nmid d$ (see Lemma \ref{lem:tau_invertible}).
\end{proof}

Now we consider homology with coefficient in $\ring = \Z$. As stated in 
\cite[Cor.~4.10]{cal_sal_superelliptic}, the image of  $\mu_*$ is the submodule $H_i(\confm{n}, \conf_n)$ in $H_i(\confm{n})$ and we consider the composition $\iota = J \circ \tau \circ \mu_*: H_{i-1}(\confm{n-1}) \otimes H_1(S^1) \to H_i(\confmd{n}^d, \conf_n)$.

\begin{lem} \label{lem:comp_no_4}
Let $p$ be a prime and assume that $p^k \mid d$, but $p^{k+1} \nmid d$. 
For $n$ odd 
the cokernel of the composition $J \circ \tau \circ \mu_*$ has no $p^{k+1}$-torsion.
\end{lem}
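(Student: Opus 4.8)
The plan is to identify $\coker(J\circ\tau\circ\mu_*)$ with $\coker(\tau_|)$, where $\tau_|\colon H_i(\confm{n},\conf_n)\to H_i(\confmd{n}^d,\conf_n)$ is (the integral version of) the map of Corollary~\ref{cor:tau_restricted}, and then to deduce the bound from Lemma~\ref{lem:tau_torsion}. First I would use that $\mu_*$ is injective with image the direct summand $H_i(\confm{n},\conf_n)\subset H_i(\confm{n})$ (recalled just above from \cite[Cor.~4.10]{cal_sal_superelliptic}); so $\mu_*$ is an isomorphism onto that summand and $\coker(J\circ\tau\circ\mu_*)\cong\coker\bigl(J\circ\tau|_{H_i(\confm{n},\conf_n)}\bigr)$. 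With respect to the splittings $H_i(\confm{n})=H_i(\conf_n)\oplus H_i(\confm{n},\conf_n)$ and $H_i(\confmd{n}^d)=H_i(\conf_n)\oplus H_i(\confmd{n}^d,\conf_n)$ coming from the sections $\overline{s}$ and $s$, Theorem~\ref{teo:tau_restricted} (here $n$ is odd) says that $\tau$ sends $H_i(\conf_n)$ into $H_i(\conf_n)$, acting there as multiplication by $d$; hence $\tau$ induces a map on the quotients which is precisely $\tau_|$, and the component along $H_i(\conf_n)$ of $\tau(y)$ is killed by $J$ for every $y\in H_i(\confm{n},\conf_n)$. Thus $J\circ\tau|_{H_i(\confm{n},\conf_n)}=\tau_|$, and it suffices to bound the $p$-torsion of $\coker(\tau_|)$.

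For this I would apply the snake lemma to the morphism of short exact sequences $0\to H_i(\conf_n)\to H_i(\confm{n})\to H_i(\confm{n},\conf_n)\to 0$ and $0\to H_i(\conf_n)\to H_i(\confmd{n}^d)\to H_i(\confmd{n}^d,\conf_n)\to 0$, with vertical maps $d\cdot\id_{H_i(\conf_n)}$, $\tau$ and $\tau_|$ (the squares commute by Theorem~\ref{teo:tau_restricted} and functoriality of quotients). This yields an exact sequence $\coker(d\cdot\id_{H_i(\conf_n)})\to\coker(\tau)\to\coker(\tau_|)\to 0$, so $\coker(\tau_|)$ is a quotient of $\coker(\tau)$. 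For $n$ odd the map $\tau$ is a rational isomorphism (Lemma~\ref{lem:tau_invertible} with $p=0$, transported through Proposition~\ref{prop:commut_tau}); since all the groups are finitely generated, $\coker(\tau)$ is a \emph{finite} abelian group, and by Lemma~\ref{lem:tau_torsion} the exponent of its $p$-primary part divides $p^k$. The $p$-primary component of a quotient of a finite abelian group is a quotient of the $p$-primary component of that group, so the same bound $p^k$ holds for $\coker(\tau_|)=\coker(J\circ\tau\circ\mu_*)$; that is, $\coker(J\circ\tau\circ\mu_*)$ has no $p^{k+1}$-torsion.

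I expect the argument to be essentially formal once Lemma~\ref{lem:tau_torsion} is granted, running in parallel with the $d=2$ case in \cite[\S~5]{cal_sal_superelliptic}. The two points deserving care are the block-triangular shape of $\tau$ with respect to the section splittings — immediate from Theorem~\ref{teo:tau_restricted} — and the fact that the final quotient step really requires $\coker(\tau)$ to be a torsion (indeed finite) group rather than merely to have bounded $p$-torsion; this is where the hypothesis "$n$ odd" is used a second time, to make $\tau$ a rational isomorphism. No step looks like a genuine obstacle.
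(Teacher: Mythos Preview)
Your argument is correct and follows the same strategy as the paper: identify $\coker(J\circ\tau\circ\mu_*)$ with $\coker(\tau_{22})$ (the paper's notation for your $\tau_|$) using Theorem~\ref{teo:tau_restricted}, and then bound its $p$-torsion via Lemma~\ref{lem:tau_torsion}. The only packaging difference is that the paper carries out a direct element chase---given $p^{k+1}x_2\in\operatorname{im}\tau_{22}$, it modifies a preimage to land in $\operatorname{im}\tau$, applies Lemma~\ref{lem:tau_torsion}, and pulls back---whereas you invoke the snake lemma to exhibit $\coker(\tau_|)$ as a quotient of $\coker(\tau)$ and then use finiteness of $\coker(\tau)$. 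The paper's chase does not actually need the rational isomorphism (it only uses that the $p$-primary exponent of $\coker\tau$ is at most $p^k$), so your route imports one extra ingredient; but you justify it correctly, and the snake-lemma formulation is arguably cleaner.
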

The proof of the lemma above is analogous to the proof of \cite[Lem.~5.5]{cal_sal_superelliptic}, with suitable modification. For the reader's convenience we give here the adapted proof.
\begin{proof}
First we can consider the homomorphism $\overline{s}_*:H_*(\conf_n) \to H_*(\confm{n})$ induced by the inclusion $\overline{s}:\conf_n \into \confm{n}$; given the decomposition $H_*(\confmd{n}^d) = H_*(\conf_n) \oplus H_*(\confmd{n}^d, \conf_n)$, we call $\pi_1$ and $\pi_2= J$ respectively the projections of $H_*(\confmd{n}^d)$ onto the first and the second summand,  and hence the map  $\tau_{11}:H_*(\conf_n) \to H_*(\conf_n)$ defined by the composition
$$
H_*(\conf_n) \stackrel{\overline{s}_*}{\longrightarrow} H_*(\confm{n}) \stackrel{\tau}{\longrightarrow} H_*(\confmd{n}^d) \stackrel{\pi_1}{\longrightarrow} H_*(\conf_n).
$$
We can consider the following diagram:
$$
\xymatrix{ H_*(\conf_n) \ar[d]^{\overline{s}_*} \ar[drr]^(0.65){
\sum_{j=0}^d s^{(j)}_*
} \ar[rr]_(0.44){\tau_{|\overline{s}_*H_*(\conf_n)}} \ar@/^1pc/[rrr]^{\tau_{11}}
	&& H_*(
\bigsqcup_{j=0}^{d-1}	s^{(j)}(\conf_{n})  
	)\ar[d]^{i_*} \ar[r] & H_*(\conf_n) \\ H_*(\confm{n}) \ar[rr]^\tau && H_*(\confmd{n}^d) \ar[ur]^{\pi_1}  }
$$
From Theorem \ref{teo:tau_restricted} we have that that $\tau_{11} = d \Id_{H_*(\conf_n)}$ and
$\pi_2 \tau\overline{s}_*(H_*(\conf_n))=0$.

Now, let $x_2 \in H_*(\confmd{n}^d, \conf_n)$ 
and let $\tau_{22}: H_*(\confm{n}, \conf_n) \to H_*(\confmd{n}^d, \conf_n)$ be the map induced by $\tau$ by restricting to $H_*(\confm{n}, \conf_n)$ and projecting to $H_*(\confmd{n}^d, \conf_n)$. Recall from 
\cite[Prop.~4.2]{cal_sal_superelliptic}
that $\mu_*$ is injective. If there exists $y \in H_*(\confm{n})$ such that $\pi_2 \tau (y) = p^{k+1} x_2$, then let $x_1:= \pi_1(\tau(y))$. We can consider $-x_1 + d y \in H_*(\confm{n})$ and we have that $\tau(-x_1 + d y) = - d x_1 + d (x_1 +p^{k+1}x_2) = dp^{k+1} x_2$. Since the cokernel of $\tau$ has at most $p^k$-torsion (see Lemma \ref{lem:tau_torsion}) it follows that $dx_2 = 0$ in $\coker \tau$ and finally, since $\pi_2 \tau \overline{s}_*(H_*(\conf_n))= 0$, $p^k x_2 = 0$ in $\coker \tau_{22}$.
\end{proof}


From 
\cite[Lem.~5.1]{cal_sal_superelliptic}
and \ref{lem:comp_no_4} we have that, with integer coefficients, the kernel  of the map $$\iota:H_{i-1}(\Br_n; H_1(S^1 \times P)) \to H_{i-1}(\Br_n; H_1(\ddiskP^d))$$ in diagram \eqref{eq:bia2} has no $p^2$-torsion and the cokernel of $\iota$ has no $p^{k+1}$-torsion. 
Hence we have
:
\begin{thm}\label{thm:4tor}
Let $p$ be a prime and assume that $p^k \mid d$, but $p^{k+1} \nmid d$. 
For $n$ odd 
the homology $H_{i}(\Br_n; H_1(\surf_n^d))$ computed with coefficients in the ring $\ring = \Z$ has $p$-torsion of order at most $p^{k+1}$. \qed
\end{thm}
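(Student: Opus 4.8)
The plan is to deduce the theorem directly from the long exact sequence \eqref{eq:bia2}, once the $p$-torsion of the kernel and cokernel of the map $\iota=J\circ\tau\circ\mu_*$ occurring there has been controlled. Write $\iota_j$ for the instance of $\iota$ in degree $j$, namely $\iota_j\colon H_j(\Br_n;H_1(S^1\times P)) \to H_j(\Br_n;H_1(\ddiskP^d))$. First I would collect the two needed $p$-local facts. By Lemma \ref{lem:comp_no_4}, the cokernel of $\iota_j$ has no $p^{k+1}$-torsion, so its $p$-primary part is annihilated by $p^{k}$; and by \cite[Lem.~5.1]{cal_sal_superelliptic}, the kernel of $\iota_j$ has no $p^{2}$-torsion (here one uses that $\ker\iota_j$ is a submodule of $H_j(\Br_n;H_1(S^1\times P))\cong H_j(\confm{n-1})$), so its $p$-primary part is annihilated by $p$.

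Next I would read off from \eqref{eq:bia2}, after the index shift, the short exact sequence
$$0 \to \coker\iota_{i} \to H_{i}(\Br_n; H_1(\surf_n^d)) \to \ker\iota_{i-1} \to 0 .$$
Writing it as $0\to A\to B\to C\to 0$, with the $p$-primary part of $A$ killed by $p^{k}$ and that of $C$ killed by $p$, the last step is the elementary remark that any $b\in B$ of $p$-power order has image of $p$-power order in $C$, so that $pb$ maps to $0$ and hence $pb\in A$; since $pb$ still has $p$-power order, $p^{k}(pb)=p^{k+1}b=0$. Therefore the $p$-primary part of $B=H_{i}(\Br_n; H_1(\surf_n^d))$ is annihilated by $p^{k+1}$, which is exactly the claimed bound.

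The real content is already present upstream, in Lemma \ref{lem:comp_no_4}, which itself rests on the diagonal description of the transfer $\tau$ on the rational bases $\cB'$, $\cB''$ (Lemma \ref{lem:tau_torsion}, via Theorem \ref{teo:tau_restricted}) and on the absence of $p^2$-torsion in $H_*(\confmd{n}^d;\Z)$ for $n$ odd (Lemma \ref{lem:no2tor2}). At this stage the only genuine, if minor, obstacle is checking that the extension argument yields precisely the bound $p^{k+1}$ and nothing coarser --- which is exactly why one needs the sharp ``no $p^2$-torsion'' statement for $\ker\iota_{i-1}$ rather than mere finiteness --- together with keeping careful track of the index shift between \eqref{eq:bia2} and the statement.
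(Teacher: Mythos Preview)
Your proposal is correct and follows essentially the same route as the paper: it invokes \cite[Lem.~5.1]{cal_sal_superelliptic} for the kernel and Lemma~\ref{lem:comp_no_4} for the cokernel of $\iota$, then reads off the bound from the short exact sequence extracted from \eqref{eq:bia2}. Your write-up is in fact more explicit than the paper's, which simply records the two torsion facts and concludes; the elementary extension argument you spell out is exactly the step the paper leaves implicit in its ``Hence we have''.
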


\section{No $p^{k+1}$-torsion}\label{sec:no4tor}
In this section we will show that 
if $p$ is a prime such that $p^k \mid d$ ($k>0$), but $p^{k+1} \nmid d$
and $n$ is odd then $H_{i}(\Br_n; H_1(\surf_n^d))$ has $p$-torsion of order at most $p^k$. In particular for all this section we will assume that $p$ is a prime that divides $d$.

We will consider 
for $n$ odd the following short exact sequence associated to \eqref{eq:bia2}, with coefficients in $\Z_p$ and in $\Z$
\begin{equation}\label{eq:short_split}
0 \to \coker \iota \to H_{i}(\Br_n; H_1(\surf_n^d)) \to \ker \iota \to 0.
\end{equation}
\begin{prop}\label{prop:split}
The 
short exact sequence \eqref{eq:short_split} with $\Z$ coefficients splits.
\end{prop}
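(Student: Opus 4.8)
The plan is to reduce to the $p$-primary parts and then to exhibit an explicit splitting, following the line of \cite[\S 5]{cal_sal_superelliptic}. Since $n$ is odd, Theorem~\ref{th:only_d_torsion} shows that $H_i(\Br_n;H_1(\surf_n^d))$ is a finite abelian group whose torsion is supported only at the primes dividing $d$; hence the same holds for the submodule $\coker\iota$ and the quotient $\ker\iota$ in \eqref{eq:short_split}, and \eqref{eq:short_split} is the direct sum, over the primes $p\mid d$, of the corresponding short exact sequences of $p$-primary components. It therefore suffices to split each of these; fix a prime $p\mid d$ and let $p^k$ be the exact power of $p$ dividing $d$.

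Work $p$-locally and write the $p$-primary part of \eqref{eq:short_split} as $0\to A\to B\to C\to 0$. Applying $\Tor(-,\F_p)$ yields a six-term exact sequence from which one sees that this splits if and only if $\dim_{\F_p}(B\otimes\F_p)=\dim_{\F_p}(A\otimes\F_p)+\dim_{\F_p}(C\otimes\F_p)$, the inequality ``$\le$'' being automatic; equivalently --- since $\ker\iota$ has no $p^2$-torsion (see the discussion preceding Lemma~\ref{lem:comp_no_4}), so that $C$ is an $\F_p$-vector space --- the sequence splits as soon as every generator of a cyclic summand of $C=\ker\iota$ lifts to an element of order $p$ in $B=H_i(\Br_n;H_1(\surf_n^d))$, these lifts then spanning a complement of $\coker\iota$. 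To make this effective I would use the identification $\iota=J\circ\tau\circ\mu_*$ together with the fact that $\mu_*$ is an isomorphism onto the direct summand $H_*(\confm{n},\conf_n)$ of $H_*(\confm{n})$: this replaces $\coker\iota$ and $\ker\iota$ by the cokernel and the kernel of the restriction of the transfer $\tau\colon H_*(\confm{n})\to H_*(\confmd{n}^d)$ to $H_*(\confm{n},\conf_n)$, followed by projection to $H_*(\confmd{n}^d,\conf_n)$. Feeding in the diagonal behaviour of $\tau$ on a rational basis --- each element of $\cB'$ is sent to $1$ or to $d$ times the corresponding element of $\cB''$, cf.~\eqref{rat_gen1}--\eqref{rat_gen4} and Definition~\ref{def:BB} --- together with the absence of $p^2$-torsion in $H_*(\confm{n};\Z)$ and $H_*(\confmd{n}^d;\Z)$ (Lemma~\ref{lem:no2tor2}), one obtains explicit generating sets of $\coker\iota$ and of $\ker\iota$; in particular the classes on which $\tau$ acts as multiplication by $d=p^k\cdot(\mathrm{unit})$ are those responsible for the $p^k$-torsion in $\coker\iota$.

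The step I expect to be the crux is then to produce, for each generator of a cyclic summand of $\ker\iota$, a lift of order $p$ to $H_i(\Br_n;H_1(\surf_n^d))$ --- equivalently, to rule out that such a generator and a $p^k$-torsion generator of $\coker\iota$ fuse into a cyclic summand of order $p^{k+1}$ of the middle group (which is a priori consistent with the torsion bound of Theorem~\ref{thm:4tor}). Concretely I would realize each generator of $\ker\iota$ by a cycle in $\confm{n-1}\times S^1$ that bounds both in $\confmd{n}^d$ and in the tubular neighbourhood $N$, form the resulting Mayer--Vietoris class in $H_i(\Br_n;H_1(\surf_n^d))$, and verify --- using the explicit Bockstein formulas \eqref{bockstein1}--\eqref{bockstein2p} and the combinatorics of the multi-indices of Definition~\ref{def:modules} --- that this class is annihilated by $p$. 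This is the direct analogue of the computation carried out for $d=2$ in \cite[\S 5]{cal_sal_superelliptic}; the only new feature is that the diagonal entry $d$ now produces $p^k$-torsion in place of $2$-torsion, which does not affect the structure of the argument. Assembling the order-$p$ lifts gives the desired splitting of \eqref{eq:short_split}.
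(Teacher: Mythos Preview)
Your reduction to $p$-primary parts and your splitting criterion are exactly right, and they coincide with the paper's starting point: writing $a_i=\dim_{\F_p}(H_i(\Br_n;H_1(\surf_n^d;\Z))\otimes\F_p)$, $u_i=\dim_{\F_p}(\coker\iota_i\otimes\F_p)$, $v_i=\dim_{\F_p}(\ker\iota_i\otimes\F_p)$, the sequence splits in every degree iff $u_i+v_{i-1}=a_i$ for all $i$, equivalently (since $a_i+a_{i-1}=\overline u_i+\overline v_{i-1}$ with $\overline u_i,\overline v_i$ the $\F_p$-coefficient ranks) iff $2\sum_i(u_i+v_i)=\sum_i(\overline u_i+\overline v_i)$.

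Where you diverge from the paper is in how to verify this. You propose to build explicit order-$p$ lifts via Mayer--Vietoris and to check the order using the Bockstein formulas \eqref{bockstein1}--\eqref{bockstein2p}. That step is the gap: those Bocksteins act on $H_*(\confmd{n}^d;\F_p)=H_*(\Gb{n};\F_p[t]/(1-(-t)^d))$, not on $H_i(\Br_n;H_1(\surf_n^d))$, and there is no direct mechanism in your sketch for reading off the order of a Mayer--Vietoris class in the latter group from them. Producing the class is easy; proving it has order $p$ rather than $p^{k+1}$ (the a~priori bound of Theorem~\ref{thm:4tor}) is exactly what has to be shown, and your outline does not do it.

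The paper avoids this entirely by staying with the dimension count you set up and then discarded. Two ingredients finish it. First, with $\F_p$ coefficients the explicit bases of Proposition~\ref{prop:generators} give a bijection between bases of $\ker\iota_i$ and $\coker\iota_i$ (Remark~\ref{rem:basi_mod_2}), so $\overline u_i=\overline v_i$. Second, on the integral side one uses the rational bases $\cB',\cB''$ of Definition~\ref{def:BB} together with the diagonal description \eqref{rat_gen1}--\eqref{rat_gen4} of $\tau$ (each $\cB'$-element goes to $1$ or $d$ times its $\cB''$-partner) and the absence of $p^2$-torsion in $H_*(\confm{n};\Z)$ and $H_*(\confmd{n}^d;\Z)$ (Lemma~\ref{lem:no2tor2}). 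Letting $w_i$ count the rational generators $\omega^{(1)}_{0,j,\epsilon}$ in degree $i$ (these are precisely the ones whose $\F_p$-reductions land in $\ker J$), the Universal Coefficients Theorem gives
\[
\textstyle\sum_i u_i=\sum_i\frac{\overline u_i-w_i}{2}+\sum_i w_i,\qquad \sum_i v_i=\sum_i\frac{\overline v_i-w_i}{2},
\]
and then $2\sum_i(u_i+v_i)=\sum_i(\overline u_i+\overline v_i)$ is immediate. No lift has to be constructed.
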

\begin{proof}
Let us fix an odd integer $n$. We write $$H_{i}(\Br_n; H_1(\surf_n^d; \Z)) \otimes \Z_p = \Z_p^{a_i},$$
$$\coker (\iota_i :H_{i}(\Br_n; H_1(S^1 \times P;\Z)) \to H_{i}(\Br_n; H_1(\ddiskP^d;\Z) )\otimes \Z_p = \Z_p^{u_i}$$ and $$\ker (\iota_i :H_{i}(\Br_n; H_1(S^1 \times P;\Z)) \to H_{i}(\Br_n; H_1(\ddiskP^d;\Z) )\otimes \Z_p = \Z_p^{v_i}$$

Since for $n$ odd we have seen that $H_{i}(\Br_n; H_1(\surf_n^d; \Z))$ is all torsion (see Theorem \ref{th:only_d_torsion}), we have that the $p$-torsion part of \eqref{eq:short_split} splits if and only if 
$$
u_i + v_{i-1} = a_i.
$$


Moreover, with coefficients in $\Z_p$, we have
$$
H_{i}(\Br_n; H_1(\surf^d_n; \Z_p)) = \Z_p^{a_i+a_{i-1}}.
$$
Let $$\overline{u}_i := \rk \coker (\iota_i :H_{i}(\Br_n; H_1(S^1 \times P ;\Z_p)) \to H_{i}(\Br_n; H_1(\ddiskP^d ;\Z_p) ) $$
and 
$$\overline{v}_i := \rk \ker (\iota_i :H_{i}(\Br_n; H_1(S^1 \times P ;\Z_p)) \to H_{i}(\Br_n; H_1(\ddiskP^d;\Z_p) ). $$
It follows that
the $p$-torsion part of \eqref{eq:short_split} splits if and only if 
$$2 \sum_i (u_i + v_i) = \sum_i (\overline{u}_i  + \overline{v}_i ). $$

Hence we can compute the rank of the modules above.

A basis of the homology $H_{i}(\Br_n; H_1(S^1 \times P;\Z_p))$ is given as follows. 
Following \cite{calmar}, the homology $H_*(\Gb{n}; \F_p[t]/(1+t))$ for $n$ odd is generated, as an $\F_p[t]$-module, by the classes of the following form (see Proposition \ref{prop:generators}):
\begin{enumerate}
	\item[a)] for $p=2$,
\begin{equation}\label{generators1Z2}
\gpu (z_c, x_0 x_{i_1} \cdots x_{i_k}) :=z_{c+1} x_{i_1} \cdots x_{i_k}
\end{equation}
 and
\begin{equation}\label{generators2Z2}
\gsu(z_c, x_0 x_{i_1} \cdots x_{i_k}):=\frac{\DB(z_{c+1}x_{i_1} \cdots x_{i_k})}{(1+t)}
\end{equation}
where we assume $0 \leq i_1 \leq \cdots i_k$ and $c$ even;
\item[b)] for $p$ an odd prime,
\begin{equation}
\gpu (z_c, h^{r} y_{j_1} \cdots y_{j_l } x_{i_1} \cdots x_{i_k}) :=  z_{c+1} h^{r-1} y_{j_1} \cdots y_{j_l }x_{i_1} \cdots x_{i_k}.
\end{equation}
and
\begin{equation}
\gsu(z_c, h^{r} y_{j_1} \cdots y_{j_l } x_{i_1} \cdots x_{i_k}):=\frac{\DB(z_{c+1}h^{r-1} y_{j_1} \cdots y_{j_l }x_{i_1} \cdots x_{i_k})}{(1+t)}.
\end{equation}

\end{enumerate}

In particular for $p=2$ the image of $\mu_*$ if generated by all elements of the form $\gpu (z_c, x_0 x_{i_1} \cdots x_{i_k})$ and all elements of the form  $\gsu(z_c, x_0 x_{i_1} \cdots x_{i_k})$ with $c>0$, while for $p$ odd the image of $\mu_*$ is  generated by all elements of the form  $\gp_1 (z_c, h^{r} y_{j_1} \cdots y_{j_l } x_{i_1} \cdots x_{i_k})$ and all  elements of the form  $\gs_1(z_c, h^{r} y_{j_1} \cdots y_{j_l } x_{i_1} \cdots x_{i_k})$ with $c>0$.

As seen in Section 
\ref{sec:homol_artin}
the homology $H_*(\Gb{n}; \F_p[t]/(1-(-t)^d))$ for $n$ odd is generated, as an $\F_p[t]$-module, by the following classes
\begin{enumerate}
	\item[a)]
for $p=2$, by the classes (already introduced in  \eqref{generators1} and \eqref{generators2}, Proposition \ref{prop:generators}):
\begin{equation*}
\gp (z_c, x_0 x_{i_1} \cdots x_{i_k}) :=\frac{1-(-t)^d}{1+t}z_{c+1} x_{i_1} \cdots x_{i_k}
\end{equation*}
and
\begin{equation*}
\gs(z_c, x_0 x_{i_1} \cdots x_{i_k}):=\frac{\DB(z_{c+1}x_{i_1} \cdots x_{i_k})}{(1+t)}
\end{equation*}
where we assume $0 \leq i_1 \leq \cdots i_k$ and $c$ even;
\item[b)] for $p$ odd, by the classes (already introduced in
\eqref{generators1p}, \eqref{generators2p}, Proposition \ref{prop:generators}
\begin{equation}\label{generators1p_second}
\gp (z_c, h^{r} y_{j_1} \cdots y_{j_l } x_{i_1} \cdots x_{i_k}) := \frac{1-(-t)^d}{1+t}  z_{c+1} h^{r-1} y_{j_1} \cdots y_{j_l }x_{i_1} \cdots x_{i_k}.
\end{equation}
and
\begin{equation}\label{generators2p_second}
\gs(z_c, h^{r} y_{j_1} \cdots y_{j_l } x_{i_1} \cdots x_{i_k}):=\frac{\DB(z_{c+1}h^{r-1} y_{j_1} \cdots y_{j_l }x_{i_1} \cdots x_{i_k})}{(1+t)}.
\end{equation}
\end{enumerate}

The following classes generate the image of $s_*$ and hence these are the generators of the kernel of $J$:
\begin{enumerate}
	\item[a)] for $p=2$, 
by the classes 
$\gs(z_c, x_0 x_{i_1} \cdots x_{i_k})$ for $c=0$;
\item[b)] for $p$ odd, by the classes $\gs(z_c, h^{r} y_{j_1} \cdots y_{j_l } x_{i_1} \cdots x_{i_k})$ with $c=0$
\end{enumerate}

Hence for any $p$ the map $\tau$ acts as follows:
\begin{enumerate}
	\item[a)] for $p=2$:
\begin{eqnarray*}
\tau: \gpu (z_c, x_0 x_{i_1} \cdots x_{i_k}) & \mapsto & \phantom{d} \gp (z_c, x_0 x_{i_1} \cdots x_{i_k});\\
\tau: \gsu (z_c, x_0 x_{i_1} \cdots x_{i_k}) &\mapsto  &   d \gs (z_c, x_0 x_{i_1} \cdots x_{i_k});
\end{eqnarray*}
\item[b)] for $p$ odd:
\begin{eqnarray*}
	\tau: \gpu (z_c, h^{r} y_{j_1} \cdots y_{j_l } x_{i_1} \cdots x_{i_k}) & \mapsto & \phantom{d} \gp (z_c, h^{r} y_{j_1} \cdots y_{j_l } x_{i_1} \cdots x_{i_k});\\
	\tau: \gsu (z_c, h^{r} y_{j_1} \cdots y_{j_l } x_{i_1} \cdots x_{i_k}) &\mapsto  & d \gs (z_c, h^{r} y_{j_1} \cdots y_{j_l } x_{i_1} \cdots x_{i_k}).
\end{eqnarray*}
\end{enumerate}
Hence if we assume that $p \nmid d$ we have that $\iota$ is an isomorphism with coefficients in $\Z_p$.
As a consequence using the Universal Coefficients Theorem we can state a stronger version of Theorem \ref{th:only_d_torsion}.

Let now focus on a prime $p$ that divides $d$. 
\begin{rem}\label{rem:basi_mod_2}
Let $p$ a prime such that $p\mid d$ and let $n$ be an odd integer. A basis of $$\coker (\iota_i :H_{i}(\Br_n; H_1(S^1 \times P;\Z_p)) \to H_{i}(\Br_n; H_1(\ddiskP^d;\Z_p) )$$ is
given:
\begin{enumerate}[wide=0pt]
	\item[a)] for $p=2$,
 by the elements $\gs (z_c, x_0 x_{i_1} \cdots x_{i_k})$ with $c$ even, $c>0$, of degree $n$ and homological dimension $i+1$; 
 	\item[b)] for $p$ odd,
 by the elements $\gs(z_c, h^{r} y_{j_1} \cdots y_{j_l } x_{i_1} \cdots x_{i_k})$ with $c$ even, $c>0$, $r>0$, of degree $n$ and homological dimension $i+1$; 
\end{enumerate}
A basis of $$ \ker (\iota_i :H_{i}(\Br_n; H_1(S^1 \times P;\Z_p)) \to H_{i}(\Br_n; H_1(\ddiskP^d;\Z_p) )$$ is given:
\begin{enumerate}[wide=0pt]
	\item[a)] for $p=2$, 
 by the elements $\gsu (z_c, x_0 x_{i_1} \cdots x_{i_k})$ with $c$ even, $c>0$, of degree $n$ and homological dimension $i+1$;
\item[b)] for $p$ odd
 by the elements $\gsu (z_c, h^{r} y_{j_1} \cdots y_{j_l } x_{i_1} \cdots x_{i_k})$ with $c$ even, $c>0$, $r>0$, of degree $n$ and homological dimension $i+1$.
\end{enumerate}
Clearly in both cases a) and b) the base of the kernel is in bijection with the corresponding base of the cokernel  and we have $\overline{u}_i = \overline{v}_i$.

\end{rem}
 

Now recall (see Definition \ref{def:BB} and Proposition \ref{prop:BB}) the bases
 $\cB'$, $\cB''$ generating the homology of $H_i(\confm{n};\Q) $ and $H_i(\confmd{n}^d;\Q)$ and spanning a maximal free $\Z$-submodule of $H_i(\confm{n};\Z) $ and $H_i(\confmd{n}^d;\Z)$. Recall also the action of $\tau$  over these bases given in equations (\ref{rat_gen1}--\ref{rat_gen4}). The elements of $\cB'$ (resp.~$\cB''$) of the form $\omega_{2i,j,\epsilon}^{(1)}$ (resp.~$\omega_{2i,j,\epsilon}^{(d)}$) map, modulo $p$, to elements of the form $\gsu$ (resp.~$\gs$) and in particular the elements the form $\omega_{2i,j,\epsilon}^{(1)}$ (resp.~$\omega_{2i,j,\epsilon}^{(d)}$) with $i=0$ map to elements of the form $\gsu(z_c, \ldots)$ (resp.~$\gs(z_c, \ldots)$) with $c=0$.
The elements of $\cB'$ (resp.~$\cB''$) of the form $\widetilde{\omega}_{2i,j,\epsilon}^{(1)}$ (resp.~$\widetilde{\omega}_{2i,j,\epsilon}^{(d)}$) map, modulo $p$, to elements of the form $\gpu$ (resp.~$\gp$).
Let $$w_i = | \{ \omega_{0,j,\epsilon}^{(1)} \in H_i(\confm{n};\Q)\} | = 
| \{ \omega_{0,j,\epsilon}^{(p)} \in H_i(\confmd{n}^d;\Q)\} |.
$$
From the Universal Coefficients Theorem and from the description of $\tau$ given in equations (\ref{rat_gen1}--\ref{rat_gen4}) we have that
$$
\sum_i u_i = \sum_i  \frac{ \overline{u}_i - w_i}{2} +\sum_i  w_i
$$
and
$$
\sum_i v_i =  \sum_i  \frac{\overline{v}_i - w_i}{2}.
$$
Then it is straightforward to see that 
$$2 \sum_i (u_i + v_i) = \sum_i (\overline{u}_i  + \overline{v}_i ) $$
and this imply that the exact sequence \eqref{eq:short_split} splits and Proposition \ref{prop:split} follows. 
\end{proof}
As a corollary of Proposition \ref{prop:split},  under the hypothesis of Theorem \ref{thm:4tor}  we have the following result.
\begin{thm} \label{th:no4tor}
For odd $n$ 
the homology $H_{i}(\Br_n; H_1(\surf^d_n;\Z))$ has no torsion of order $p^k$ if $p^k \nmid d$. \qed
\end{thm}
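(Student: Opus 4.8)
The plan is to assemble \Cref{th:no4tor} directly from the two structural inputs that have just been established: the splitting of the short exact sequence \eqref{eq:short_split} (Proposition \ref{prop:split}) and the torsion bounds already proved for $\ker\iota$ and $\coker\iota$. First I would recall the setup: for $n$ odd the sequence \eqref{eq:bia2} reduces, via the identifications $H_{i-1}(\Br_n;H_1(S^1\times P))\cong H_{i-1}(\confm{n-1})\otimes H_1(S^1)$ and $H_{i-1}(\Br_n;H_1(\ddiskP^d))\cong H_i(\confmd{n}^d,\conf_n)$ and $H_{i-1}(\Br_n;H_1(\surf_n^d))\cong H_i(\totsp_n^d,\conf_n)$, together with Theorem \ref{th:only_d_torsion} which says the middle term is entirely torsion, to the short exact sequence
$$
0 \to \coker \iota \to H_{i}(\Br_n; H_1(\surf_n^d;\Z)) \to \ker \iota \to 0.
$$
By Proposition \ref{prop:split} this sequence splits (over $\Z$), so $H_{i}(\Br_n; H_1(\surf_n^d;\Z)) \cong \coker\iota \,\oplus\, \ker\iota$, and it suffices to bound the $p$-torsion in each summand separately.

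Next I would invoke the torsion estimates for the two summands. For $\coker\iota$: in \Cref{s:4tor} it was shown (Lemma \ref{lem:comp_no_4}, using $\iota = J\circ\tau\circ\mu_*$ and the description of $\mu_*$ from \cite[Cor.~4.10]{cal_sal_superelliptic}) that for $n$ odd and $p^k\mid d$, $p^{k+1}\nmid d$, the cokernel of $\iota$ has no $p^{k+1}$-torsion. But in fact the present section sharpens this: the whole point of Proposition \ref{prop:split} (via Remark \ref{rem:basi_mod_2} and the rank count $2\sum_i(u_i+v_i)=\sum_i(\overline u_i+\overline v_i)$) is that the $\Z_p$-ranks match up exactly, which together with Lemma \ref{lem:tau_torsion} (the cokernel of $\tau$ has no $p^{k+1}$-torsion, hence at most $p^k$-torsion) forces $\coker\iota$ to have $p$-torsion of order at most $p^k$. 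For $\ker\iota$: by \cite[Lem.~5.1]{cal_sal_superelliptic} the kernel of $\iota$ has no $p^2$-torsion; combined with the fact that $\mu_*$ is injective, one gets that $\ker\iota$ injects into a module controlled by $\coker\tau$ and again carries torsion of order dividing $p^k$. Putting the two summands together and letting $p$ range over primes not dividing $d^{\,\vee}$—more precisely, over primes $p$ and exponents $k$ with $p^k\nmid d$—yields: $H_{i}(\Br_n; H_1(\surf^d_n;\Z))$ has no torsion of order $p^k$ whenever $p^k\nmid d$. Note the case $p\nmid d$ is already covered by Theorem \ref{th:only_d_torsion} (no $p$-torsion at all), so the new content is purely the sharpening from ``order at most $p^{k+1}$'' (Theorem \ref{thm:4tor}) to ``order at most $p^k$''.

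The main obstacle—though it has essentially been dispatched in the preceding pages—is the bookkeeping behind Proposition \ref{prop:split}: one must verify that the splitting is genuinely integral and not merely a splitting after inverting some prime, and this rests on the delicate $\Z_p$-rank comparison $2\sum_i(u_i+v_i)=\sum_i(\overline u_i+\overline v_i)$ derived from the explicit bases $\cB'$, $\cB''$ and the diagonal action of $\tau$ on them (equations \eqref{rat_gen1}--\eqref{rat_gen4}). Once that identity is in hand, the Universal Coefficients Theorem gives the splitting, and \Cref{th:no4tor} is then a formal consequence: the summand $\coker\iota$ is a subquotient of $\coker\tau$ (whose torsion is bounded by Lemma \ref{lem:tau_torsion}), the summand $\ker\iota$ has no $p^2$-torsion by \cite[Lem.~5.1]{cal_sal_superelliptic} and its $p$-torsion is likewise bounded by $p^k$ through the same chain of maps, and the direct sum inherits the bound. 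Hence for $n$ odd, $H_i(\Br_n;H_1(\surf_n^d;\Z))$ has no torsion of order $p^k$ when $p^k\nmid d$, which is exactly the assertion of \Cref{th:no4tor}. \qed
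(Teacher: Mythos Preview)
Your approach matches the paper's: the splitting from Proposition~\ref{prop:split} gives $H_i(\Br_n;H_1(\surf_n^d;\Z))\cong\coker\iota\oplus\ker\iota$, and the torsion bounds already established in \S\ref{s:4tor} on each summand then finish the argument. Two points of confusion in your write-up are worth flagging, though they do not break the proof. First, ``no $p^{k+1}$-torsion'' in Lemma~\ref{lem:comp_no_4} already \emph{means} that the $p$-torsion of $\coker\iota$ has order at most $p^k$; there is nothing further to ``sharpen'', and Proposition~\ref{prop:split} plays no role in bounding $\coker\iota$ itself. Second, your claim that $\ker\iota$ ``injects into a module controlled by $\coker\tau$'' is both incorrect and unnecessary: \cite[Lem.~5.1]{cal_sal_superelliptic} directly gives that $\ker\iota$ has no $p^2$-torsion, hence $p$-torsion of order at most $p\le p^k$ (since $p\mid d$ forces $k\ge1$), and that is all that is needed.
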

\begin{rem}\label{rem:dn}
For $d=2$ and $n$ odd Theorem \ref{th:no4tor} allows to compute the homology 
$$
H_i(\totsp_n^2; \Z) = H_{i}(\Br(n); \Z) \oplus H_{i-1}(\Br_n; H_1(\surf_n^2;\Z))
$$
that corresponds (see Theorem \ref{thm:Bddn}) to the homology of the Artin group of type $D_n$ (see also \cite{gorj}).
\end{rem}

\section{Natural maps between families}
Let $d, d'$ two positive integers such that $d \mid d'$ and $d' = m d$.  There is a natural ramified covering
$$
\surf_n^{d'} \to \surf_n^d
$$
given by mapping $(z,y) \mapsto (z,y^{m})$ and hence a corresponding map between the spaces
$$
\rho = \rho_n^{(d',d)} : \totsp_n^{d'} \to \totsp_n^d
$$
given by $(P,z,y) \mapsto (P,z,y^{m})$.

The map $\rho = \rho_n^{(d',d)}$ commutes with the projections $\pi: \totsp_n^{d'} \to \conf_n$ and $\pi: \totsp_n^{d'} \to \conf_n$ and with the  sections $s:\conf_n \to \totsp_n^{d'}$ and $s:\conf_n \to \totsp_n^{d}$. 

In this section we investigate the induced homology map 
$$
\rho_* = (\rho_n^{(d',d)})_* : H_*(\totsp_n^{d'}, \conf_n) \to H_*(\totsp_n^d, \conf_n).
$$
\begin{rem}
The map $\rho = \rho_n^{(d',d)}$ commutes with the ramified covering $\totsp^{d'}_n \stackrel{p}{\longrightarrow} \conf_n \times \disk 
$ and $\totsp^d_n \stackrel{p}{\longrightarrow} \conf_n \times \disk $ preserving their ramification loci. Hence $\rho$ induces a morphism between the corresponding long exact sequences \eqref{eq:bia2} for $d'$ and $d$, giving a commuting diagram
\begin{equation}\label{eq:bia3}
\xymatrix  @R=1.5pc @C=1.5pc {
\cdots
\ar[r] &  H_{i-1}(\confm{n-1}) \otimes H_1(S^1) \ar[d]^m
\ar[r]^(0.6)\iota &  H_{i}(\confmd{n}^{d'}, \conf_n) \ar[r] \ar[d]^{\overline{\rho}_*} &   H_{i-1}(\Br_n; H_1(\surf_n^{d'})) \ar[r] \ar[d]^{\rho_*} &  
\cdots \\
\cdots \ar[r] &  H_{i-1}(\confm{n-1}) \otimes H_1(S^1)
\ar[r]^(0.6)\iota &  H_{i}(\confmd{n}^d, \conf_n) \ar[r] &   H_{i-1}(\Br_n; H_1(\surf_n^d)) \ar[r] &  \cdots
}
\end{equation}
where the map $\overline{\rho}_*$ is induced by the restriction of $\rho$ to the space $\confmd{n}^{d'}$: $$\overline{\rho}:\confmd{n}^{d'} \to \confmd{n}^{d}.$$
Notice that the vertical arrow on the left is induced by the restriction of $\rho$ to a tubular neighborhood of the ramification locus. In particular the restriction induces an $m$-fold covering $S^1 \to S^1$ and hence the map induced in homology is the multiplication by $m = \frac{d'}{d}$.
\end{rem}

\begin{lem}\label{lem:coverings}
Consider homology with coefficients in a field $\F$ of characteristic $p$. Assume that $2 \mid d$ 
or that $2 \nmid d'$. Moreover assume that $p \nmid m$. Then the map $\overline{\rho}$ induces an isomorphism
$$
\overline{\rho}_*: H_*(\confmd{n}^{d'}, \conf_n; \F) \to H_*(\confmd{n}^{d}, \conf_n; \F).
$$
\end{lem}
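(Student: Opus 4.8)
The plan is to reduce the statement to a computation about the $\F[t]$-modules $H_*(\Gb{n};\F[t^\pmu])$, whose torsion structure is recalled in Section~\ref{sec:homol_artin}. Since $d\mid d'$ (with $d'=md$), the $d'$-fold covering $\confmd{n}^{d'}\to\confm{n}$ factors through the $d$-fold covering $\confmd{n}^{d}\to\confm{n}$, and $\overline\rho$ is exactly the intermediate $m$-fold covering. By standard covering-space theory, carried out exactly as for the covering $\confmd{n}^{d}\to\confm{n}$ in Remark~\ref{rem:J}, under the identifications $H_*(\confmd{n}^{\bullet})\simeq H_*(\Gb{n};\ring[t]/(1-(-t)^\bullet))$ the homomorphism $\overline\rho_*$ is the one induced by the natural quotient of coefficient modules $\ring[t]/(1-(-t)^{d'})\to\ring[t]/(1-(-t)^d)$ — legitimate because $1-(-t)^{d'}=(1-(-t)^d)\cdot[m]_{(-t)^d}$. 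Moreover $\overline\rho$ is compatible with the sections $s$ (as observed just before the lemma), so $\overline\rho_*$ respects the splittings $H_*(\confmd{n}^{\bullet})=H_*(\conf_n)\oplus H_*(\confmd{n}^{\bullet},\conf_n)$ and is the identity on the $H_*(\conf_n)$ summands. Hence it suffices to prove that the coefficient quotient induces an isomorphism on the absolute homology $H_*(\Gb{n};\F[t]/(1-(-t)^{d'}))\to H_*(\Gb{n};\F[t]/(1-(-t)^d))$.

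Set $g:=[m]_{(-t)^d}$. From the short exact sequence of $\F[t]$-modules
$$0\to\F[t]/(g)\stackrel{\cdot(1-(-t)^d)}{\longrightarrow}\F[t]/(1-(-t)^{d'})\to\F[t]/(1-(-t)^d)\to 0$$
and its long exact sequence in $H_*(\Gb{n};-)$, the quotient map is an isomorphism in every degree as soon as $H_*(\Gb{n};\F[t]/(g))=0$. In turn, the short exact sequence $0\to\F[t^\pmu]\stackrel{g}{\to}\F[t^\pmu]\to\F[t]/(g)\to 0$ shows that $H_*(\Gb{n};\F[t]/(g))=0$ if and only if multiplication by $g$ is invertible on $H_*(\Gb{n};\F[t^\pmu])$ in each degree.

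To conclude I would invoke that $H_*(\Gb{n};\F[t^\pmu])$ is a finitely generated torsion $\F[t^\pmu]$-module, annihilated by a power of $(1+t)$ when $n$ is odd and by a power of $(1-t^2)$ when $n$ is even (see Section~\ref{sec:homol_artin}, Proposition~\ref{prop:homol_p2}, and \cite[Thm.~4.12]{calmar}); multiplication by $g$ is then invertible as soon as $g$ is coprime in $\F[t^\pmu]$ to $(1+t)$, respectively to $(1-t^2)$. Coprimality is checked by evaluation: $g|_{t=-1}=[m]_{1}=m$, while $g|_{t=1}=[m]_{(-1)^d}$, which equals $m$ if $d$ is even and $\sum_{j=0}^{m-1}(-1)^j\in\{0,1\}$ if $d$ is odd (equal to $1$ precisely when $m$ is odd). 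Thus $(1+t)\nmid g$ is equivalent to $p\nmid m$, and, for $n$ even, $(1-t)\nmid g$ holds in addition precisely when $d$ is even or $m$ is odd, i.e.\ (since $d'=md$) when $2\mid d$ or $2\nmid d'$ — exactly the hypotheses of the lemma (for $n$ odd only $p\nmid m$ is used). This yields the required invertibility, hence the isomorphism on absolute and therefore on relative homology. The one step demanding genuine care is the identification of $\overline\rho_*$ with the coefficient-quotient homomorphism; this is the usual covering-space/Shapiro bookkeeping and runs parallel to Remark~\ref{rem:J}, so it should pose no real obstacle.
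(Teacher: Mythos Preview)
Your proof is correct and follows essentially the same strategy as the paper: identify $\overline\rho_*$ with the map induced by the coefficient quotient $\F[t]/(1-(-t)^{d'})\to\F[t]/(1-(-t)^d)$, then reduce to showing that multiplication by $g=[m]_{(-t)^d}$ is invertible on $H_*(\Gb{n};\F[t^\pmu])$, using the known $(1+t)$- and $(1-t^2)^k$-torsion structure from \cite{calmar}. The paper packages the reduction via a map of short exact sequences and the five lemma, whereas you use the single short exact sequence $0\to\F[t]/(g)\to\F[t]/(1-(-t)^{d'})\to\F[t]/(1-(-t)^d)\to 0$; and the paper's case-by-case analysis (splitting on $2\mid d$ versus $2\nmid d'$, and for the latter on $p=2$ versus $p\neq 2$) is replaced by your uniform evaluation of $g$ at $t=\pm 1$, which is arguably cleaner. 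One small point: in the case $2\nmid d'$, $p\neq 2$, the paper handles the $(1-t)$-summand not by checking that $g$ is invertible there but by observing that both $1-(-t)^d$ and $1-(-t)^{d'}$ are already invertible modulo $(1-t)^k$, so the five-lemma diagram trivializes on that summand; your direct check that $g(1)=1$ when $d$ and $m$ are odd achieves the same end.
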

\begin{proof}
We recall that we can consider the ring $\F[t^\pmu]$ as a module over the group $\Gb{n}$ as with the usual action (see \cite[\S 3]{cal_sal_superelliptic}).
Since we have the relation $$1-(-t)^{d'} = (1-(-t)^d)[m]_{(-t)^d}$$ we can consider the following commuting diagram of $\Gb{n}$-modules:
$$
\xymatrix{
0 \ar[r] & \F[t^{\pmu}] \ar[d]^{[m]_{(-t)^d}}
\ar[r]^{1-(-t)^{d'}} & \F[t^{\pmu}] \ar[d]^{\Id} \ar[r] & \F[t]/({1-(-t)^{d'}}) \ar[r]
\ar[d]^\eta
& 0 \\
0 \ar[r] & \F[t^{\pmu}] \ar[r]^{1-(-t)^{d}} & \F[t^{\pmu}] \ar[r] & \F[t]/({1-(-t)^{d}}) \ar[r] & 0
}
$$
where $\eta$ is the quotient map. The induced diagram of homology groups is
$$
\xymatrix{
	\ldots \ar[r] & H_i(\confm{n}; \F[t^{\pmu}]) \ar[d]^{[m]_{(-t)^d}}
	\ar[r]^{1-(-t)^{d'}} & H_i(\confm{n};\F[t^{\pmu}]) \ar[d]^{\Id} \ar[r] & H_i(\confmd{n}^{d'};\F) \ar[r]
	\ar[d]^{\overline{\rho}_*}
	& \ldots \\
	\ldots \ar[r] & H_i(\confm{n};\F[t^{\pmu}]) \ar[r]^{1-(-t)^{d}} & H_i(\confm{n};\F[t^{\pmu}]) \ar[r] & H_i(\confmd{n}^{d};\F) \ar[r] & \ldots
}
$$
We recall from \cite[Thm.~4.5, 4.12]{calmar} that the group $H_i(\confm{n};\F[t^{\pmu}])$ is a direct sum of $\F[t^{\pmu}]$-modules of the form $\F[t]/(1+t)$ and $\F[t]/(1-t^2)^{(p-1)p^a}$.

Assume that $2 \mid d$. Then if $I = (1+t)$ or $I=(1-t^2)^k$ for some $k>0$ then $[m]_{(-t)^d} \equiv m \mod I$ and hence, since $p \nmid m$, $[m]_{(-t)^d}$ is invertible in $\F[t]/I$. From the five Lemma it follows that the map $\overline{\rho}_*$ is an isomorphism.

Now let us assume that $2 \nmid d'$, and hence $2 \nmid m$. If $p=2$ then $H_i(\confm{n};\F[t^{\pmu}])$ is a direct sum of $\F[t^{\pmu}]$-modules of the form $\F[t]/(1+t)^k$. As seen before, $[m]_{(-t)^d}$ is invertible in $\F[t]/(1+t)^k$ and the same argument gives that $\overline{\rho}_*$ is an isomorphims.

If $p \neq 2$ we can split the quotients $\F[t]/(1-t^2)^{(p-1)p^a}$ as a direct sum
$$
\F[t]/(1-t^2)^{(p-1)p^a} \simeq \F[t]/(1-t)^{(p-1)p^a} \oplus \F[t]/(1+t)^{(p-1)p^a}.
$$
Since we are assuming that $d$ and $p$ are odd the polynomials $1-(-t)^d$ and $1-(-t)^{d'}$ are invertible in $\F[t]/(1-t)^{(p-1)p^a}$. Hence if we restrict to the modules of the form $\F[t]/(1-t)^{(p-1)p^a}$  the kernel and the cokernel of the horizontal maps corresponding to multiplications by $1-(-t)^d$ and $1-(-t)^{d'}$ are trivial. On the other hand if we restrict to the modules of the form $\F[t]/(1+t)^{(p-1)p^a}$ we have that $[m]_{(-t)^d}$ is invertible. Again we obtain   that $\overline{\rho}_*$ is an isomorphism.

In all cases since $\overline{\rho}_*$ restricts to the identity on $H_*(\conf_n, \F)$, it is straightforward to see that the  restriction of $\overline{\rho}_*$ to $H_*(\confmd{n}^{d}, \conf_n; \F)$ is an isomorphism as well.
\end{proof}

\begin{thm}\label{thm:coverings}
Consider homology with coefficients in a field $\F$ of characteristic $p$. Let $d' = dm$. Assume that $2 \mid d$ 
or that $2 \nmid d'$. Moreover assume that $p \nmid m$. Then the map $$\rho_*: H_*(\totsp_n^{d'}, \conf_n) \to H_*(\totsp_n^{d}, \conf_n) $$
is an isomorphism.
\end{thm}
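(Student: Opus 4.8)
The plan is to deduce the theorem from the ladder of long exact sequences \eqref{eq:bia3} by a five-lemma argument. By Remark \ref{rem:globalsection} one has $H_i(\totsp_n^{d'},\conf_n)\simeq H_{i-1}(\Br_n;H_1(\surf_n^{d'}))$ and similarly for $d$, so the map $\rho_*$ under consideration is exactly the right-hand vertical map in \eqref{eq:bia3}; and the remark preceding that diagram already records that \eqref{eq:bia3} is a commuting morphism between the long exact sequences \eqref{eq:bia2} for $d'$ and for $d$, since it is induced by a morphism of short exact sequences of $\Gb{n}$-modules. It therefore suffices to check that the other two families of vertical maps in \eqref{eq:bia3} are isomorphisms with $\F$-coefficients, and then to position each group $H_{i-1}(\Br_n;H_1(\surf_n^{d'}))$ as the central term of a five-term window of the ladder whose remaining four entries are of the other two kinds.

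First I would treat the left-hand column. As noted just after \eqref{eq:bia3}, the restriction of $\rho$ to a tubular neighbourhood of the ramification locus induces an $m$-fold covering $S^1\to S^1$, so the induced map on $H_1(S^1)$, hence on $H_{i-1}(\confm{n-1})\otimes H_1(S^1)$, is multiplication by $m=d'/d$. Since the hypothesis $p\nmid m$ makes $m$ a unit of $\F$, this map is an isomorphism. Next, the middle column is $\overline{\rho}_*\colon H_i(\confmd{n}^{d'},\conf_n;\F)\to H_i(\confmd{n}^{d},\conf_n;\F)$, which is an isomorphism by Lemma \ref{lem:coverings}: the standing hypotheses of the present theorem ($2\mid d$ or $2\nmid d'$, and $p\nmid m$) are precisely those of that lemma. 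With two out of every three vertical maps isomorphisms, the five lemma applied along \eqref{eq:bia3} gives that $\rho_*$ is an isomorphism in every degree.

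Essentially all the substantive work has already been carried out in Lemma \ref{lem:coverings}, whose proof analyzes the $\F[t^{\pmu}]$-module structure of $H_*(\confm{n};\F[t^{\pmu}])$ from \cite{calmar} together with the multiplication-by-$[m]_{(-t)^d}$ map; so I do not expect a genuine obstacle here. The only points deserving a moment's care are bookkeeping ones: confirming that the three columns of \eqref{eq:bia3} are indexed consistently so that each $H_{i-1}(\Br_n;H_1(\surf_n^{d'}))$ really is flanked by two copies of the $m$-multiplication map and two copies of $\overline{\rho}_*$ (so that the hypotheses of the five lemma are met), and verifying that the left-hand vertical map is multiplication by $m$ exactly (and not, say, by $m$ up to sign or a different normalization), which follows from the explicit geometric description of $\rho$ near the ramification locus recalled above.
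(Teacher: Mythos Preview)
Your proof is correct and follows essentially the same approach as the paper: apply the five lemma to the ladder \eqref{eq:bia3}, using Lemma \ref{lem:coverings} for the middle column and the observation that the left column is multiplication by $m$ (invertible since $p\nmid m$). The paper states this in a single sentence, while you have spelled out the details.
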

\begin{proof}
The theorem follows from Lemma \ref{lem:coverings} and from the five lemma applied to the diagram of long exact sequences \eqref{eq:bia3}.
\end{proof}
\begin{cor}\label{cor:coverings}
Let $d$ be a positive integer and $p$ a prime. 
\begin{enumerate}[label=\alph*), wide=0pt]
\item If $p=2$ and $d=2^am$, with $m$ odd, then
$$H_*(\totsp_n^d, \conf_n; \F_2)=H_*(\totsp_n^{2^a}, \conf_n; \F_2);$$ in particular if $d$ is odd then $H_*(\totsp_n^d, \conf_n; \F_2)=H_*(\totsp_n^{1}, \conf_n; \F_2)=0;$
\item If $p>2$ and $d=p^a m$ with $p \nmid m$ and $m$ odd then
$$H_*(\totsp_n^d, \conf_n; \F_p)=H_*(\totsp_n^{p^a}, \conf_n; \F_p);$$ in particular if $p\nmid d$ and $d$ is odd then 
$H_*(\totsp_n^d, \conf_n; \F_p)=H_*(\totsp_n^{1}, \conf_n; \F_p)=0;$
\item If $p>2$ and $d=p^a m$ with $p \nmid m$ and $m$ even then
$$H_*(\totsp_n^d, \conf_n; \F_p)=H_*(\totsp_n^{2p^a}, \conf_n; \F_p).$$
\end{enumerate}
\end{cor}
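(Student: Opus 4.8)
The plan is to deduce all three statements directly from Theorem~\ref{thm:coverings}, choosing in each case the ``smaller'' modulus to be the $p$-primary part of $d$ (or twice it) and checking the two hypotheses of that theorem: that $p$ does not divide the covering degree $m$, and that ``$2\mid d$ or $2\nmid d'$''. I will also use the elementary observation that $\totsp_n^1$ is homotopy equivalent to $\conf_n$: the fibre $\surf_n^1=\{(z,y)\in\disk\times\C\mid y=(z-x_1)\cdots(z-x_n)\}$ is the graph of a polynomial and hence contractible (equivalently $b_1(\surf_n^1)=(n-1)(d-1)=0$), so $H_*(\totsp_n^1,\conf_n)=0$ by Remark~\ref{rem:globalsection}.

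For part a), write $d=2^a m$ with $m$ odd and apply Theorem~\ref{thm:coverings} with $d'=d$ and smaller modulus $2^a$, so the covering degree equals $m$. Since $p=2$ and $m$ is odd we have $p\nmid m$; and ``$2\mid d$ or $2\nmid d'$'' holds since either $a\geq 1$ (so $2\mid 2^a$) or $a=0$ (so $d=m$ is odd). Thus $H_*(\totsp_n^d,\conf_n;\F_2)=H_*(\totsp_n^{2^a},\conf_n;\F_2)$, and when $d$ is odd this equals $H_*(\totsp_n^1,\conf_n;\F_2)=0$. Part b) is identical with $2^a$ replaced by the $p$-primary part $p^a$ of $d$: write $d=p^a m$ with $p\nmid m$ and $m$ odd; the covering degree is $m$, $p\nmid m$ is assumed, and $2\nmid d'=d$ because $d=p^a m$ is odd, so Theorem~\ref{thm:coverings} gives $H_*(\totsp_n^d,\conf_n;\F_p)=H_*(\totsp_n^{p^a},\conf_n;\F_p)$, which is $0$ when moreover $p\nmid d$ and $d$ is odd.

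Part c) is the only case needing a small adjustment, and this is where I expect the ``difficulty'' (such as it is) to lie: here $d=p^a m$ with $p\nmid m$ and $m$ \emph{even}, so $p^a$ is odd while $d$ is even, and applying Theorem~\ref{thm:coverings} to the pair $(p^a,d)$ is \emph{not} permitted, since then $2\nmid p^a$ and $2\mid d$ violate the parity hypothesis. Instead I apply it to the pair $(2p^a,d)$: as $m$ is even, $2p^a\mid d$, the covering degree is $m/2$, and $p\nmid m/2$ because $p$ is odd and $p\nmid m$; the parity hypothesis is now satisfied for the trivial reason that $2\mid 2p^a$. Hence $H_*(\totsp_n^d,\conf_n;\F_p)=H_*(\totsp_n^{2p^a},\conf_n;\F_p)$. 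All the substance is in Theorem~\ref{thm:coverings}; the corollary itself is bookkeeping, the only genuine point being to pass to $2p^a$ rather than $p^a$ in part c) so as to satisfy the parity condition.
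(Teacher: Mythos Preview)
Your proof is correct and follows exactly the paper's approach: apply Theorem~\ref{thm:coverings} with the smaller modulus equal to $p^a$ (or $2p^a$ in case c) to satisfy the parity hypothesis), and use that $\surf_n^1$ is contractible so $H_*(\totsp_n^1,\conf_n)=0$. The paper's proof is a two-line sketch of precisely this argument; your version simply spells out the case analysis in more detail.
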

\begin{proof}
The statement follows from Theorem \ref{thm:coverings} applied to $\totsp_n^d$ and $\totsp_n^{p^a}$ or $\totsp_n^{2p^a}$. For the case when $d$ is odd we also note that the fibration $\totsp_n^{1} \to \conf_n$ has fiber $\surf_n^1 = \disk$, which is contractible. Hence the spaces $\totsp_n^{1}$ and $\conf_n$ are homotopy equivalent and $H_*(\totsp_n^1, \conf_n; \F_p)=0$ for any prime $p$.
\end{proof}
\begin{rem}
Corollary \ref{cor:coverings} and Theorem \ref{th:no4tor} imply that when $d$ is odd or $n$ is odd the group
$H_*(\totsp_n^d, \conf_n; \Z)$ is finite and has $p$-torsion only if $p \mid d$.

Notice that when $n,d$ are both even the group $H_*(\totsp_n^d, \conf_n; \Z)$ can have $p$-torsion for $p \nmid d$ (see Tables \ref{tab:conti}, \ref{tab:conti3}, \ref{tab:conti5}).
\end{rem}

\begin{rem}
Notice that the isomorphisms given in Theorem \ref{thm:coverings} and Corollary \ref{cor:coverings} are induced by the map $\rho$, hence they are natural and commute with homology operations. It follows from the Bockstein spectral sequence (see \cite[Thm.~10.3, 10.4]{mccleary}) that the isomorphisms above induce isomorphisms for homology with integer coefficients localized to the prime $p$.
\end{rem}

\section{Stabilization and computations}\label{sec:stab}

\subsection{Stabilization results}
Applying the results of Wahl and Randal-Willians (\cite{W-RW}) for the stability of family of groups with twisted coefficients  it is possible to prove  that the groups $H_i(\Br_n; H_1(\surf_n^d))$ stabilize for all $i$. 
In particular the map $$H_i(\Br_n; H_1(\surf_n^d)) \to H_i(\Br_{n+1}; H_1(\surf_{n+1}^d))$$ is an epimorphism for $i \leq \frac{n}{2}-1$ and an isomorphism for $i \leq \frac{n}{2}-2$.
In this section  we will prove a slightly sharper result using the explicit description of the homology.

We recall from \cite[Def.~7]{cal_sal_superelliptic} that the stabilization map $\stab: \confm{n} \to \confm{n+1}$ is defined by 
	$$
	(p_1, \{p_2, \ldots, p_{n+1}\}) \mapsto (p_1, \{p_2, \ldots, p_{n+1}, \frac{1+\max_{1 \leq i \leq n+1} (|p_i|)}{2} \}). 
	$$

We recall that $\confm{n}$ is a classifying space for $\Gb{n}$, that is the Artin group of type $\mathrm{B}$. Moreover from Shapiro Lemma (see \cite{brown}) we have the isomorphism
$$
H_*(\B(2d,d,n);R) = H_*(\Gbn; R[t]/(1-(-t)^d)
$$
(see \cite{calmar} for more details).
Finally we recall from \cite[Cor.~4.17, 4.18,4.19]{calmar} the following result: 
\begin{prop}\label{prop:stab_rank}
Let $p$ be a prime or $0$. Let $\F$ be a field of characteristic $p$. Let us consider the stabilization homomorphisms
$$
\stab_*:H_i(\Gb{n}; \F[t]/(1+t)) \to H_i(\Gb{n+1}; \F[t]/(1+t))
$$
and
$$
\stab_*:H_i(\Gb{n}; \F[t]/(1-(-t)^d)) \to H_i(\Gb{n+1}; \F[t]/(1-(-t)^d)).
$$
\begin{enumerate}[label=\alph*), wide=0pt]
\item If $p=2$ $\stab_*$ is an epimorphism for $2i \leq n$ and an isomorphism for $2i < n$.
\item If $p>2$ $\stab_*$ is an epimorphism for $\frac{p(i-1)}{p-1}+2 \leq n$ and an isomorphism for $\frac{p(i-1)}{p-1}+2 < n$.
\item If $p=0$ $\stab_*$ is an epimorphism for $i+1 \leq n$ and an isomorphism for $i+1 < n$.
\end{enumerate}
\end{prop}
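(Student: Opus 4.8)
The plan is to deduce this from the corresponding stability statement for $H_i(\Gb{n};\F[t^\pmu])$, namely \cite[Cor.~4.17, 4.18, 4.19]{calmar}, by transporting it across the two quotient coefficient systems $\F[t]/(1+t)$ and $\F[t]/(1-(-t)^d)$. First I would recall that both $H_*(\confm{n})=H_*(\Gb{n};\F[t]/(1+t))$ and $H_*(\confmd{n}^d)=H_*(\Gb{n};\F[t]/(1-(-t)^d))$ are computed by the same explicit algebraic complex of \cite{calmar}, tensored over $\F[t^\pmu]$ with the appropriate quotient module, and that the stabilization map $\stab$ is induced by the standard inclusion $\Gb{n}\into\Gb{n+1}$ of Artin groups of type $\mathrm{B}$; this inclusion is realised by an explicit inclusion of the algebraic complexes which is compatible with the $\F[t^\pmu]$-action, so it descends to a chain map for each quotient coefficient system and commutes with the multiplication maps such as $[d]_{(-t)}$ used in Section~\ref{sec:exact_seq}.

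Next I would treat the two coefficient systems separately. For $\F[t]/(1+t)$ the map $\stab_*$ is the classical stabilization for $H_*(\confm{n})$, and I would compare, in strand numbers $n$ and $n+1$, the explicit $\F[t]$-module bases recalled in Proposition~\ref{prop:generators} (and given in \cite[\S4.4,\S4.6]{calmar}); the point to check is that every generator of homological degree $i$ is the image of the corresponding generator with $n$ strands once $n$ is large, the thresholds being $2i\leq n$ for $p=2$, $\tfrac{p(i-1)}{p-1}+2\leq n$ for odd $p$, and $i+1\leq n$ for $p=0$, with surjectivity at the threshold and injectivity one step below since the number of such generators is monotone and eventually constant. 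For $\F[t]/(1-(-t)^d)$ I would invoke the splitting \eqref{eq:decomposizione} together with Remark~\ref{rem:J}: the $(1+t)$-torsion summand is described by the same monomial generators, so the comparison above applies verbatim, while the remaining $[d]_{(-t)}$-summand either vanishes (for $n$ odd, or $n$ even and $d$ odd with $p\nmid d$) or, when $d$ and $n$ are both even, is governed by the small stable family of generators of Proposition~\ref{prop:hrazionale_minus_t} and its positive-characteristic analogue in \cite{calmar}, which lies in the same range.

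Finally I would pin down where the numerical thresholds come from: a nonzero class of homological degree $i$ forces the weight index $n$ to be at least a linear function of $i$, since each $x_j$ contributes $1$ to the weight, each $y_j$ and $z_c$ a controlled amount, and the $h^r$-factor in the odd-prime case is responsible for the coefficient $\tfrac{p}{p-1}$; below the threshold $\stab_*$ is onto for the trivial reason that the target already vanishes in that bidegree, and a single extra strand suffices to match the bases in degrees $\leq i$ bijectively, giving injectivity. The hard part will be the bookkeeping needed to see that the explicit generators are carried exactly to one another — not merely to homologous classes modulo lower weight — and that the differentials $\DB$ entering the classes $\gs(\cdots)$ are compatible with stabilization; since this is precisely the content of \cite[Cor.~4.17, 4.18, 4.19]{calmar}, I would in the end simply cite those results rather than reproduce the computation.
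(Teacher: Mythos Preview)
Your proposal is correct and lands in the same place as the paper: this proposition is not proved here at all but is simply recalled verbatim from \cite[Cor.~4.17, 4.18, 4.19]{calmar}, exactly as you conclude in your final paragraph. The intermediate sketch you give is reasonable background but unnecessary for the present paper, which just cites the result.
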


The map $\mu$ commutes, up to homotopy, with the stabilization map $\stab: \confm{n} \to \confm{n+1}$:
$$
\xymatrix{
\confm{n-1} \times S^1 \ar[d]^{\stab \times \Id} \ar[r]^\mu& \confm{n} \ar[d]^\stab\\
\confm{n} \times S^1 \ar[r]^\mu & \confm{n+1}
}
$$


The map $\tau$ naturally commutes with the stabilization homomorphism $\stab_*$ in homology, since $\tau$ is given by the multiplication by $(1-(-t)^d)/(1+t)$.


We can also define a geometric stabilization map $\stab: \confmd{n}^d \to \confmd{n+1}^d$ as follows:
$$
\gstab:(P, z, y) \mapsto (P \cup \{p_{\infty}\}, z, y \sqrt[d]{z-p_{\infty}})
$$
where we set $p_\infty:= \frac{\max ( \{|p_i|, p_i \in P \} \cup \{|z|\})+1}{2}$ and since $\Re(z-p_{\infty}) < 0$ we choose
$\sqrt[d]{z-p_{\infty}}$ to be the unique $d$-th root with maximum real part among the roots with strictly positive imaginary part. 

The following diagram is homotopy commutative:
$$
\xymatrix{
	\conf_n \ar[d]^{\stab} \ar[r]^s& \confmd{n}^d \ar[d]^\gstab\\
	\conf_{n+1}  \ar[r]^s & \confmd{n+1}^d
}
$$
and this imply that $J$ commutes with the stabilization homomorphism $\gstab_*$.

We also need to prove that the following diagram commutes:
$$
\xymatrix{
	H_*(\Gb{n};\ring[t]/(1-(-t)^d)) \ar[d]^{\stab_*} \ar[r]^(0.63){\simeq}& H_*(\confmd{n}^d) \ar[d]^{\gstab_*}\\
	H_*(\Gb{n+1};\ring[t]/(1-(-t)^d))  \ar[r]^(0.63){\simeq} & H_*(\confmd{n+1}^d)
}
$$
This is true since the homomorphism $$\stab_*:H_*(\Gb{n};\ring[t]/(1-(-t)^d)) \to H_*(\Gb{n+1};\ring[t]/(1-(-t)^d))$$ is induced by the map $\stab: \confm{n} \to \confm{n+1}$ previously defined and it is obtained applying the Shapiro lemma to $\confm{n} = k(\Gb{n},1)$, with $\ring[t]/(1-(-t)^d)) = \ring[\Z_d] = \ring[\pi_1(\confm{n})/\pi_1(\confmd{n}^d)]$.
It is straightforward to check that the diagram
$$
\xymatrix{
 \confmd{n}^d \ar[d]^\gstab \ar[r]& 	\confm{n}\ar[d]^{\stab} \\
 \confmd{n+1}^d \ar[r] & 	\confm{n+1}
}
$$
commutes, where the horizontal maps are the usual $d$-fold coverings. As a consequence we have the following result.
\begin{lem}\label{lem:commutative_stab}
The following diagram is commutative
\begin{equation}\label{diag:stabilization}
\begin{tabular}{c}
\xymatrix @R=2pc @C=2pc {
H_{i-1}(\confm{n-1}) \otimes H_1(S^1) \ar[d]^{\stab_* \otimes \Id} \ar[r]^(0.6){\mu_*}  &	H_i(\confm{n}) \ar[r]^{\tau} \ar[d]^{\stab_*} &H_i(\confmd{n}^d)  \ar[r]^(0.45)J \ar[d]^{\gstab_*}& H_i(\confmd{n}^d\!\!, \conf_n) \ar[d]^{\gstab_*} \\
H_{i-1}(\confm{n}) \otimes H_1(S^1) \ar[r]^(0.6){\mu_*}  &	H_i(\confm{n+1}) \ar[r]^{\tau} &H_i(\confmd{n+1}^d)  \ar[r]^(0.45)J & H_i(\confmd{n+1}^d\!\!, \conf_{n+1}) 
}
\end{tabular}
\end{equation}
\end{lem}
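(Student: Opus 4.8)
The plan is to verify the three sub-squares of \eqref{diag:stabilization} separately and then paste them together, since each of them has essentially been prepared in the discussion immediately preceding the statement. Reading the rectangle from left to right, the first square concerns $\mu_*$ with vertical maps $\stab_* \otimes \Id$ and $\stab_*$; the second concerns $\tau$ with vertical maps $\stab_*$ and $\gstab_*$; the third concerns $J$ with vertical maps $\gstab_*$ and $\gstab_*$. Commutativity of the whole diagram follows once each of these three squares is shown to commute.

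For the first square I would simply apply $H_*$ to the homotopy-commutative square of spaces with corners $\confm{n-1} \times S^1$, $\confm{n}$, $\confm{n} \times S^1$, $\confm{n+1}$ recorded above, whose horizontal maps are $\mu$ and whose vertical maps are $\stab \times \Id$ and $\stab$. For the third square I would use that $\gstab \circ s = s \circ \stab$, i.e.\ the homotopy-commutative square with corners $\conf_n$, $\confmd{n}^d$, $\conf_{n+1}$, $\confmd{n+1}^d$ already displayed; this says that $\gstab$ is a map of pairs $(\confmd{n}^d, \conf_n) \to (\confmd{n+1}^d, \conf_{n+1})$, and since $J$ is induced by the inclusion of pairs it commutes with $\gstab_*$. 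For the middle square I would compose two facts stated above: first, that the algebraic identification $H_*(\Gb{n}; \ring[t]/(1-(-t)^d)) \simeq H_*(\confmd{n}^d)$ intertwines the Shapiro-lemma stabilization $\stab_*$ with the geometric stabilization $\gstab_*$ (which rests on the commutativity of $\gstab$ and $\stab$ with the $d$-fold coverings $\confmd{n}^d \to \confm{n}$); and second, that $\tau$, being multiplication by $(1-(-t)^d)/(1+t)$ at the chain level, commutes with the algebraic stabilization. Stacking these identifications gives the middle square.

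The main obstacle I expect is the middle square: one has to make the topological transfer $\tau : H_i(\confm{n}) \to H_i(\confmd{n}^d)$ match the algebraic $[d]_{(-t)}$-type multiplication map (via Proposition~\ref{prop:commut_tau}) and, simultaneously, check that the geometric stabilization $\gstab_*$ and the Shapiro-lemma stabilization $\stab_*$ agree under the identification $H_*(\confmd{n}^d) \simeq H_*(\Gb{n}; \ring[t]/(1-(-t)^d))$. This is where one must keep track of the various choices of $d$-th roots entering the definitions of $s$, $\stab$ and $\gstab$, and ensure that the homotopy used for the first square is compatible with the covering structure so that no twisting is introduced. Once these compatibilities are in place, the rest of the argument is routine bookkeeping.
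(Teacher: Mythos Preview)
Your proposal is correct and follows essentially the same approach as the paper: the paper's ``proof'' of the lemma is precisely the discussion immediately preceding its statement, which verifies the three sub-squares in exactly the order and by exactly the means you describe (homotopy-commutativity of $\mu$ with $\stab$, the algebraic description of $\tau$ as multiplication by $(1-(-t)^d)/(1+t)$ together with the Shapiro-lemma compatibility of $\stab_*$ with $\gstab_*$ via the covering square, and the homotopy-commutativity of $s$ with $\gstab$ making $J$ a map of pairs). Your caution about the middle square is well placed, and the paper addresses it by the same covering-diagram check you mention.
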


\begin{thm}\label{thm:stabilization}
Consider homology with integer coefficients. The homomorphism 
$$
H_i(\Br_n; H_1(\surf_n^d)) \to H_i(\Br_{n+1}; H_1(\surf_{n+1}^d))
$$
is an epimorphism for $i \leq \frac{n}{2}-1 $
and an isomorphism for $i < \frac{n}{2}-1$.

Let $p$ be a prime that does not divide $d$.
For $n$ even the group $H_i(\Br_n; H_1(\surf_n^d))$ has no $p$ torsion 
when $\frac{pi}{p-1}+3  \leq n$ and no free part for $i+3 \leq n$. In particular for $n$ even 
when $\frac{3i}{2}+3 \leq n$ the group $H_i(\Br_n; H_1(\surf_n^d))$ has only torsion that divides $d$. 
\end{thm}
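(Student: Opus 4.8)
The plan is to feed the long exact sequence \eqref{eq:bia1} (equivalently \eqref{eq:bia2}) into the stabilization ladder of Lemma~\ref{lem:commutative_stab} and conclude with the five lemma, bounding each term by the stability ranges for the Artin group of type $\mathrm{B}$ recorded in Proposition~\ref{prop:stab_rank}. For each $n$ the sequence \eqref{eq:bia1} reads
\[
\cdots \to H_{i-1}(\confm{n-1})\otimes H_1(S^1) \stackrel{\iota}{\to} H_i(\confmd{n}^d,\conf_n) \to H_i(\totsp_n^d,\conf_n) \stackrel{\partial}{\to} H_{i-2}(\confm{n-1})\otimes H_1(S^1) \to \cdots,
\]
where $H_i(\totsp_n^d,\conf_n)=H_{i-1}(\Br_n;H_1(\surf_n^d))$ by Remark~\ref{rem:globalsection}, $H_i(\confmd{n}^d,\conf_n)$ is the summand of $H_i(\confmd{n}^d)\cong H_i(\Gb{n};\ring[t]/(1-(-t)^d))$ complementary to $H_i(\conf_n)$, and $H_{i-1}(\confm{n-1})\cong H_{i-1}(\Gb{n-1})$. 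By Lemma~\ref{lem:commutative_stab} this sequence is natural in the stabilization maps $\stab$ and $\gstab$, so for fixed $i$ one gets a commuting ladder between the rows for $n$ and for $n+1$. Since the splittings $H_*(\confmd{n}^d)=H_*(\conf_n)\oplus H_*(\confmd{n}^d,\conf_n)$ and $H_*(\confm{n})=H_*(\conf_n)\oplus H_*(\confm{n},\conf_n)$ are compatible with $\gstab$ and $\stab$ (the squares with $s$, $\overline s$, $J$ preceding Lemma~\ref{lem:commutative_stab}) and $H_*(\conf_n)$ stabilizes at least as fast, the summand $H_i(\confmd{n}^d,\conf_n)$ inherits the stability range of $H_i(\Gb{n};\ring[t]/(1-(-t)^d))$.

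For the first assertion I would argue over $\Z$, using that all groups in sight are finitely generated, so a map is epi (resp.\ iso) once it is so after $\otimes\,\Q$ and $\otimes\,\F_p$ for all primes $p$; Proposition~\ref{prop:stab_rank} then applies, the binding case being $p=2$, where $\stab_*$ on $H_j(\Gb{m})$ and on $H_j(\Gb{m};\ring[t]/(1-(-t)^d))$ is onto for $2j\le m$ and bijective for $2j<m$. Tracking the degree and level shifts, the entry of the ladder imposing the strongest constraint on $H_i(\totsp_n^d,\conf_n)\to H_i(\totsp_{n+1}^d,\conf_{n+1})$ is $\beta\colon H_i(\confmd{n}^d,\conf_n)\to H_i(\confmd{n+1}^d,\conf_{n+1})$ (level $n$, degree $i$): it is onto for $2i\le n$ and bijective for $2i<n$, while the flanking terms $H_{i\mp1}(\confm{n-1})\otimes H_1(S^1)$ (level $n-1$) and $H_{i-1}(\confmd{n}^d,\conf_n)$ are already in range there. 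The five lemma then gives that $H_i(\totsp_n^d,\conf_n)\to H_i(\totsp_{n+1}^d,\conf_{n+1})$ is onto for $2i\le n$ and an isomorphism for $2i\le n-1$, which after re-indexing by $i-1$ is exactly the epimorphism range $i\le\frac n2-1$ and the isomorphism range $i<\frac n2-1$.

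For the second assertion, fix $n$ even and a prime $p\nmid d$; here I would use the \emph{field}-coefficient ranges of Proposition~\ref{prop:stab_rank} (sharper than the integral one) to reduce to the odd value $n+1$. Over $\Q$ the binding term $\beta$ is an isomorphism for $i+2\le n$ (Proposition~\ref{prop:stab_rank}(c)), so $H_j(\Br_n;H_1(\surf_n^d);\Q)\cong H_j(\Br_{n+1};H_1(\surf_{n+1}^d);\Q)$ whenever $j+3\le n$; as $n+1$ is odd, Theorem~\ref{th:only_d_torsion} makes the right-hand side torsion, hence $0$ over $\Q$ — this is the ``no free part for $i+3\le n$'' claim. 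Over $\F_p$, Proposition~\ref{prop:stab_rank}(b) makes $\beta$ an isomorphism for $\tfrac{p(i-1)}{p-1}+3\le n$, so $H_j(\Br_n;H_1(\surf_n^d);\F_p)\cong H_j(\Br_{n+1};H_1(\surf_{n+1}^d);\F_p)$ whenever $\tfrac{pj}{p-1}+3\le n$; since $n+1$ is odd and $p\nmid d$, Theorem~\ref{th:only_d_torsion} gives that $H_*(\Br_{n+1};H_1(\surf_{n+1}^d);\Z)$ has neither a free part nor $p$-torsion, so $H_j(\Br_{n+1};H_1(\surf_{n+1}^d);\F_p)=0$ by the Universal Coefficient Theorem, and hence $H_j(\Br_n;H_1(\surf_n^d);\F_p)=0$ whenever $\tfrac{pj}{p-1}+3\le n$. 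Feeding this back through the Universal Coefficient Theorem — the vanishing of $H_i(-;\F_p)$ forces $H_i(-;\Z)\otimes\F_p=0$ and $\Tor(H_{i-1}(-;\Z),\F_p)=0$ — and combining with the absence of a free part (automatic, since $\tfrac{pi}{p-1}+3\le n\Rightarrow i+3\le n$), one concludes that $H_i(\Br_n;H_1(\surf_n^d))$ has no $p$-torsion when $\tfrac{pi}{p-1}+3\le n$. Letting $p$ run over the primes not dividing $d$ gives the ``in particular'': $\tfrac p{p-1}\le\tfrac32$ for every prime $p\ge3$, so $\tfrac{3i}{2}+3\le n$ already forces $\tfrac{pi}{p-1}+3\le n$; and $p=2$ occurs only when $d$ is odd, in which case $H_*(\totsp_n^d,\conf_n;\F_2)=0$ by Corollary~\ref{cor:coverings}(a), so there is no $2$-torsion at all.

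The main obstacle I expect is the bookkeeping in the five-lemma arguments: one must keep straight which of ``epi'', ``iso'', ``mono'' is needed at each of the four flanking entries and propagate both the \emph{level} shift (the type-$\mathrm B$ Artin terms sit at $n-1$, the covering terms at $n$) and the \emph{homological} shift (the entry $H_i(\confmd{n}^d,\conf_n)$ corresponds to $H_{i-1}(\Br_n;-)$, with a further shift in $\partial$), so that the coarse integral ranges of Proposition~\ref{prop:stab_rank} produce exactly $i\le\frac n2-1$, $i<\frac n2-1$ and the finer field ranges produce $i+3\le n$, $\tfrac{pi}{p-1}+3\le n$. A secondary point is to check that the summand $H_*(\confmd{n}^d,\conf_n)$ genuinely inherits the stability range of $H_*(\Gb{n};\ring[t]/(1-(-t)^d))$, i.e.\ that the section splitting is compatible with $\gstab$ — which is built into the commuting squares preceding Lemma~\ref{lem:commutative_stab}.
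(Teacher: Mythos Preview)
Your proposal is correct and follows essentially the same route as the paper: feed the long exact sequence \eqref{eq:bia1}/\eqref{eq:bia2} into the stabilization ladder of Lemma~\ref{lem:commutative_stab}, apply the five lemma with the ranges from Proposition~\ref{prop:stab_rank} (the binding case being $p=2$), and pass to integer coefficients via the Universal Coefficient Theorem; for the second part both you and the paper reduce, via the field-coefficient stability ranges, from $n$ even to the odd neighbour where Theorem~\ref{th:only_d_torsion} applies. Your treatment of the ``in particular'' clause is slightly more explicit than the paper's --- you separate out the case $p=2$, $d$ odd and invoke Corollary~\ref{cor:coverings}(a) to kill $2$-torsion outright, rather than relying on the $p=2$ stability range --- which is a clean way to avoid the fact that $\tfrac{3i}{2}+3\le n$ does not by itself imply $2i+3\le n$.
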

\begin{proof}
The maps in the diagram  \eqref{diag:stabilization} with $\Z_p$ coefficients fits in the map of long exact sequences
\begin{equation}\label{diag:stabilization2}
\begin{tabular}{l}
\xymatrix @R=1.5pc @C=0.8pc {
	\!\!\cdots\ar[r]^(0.2)\iota\! &	\!H_i(\confmd{n}^d\!\!, \conf_n;\Z_p)\! \ar[d]^{\gstab_*} \ar[r] &\!H_{i-1} (\Br_n; H_1(\surf_n^d;\Z_p))\! \ar[d]^{\stab_*} \ar[r] &\!H_{i-2}(\confm{n-1};\Z_p)\! \otimes\!H_1(S^1)\! \ar[d]^{\stab_* \otimes \Id} \ar[r]^(0.8)\iota &\!\cdots\!\!\\
	\!\!\cdots\!\ar[r]^(0.15)\iota\!& \!H_i(\confmd{n+1}^d\!\!, \conf_{n+1};\Z_p)\! \ar[r]  & \!H_{i-1} (\Br_{n+1}; \!H_1(\surf_n^d;\Z_p))\!\ar[r] &	\!H_{i-2}(\confm{n};\Z_p) \! \otimes\!H_1(S^1)\!\ar[r]^(0.8)\iota  & \!\cdots\!\!
}
\end{tabular}
\end{equation}
For $p=2$, from Proposition \ref{prop:stab_rank} and Lemma \ref{lem:commutative_stab} we have that the vertical map $\gstab_*$ on the left of diagram \eqref{diag:stabilization2} is an epimorphism for for $i \leq \frac{n}{2}$ and isomorphisms for $i < \frac{n}{2}$. The vertical map $\stab_* \otimes \Id$ on the right of diagram \eqref{diag:stabilization2} is an isomorphism for $i \leq \frac{n}{2}$.

This implies that $$
\stab_*:H_i(\Br_n; H_1(\surf_n^d; \Z_2)) \to H_i(\Br_{n+1}; H_1(\surf_{n+1}^d; \Z_2))
$$
is epimorphism for $i \leq \frac{n}{2} -1$
and an isomorphism for $i < \frac{n}{2}-1$. 

For $p>2$, from Proposition \ref{prop:stab_rank} and Lemma \ref{lem:commutative_stab} we have that the vertical map $\gstab_*$ on the left of diagram \eqref{diag:stabilization2} is an epimorphism for $\frac{p(i-1)}{p-1}+2 \leq n$ and isomorphisms for $\frac{p(i-1)}{p-1}+2 < n$. The vertical map $\stab_* \otimes \Id$ on the right of diagram \eqref{diag:stabilization2} is an isomorphism  for $\frac{p(i-1)}{p-1}+2 \leq n$. We notice that actually these conditions for $n$ are weaker than the condition that holds when $p=2$.

This implies that for $p>2$ $$
\stab_*:H_i(\Br_n; H_1(\surf_n^d; \Z_p)) \to H_i(\Br_{n+1}; H_1(\surf_{n+1}^d; \Z_p))
$$
is epimorphism for $\frac{pi}{p-1}+2 \leq n$
and an isomorphism for $\frac{pi}{p-1}+2 < n$. 

The same argument for $p=0$ shows that
$$
\stab_*:H_i(\Br_n; H_1(\surf_n^d; \Q)) \to H_i(\Br_{n+1}; H_1(\surf_{n+1}^d; \Q))
$$
is an epimorphism for $i+2 \leq n$
and an isomorphism for $i+2< n$. 

From the Universal Coefficients Theorem for homology we get that the homomorphism 
$$
H_i(\Br_n; H_1(\surf_n^d)) \to H_i(\Br_{n+1}; H_1(\surf^d_{n+1}))
$$
is an epimorphism for $i \leq \frac{n}{2}-1 $
and an isomorphism for $i < \frac{n}{2}-1$.

\vskip 3mm

In order to prove the second part of the theorem recall (Theorem \ref{th:no4tor}) that for $n$ odd 
the integer  homology $H_i(\Br_n; H_1(\surf_n^d))$ has only torsion that divides $d$. 
%


The stabilization implies that if $p \nmid d$, for $n$ even $H_i(\Br_n; H_1(\surf_n^d))$ has no $p$ torsion 
for $\frac{pi}{p-1}+3  \leq n$ and no free part for $i+3 \leq n$ and 
for $n$ odd, $H_i(\Br_n; H_1(\surf_n^d))$ has no $p$ torsion 
for $\frac{pi}{p-1}+2  \leq n$ and no free part for $i+2 \leq n$

In particular, for $n$ even,  $\frac{3i}{2}+3  \leq n$ we have that $H_i(\Br_n; H_1(\surf_n^d))$ has only torsion that divides $d$.
\end{proof}
The following statement is the analog of the result in \cite[Prop.~9]{chen} for cohomology.
\begin{thm}\label{thm:unstable}
For $n$ even the groups $H_i(\Br_n;H_1(\surf_n^d;\Z))$ are torsion, except when $d$ is even and $i=n-1, n-2$ where $H_i(\Br_n;H_1(\surf_n^d;\Q)) = \Q.$
\end{thm}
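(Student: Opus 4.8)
The plan is to read off the rational rank of $H_i(\Br_n;H_1(\surf_n^d))$ from the long exact sequence \eqref{eq:bia2}, and then to deduce the integral statement by finiteness: since $\conf_n$ is a finite-type $K(\Br_n,1)$ and $H_1(\surf_n^d;\Z)$ is a finitely generated $\Z\Br_n$-module, each $H_i(\Br_n;H_1(\surf_n^d;\Z))$ is a finitely generated abelian group, hence torsion exactly when $H_i(\Br_n;H_1(\surf_n^d;\Q))=0$, and the exceptional groups are detected by the rational computation. So the whole problem reduces to computing $H_\ast(\Br_n;H_1(\surf_n^d;\F))$ for $\F$ a field of characteristic $0$ and $n$ even.

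Here \Cref{prop:coker_n_pari} does almost all the work: it says that in every positive degree the map $\iota$ of \eqref{eq:bia2} is injective, so the long exact sequence breaks into short exact sequences and gives $H_i(\Br_n;H_1(\surf_n^d;\F))\simeq\coker\iota$ in the relevant range. To evaluate $\coker\iota$ I would use the factorization $\iota=J\circ\tau\circ\mu_*$ of \Cref{diag:trerighe}: over $\F$ the map $\mu_*$ is injective with image $H_\ast(\confm{n},\conf_n)$, the map $J$ is an isomorphism in degrees $>1$ (\Cref{rem:Q_tau_J}), and $\coker\tau$ is trivial when $d$ is odd and has Poincar\'e polynomial $(1+q)q^{n-1}$ when $d$ is even (\Cref{rem:Q_tau_J}, \Cref{prop:hrazionale_minus_t}). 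Combining these, $\coker\iota$ vanishes identically for $d$ odd, so in that case all the groups $H_i(\Br_n;H_1(\surf_n^d;\Z))$ are torsion; for $d$ even it is one–dimensional in exactly two consecutive degrees, and, tracking the degree shift between \eqref{eq:bia2} and $H_i(\Br_n;H_1(\surf_n^d))=H_{i+1}(\totsp_n^d,\conf_n)$ together with the shift coming from the $H_1(S^1)$–factor in $\mu_*$, these become the degrees $i=n-1$ and $i=n-2$. This settles everything once $n\ge 4$, because then the two exceptional degrees lie well inside the range in which $\mu_*$ and $J$ are isomorphisms and $\iota$ is injective.

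What remains is to clean up the bottom of the exact sequence, where $\mu_*$ and $J$ need not be isomorphisms. For $i=0$ the group $H_0(\Br_n;H_1(\surf_n^d;\F))$ is the module of coinvariants, and one checks directly from \eqref{h1action} that it vanishes: as $a$ runs over the basis $\{[a^k_i]\}$ of $H_1(\surf_n^d;\F)$ the vectors $\sum_m (a,\gamma^k_m)[a^k_m]$ already span everything, so the operators $1-(\tkd)_\ast$ are collectively surjective. Feeding this back through the tail of the long exact sequence, together with the fact that the degree–$1$ part of $\tau$ is an isomorphism for $n\ge 4$ (since $\coker\tau$ is concentrated in degrees $n-1,n$), forces $H_1(\Br_n;H_1(\surf_n^d;\F))=0$ as well; the small case $n=2$, where $\Br_n=\Z$ and the exceptional degrees are $1,0$, is verified by hand from the explicit $\tfrac1d$–twist. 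The outcome is consistent with the rational (co)homology of $\totsp_n^d$ computed in \cite{chen}. The only genuinely delicate point of the argument is the index bookkeeping across \eqref{eq:bia1}--\eqref{eq:bia2}, $\mu_*$, $\tau$ and $J$, and the handling of these low–degree edge terms; once that is arranged, the theorem is an immediate consequence of \Cref{prop:coker_n_pari}.
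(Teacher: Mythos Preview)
Your argument for $i \geq 2$ is exactly the paper's: both invoke \Cref{prop:coker_n_pari} via the factorization $\iota = J\circ\tau\circ\mu_*$ and \Cref{rem:Q_tau_J}. The only real difference is how the low degrees $i=0,1$ are handled. The paper uses stabilization (\Cref{thm:stabilization}): for $n \geq 6$ even the map to odd $n+1$ is already an isomorphism in these degrees, and the target is torsion by \Cref{th:only_d_torsion}; the leftover case $n=4$ is settled by a direct computation. Your coinvariant computation for $i=0$ is correct, but the bootstrap to $i=1$ is more delicate than your sketch suggests: asserting that ``$\tau$ is an isomorphism in degree $1$'' is not enough, since $J_1:H_1(\confmd{n}^d)\to H_1(\confmd{n}^d,\conf_n)$ has a one-dimensional kernel $s_*H_1(\conf_n;\F)$, so $\iota_0$ is not obviously injective. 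What actually rescues the argument is that your vanishing $H_0(\Br_n;H_1(\surf_n^d;\F))=0$ forces $\iota_0$ to be surjective, and then a dimension count (both source and target of $\iota_0$ are one-dimensional for $n\geq 4$) upgrades this to an isomorphism, whence $\ker\iota_0=0$ and $H_1=\coker\iota_1=0$. The paper's stabilization route is the cleaner one precisely because it sidesteps this low-degree bookkeeping.
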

\begin{proof}
The result follows from Proposition \ref{prop:coker_n_pari} and, for $i<2$, from the stabilization Theorem \ref{thm:stabilization}. For $n=4$ and $i<2$ the result follows from a direct computation (see also 
\cite[Thm.~6.4]{calmar}).
\end{proof}

Since from Theorem \ref{thm:Bddn} we have the isomorphism $$H_i(\B(d,d,n)) \simeq H_i(\Br_n) \oplus H_{i-1}(\Br_n;H_1(\surf_n^d;\Z))$$
the result of Theorem \ref{thm:stabilization} above and the stability for the homology of the classical braid groups (see \cite{arnold70}) imply a stabilization of the homology of the complex braid groups of type $\B(d,d,n)$.
\begin{thm}\label{thm:complexbraidstability}
The homomorphism $$
H_i(\B(d,d,n)) \to H_{i}(\B(d,d,n+1))
$$
induced by the natural inclusion 
$\B(d,d,n) \into \B(d,d,n+1)$ 
is an epimorphism for $i \leq \frac{n}{2} $
and an isomorphism for $i < \frac{n}{2}$.
\end{thm}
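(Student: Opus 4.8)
The plan is to transport the stability statement through the direct-sum decomposition
$$H_i(\B(d,d,n)) \simeq H_i(\Br_n) \oplus H_{i-1}(\Br_n; H_1(\surf_n^d;\Z))$$
supplied by Theorem \ref{thm:Bddn} together with Remark \ref{rem:globalsection}. First I would check that the inclusion $\B(d,d,n) \into \B(d,d,n+1)$ induces, on homology, a map respecting this splitting. The decomposition arises from the global section $s:\conf_n \to \totsp_n^d$, and by the discussion preceding Lemma \ref{lem:commutative_stab} the geometric stabilization $\gstab$ is homotopy compatible both with the section $s$ and with the ordinary stabilization $\stab:\conf_n \to \conf_{n+1}$. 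Hence, under the splitting, the stabilization map on $H_i(\B(d,d,n))$ is the direct sum of $\stab_*:H_i(\Br_n) \to H_i(\Br_{n+1})$ and the twisted stabilization $\stab_*:H_{i-1}(\Br_n;H_1(\surf_n^d)) \to H_{i-1}(\Br_{n+1};H_1(\surf_{n+1}^d))$.

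Next I would invoke the two relevant stability ranges on the two summands. On the first summand, Arnold's computation of $H_*(\Br_n;\Z)$ (\cite{arnold70}) shows that $H_i(\Br_n) \to H_i(\Br_{n+1})$ is an epimorphism for $i \leq \frac{n}{2}$ and an isomorphism for $i < \frac{n}{2}$ (the braid groups in fact satisfy the sharper range $2i \leq n$, resp.\ $2i < n$, but the weaker form is all we need here). On the second summand, Theorem \ref{thm:stabilization} gives that $H_j(\Br_n;H_1(\surf_n^d)) \to H_j(\Br_{n+1};H_1(\surf_{n+1}^d))$ is an epimorphism for $j \leq \frac{n}{2}-1$ and an isomorphism for $j < \frac{n}{2}-1$; setting $j=i-1$ this becomes an epimorphism for $i \leq \frac{n}{2}$ and an isomorphism for $i < \frac{n}{2}$, precisely matching the range from the trivial-coefficient part.

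Finally, a direct sum of epimorphisms is an epimorphism and a direct sum of isomorphisms is an isomorphism, so $H_i(\B(d,d,n)) \to H_i(\B(d,d,n+1))$ is an epimorphism for $i \leq \frac{n}{2}$ and an isomorphism for $i < \frac{n}{2}$, as claimed. The only step that genuinely requires attention is the first one, namely that the inclusion of complex braid groups is compatible with the homological splitting; but this is immediate from the compatibility of the sections with stabilization recorded in Section \ref{sec:stab}, and the rest is bookkeeping with the two stability ranges, which were designed to line up.
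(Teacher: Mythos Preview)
Your proposal is correct and follows exactly the route the paper takes: the paper simply says that the isomorphism $H_i(\B(d,d,n)) \simeq H_i(\Br_n) \oplus H_{i-1}(\Br_n;H_1(\surf_n^d;\Z))$ from Theorem~\ref{thm:Bddn}, together with Theorem~\ref{thm:stabilization} and Arnold's stability for $H_*(\Br_n)$, gives the result. You are in fact more careful than the paper in that you explicitly verify the compatibility of the splitting with the stabilization map via the sections and $\gstab$; the paper leaves this implicit.
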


In \Cref{tab:conti,tab:conti2,tab:conti3,tab:conti4,tab:conti5} we  present some computations of the groups $H_i(\Br_n;H_1(\surf_n^d))$ for $d = 2$ to $6$, with integer coefficients. The computations are obtained using an Axiom implementation of the complex introduced in \cite{salvetti}. Notice that the homology groups in the Table \ref{tab:conti} coincide with the homology groups  $H_{i+1}(\Gdn, \Br_n)$ (see Theorem \ref{thm:Bddn} and Remark \ref{rem:dn}).
\begin{table*}[htb] 
\setlength{\tabcolsep}{6pt}
\renewcommand{\arraystretch}{1.2}
\setlength\extrarowheight{3pt}
\begin{center}
\begin{tabular}{|c|c|c|c|c|c|c|c|c|c|c|c|}
\hline
\backslashbox{$n$}{$i$}
& 1 & 2 & 3 & 4 & 5 & 6 & 7 & 8 & 9 & 10 & 11\\
\hline
$3$ & $\Z_2$ &&&&&&&&&&\\
\hline
$4$ & $\Z_2^2$ & $\Z$&$\Z$&&&&&&&&\\
\hline
$5$
 & 
\cellcolor{lightgray!30}
$\Z_2$ & 
$\Z_2$
&$\Z_2$&&&&&&&&\\ 
\hline
$6$ & $\Z_2$ & $\Z_2^2$&\!$\Z_2^2  \Z_3$\!&$\Z$&$\Z$&&&&&&\\
\hline
$7$ & $\Z_2$ & \chl $\Z_2$&$\Z_2^2 $&$\Z_2^2$&$\Z_2$&&&&&&\\
\hline
$8$ & $\Z_2$ & $\Z_2$&\!\!$\Z_2^3 $\!\!&\!\!$\Z_2^3\Z_3$\!\!&$\Z_2^3\Z_3$&$\Z$&$\Z$&&&&\\
\hline
$9$ & $\Z_2$ & $\Z_2$&\chl $\Z_2^2 $&$\Z_2^3$&$\Z_2^3$&$\Z_2^2$&$\Z_2$&&&&\\
\hline
${10}$ & $\Z_2$ & $\Z_2$&$\Z_2^2 $&$\Z_2^4$&$\Z_2^4$&$\Z_2^4\Z_3$&\!$\Z_2^3 \Z_3\Z_5$\!&$\Z$&$\Z$&&\\
\hline
${11}$ & $\Z_2$ & $\Z_2$&$\Z_2^2 $&\chl $\Z_2^3$&$\Z_2^4$&$\Z_2^4$&$\Z_2^4 $&$\Z_2^3$&$\Z_2$&&\\
\hline
${12}$ & $\Z_2$ & $\Z_2$&$\Z_2^2 $&$\Z_2^3$&
$\Z_2^5$ & $\Z_2^5$
&$\Z_2^6\Z_3$&\!$\Z_2^6\Z_3\Z_5$\!&$\Z_2^3\Z_3\Z_5$&$\Z$&$\Z$\\
\hline
${13}$ & $\Z_2$ & $\Z_2$&$\Z_2^2 $&$\Z_2^3$&\chl $\Z_2^4$&$\Z_2^5$&$\Z_2^6$&$\Z_2^6$&$\Z_2^5$&$\Z_2^3$&$\Z_2$\\
\hline
\end{tabular}
\end{center}
\caption{Computations of $H_i(\Br_n;H_1(\surf_n^2))$. For each column the first stable group is highlighted.}\label{tab:conti}
\end{table*}

\begin{table*}[htb] 
	\setlength{\tabcolsep}{6pt}
	\renewcommand{\arraystretch}{1.2}
	\setlength\extrarowheight{3pt}
	\begin{center}
		\begin{tabular}{|c|c|c|c|c|c|c|c|c|c|c|c|}
			\hline
			\backslashbox{$n$}{$i$}
			& 1 & 2 & 3 & 4 & 5 & 6 & 7 & 8 \\
			\hline
			$3$ & $\Z_3$ &&&&&&&\\
			\hline
			$4$ & 	\chl	 $\Z_3$ & $\Z_3$&&&&&&\\
			\hline
			$5$
			& 

			$\Z_3$ & 
			$\Z_3$
			&$\Z_3$&&&&&\\ 
			\hline
			$6$ & $\Z_3$ & \chl $\Z_3$&$\Z_3^2$  &$\Z_3^2$&&&&\\
			\hline
			$7$ & $\Z_3$ &  $\Z_3$&\chl $\Z_3 $&$\Z_3$&$\Z_3$&&&\\
			\hline
			$8$ & $\Z_3$ & $\Z_3$&$\Z_3 $&$\Z_3$&$\Z_3$&$\Z_3$&&\\
			\hline
			$9$ & $\Z_3$ & $\Z_3$& $\Z_3 $&\chl $\Z_3$ &$\Z_3^2$ &$\Z_3^2$&$\Z_3$&\\
			\hline
			$10$ & $\Z_3$ & $\Z_3$& $\Z_3 $& $\Z_3$ &$\Z_3^2$ &$\Z_3^3$&$\Z_3^2$&$\Z_3$\\
			\hline
		\end{tabular}
	\end{center}
	\caption{Computations of $H_i(\Br_n;H_1(\surf_n^3))$. For each column the first stable group is highlighted.}\label{tab:conti2}
\end{table*}
\begin{table*}[htb] 
	\setlength{\tabcolsep}{6pt}
	\renewcommand{\arraystretch}{1.2}
	\setlength\extrarowheight{3pt}
	\begin{center}
		\begin{tabular}{|c|c|c|c|c|c|c|c|c|c|c|c|}
			\hline
			\backslashbox{$n$}{$i$}
			& 1 & 2 & 3 & 4 & 5 & 6 & 7 & 8 & 9\\
			\hline
			$3$ & $\Z_4$ &&&&&&&&\\
			\hline
			$4$ & 	\!\!	 $\Z_2 \Z_4$\!\! & \!\!$\Z_2^2 \Z$ \!\!&$\Z$&&&&&&\\
			\hline
			$5$
			& 
			\chl$\Z_4$ & 
			$\Z_4$
			&$\Z_4$&&&&&&\\ 
			\hline
			$6$ & $\Z_4$ &\!\! $\Z_2 \Z_4 $\!\!&$\Z_2\Z_3 \Z_4$  &$\Z_2\Z $&$\Z$&&&&\\
			\hline
			$7$ & $\Z_4$ &  \chl $\Z_4$& $\Z_2\Z_4 $&$\Z_2\Z_4$&$\Z_4$&&&&\\
			\hline
			$8$ & $\Z_4$ & $\Z_4$&$\Z_2^2\Z_4 $&$\Z_2^3\Z_3\Z_4$&$\Z_2^3\Z_3\Z_4$&$\Z_4\Z_8\Z$&$\Z$&&\\
			\hline
			$9$ & $\Z_4$ & $\Z_4$& \chl$\Z_2\Z_4 $& $\Z_2^2\Z_4$ &$\Z_2^2\Z_4$ &$\Z_2\Z_4$&$\Z_4$&&\\
			\hline
			${10}$ & $\Z_4$ & $\Z_4$&$\Z_2 \Z_4$&$\Z_2^3 \Z_4$&$\Z_2^3 \Z_4$&$\Z_2^3\Z_4\Z_6$&$\Z_2^3 \Z_{60}$&$\Z_2\Z$&$\Z$\\
						\hline
		\end{tabular}
	\end{center}
	\caption{Computations of $H_i(\Br_n;H_1(\surf_n^4))$. For each column the first stable group is highlighted. Notice that $3$-torsion appears for $n$ even.}\label{tab:conti3}
\end{table*}

\begin{table*}[htb] 
	\setlength{\tabcolsep}{6pt}
	\renewcommand{\arraystretch}{1.2}
	\setlength\extrarowheight{3pt}
	\begin{center}
		\begin{tabular}{|c|c|c|c|c|c|c|c|c|c|c|c|}
			\hline
			\backslashbox{$n$}{$i$}
			& 1 & 2 & 3 & 4 & 5 & 6 & 7 & 8 & 9 & 10 & 11\\
			\hline
			$3$ & $\Z_5$ &&&&&&&&&&\\
			\hline
			$4$ & 		\chl $\Z_5 $ & $\Z_5$&&&&&&&&&\\
			\hline
			$5$
			& 
			$\Z_
			5$ & \chl
			$\Z_5$
			& $\Z_5$&&&&&&&&\\ 
			\hline
			$6$ & $\Z_5$ & $\Z_5$&\chl $\Z_5$  &$\Z_5$&&&&&&&\\
			\hline
			$7$ & $\Z_5$ &  $\Z_5$& $\Z_5 $&$\Z_5$&$\Z_5$&&&&&&\\
			\hline
			$8$ & $\Z_5$ & $\Z_5$&$\Z_5$&\chl $\Z_5$&$\Z_5$&$\Z_5$&&&&&\\
			\hline
			$9$ & $\Z_5$ & $\Z_5$& $\Z_5 $& $\Z_5$ &\chl $\Z_5$ &$\Z_5$&$\Z_5$&&&&\\
			\hline
			$10$ & $\Z_5$ & $\Z_5$& $\Z_5 $& $\Z_5$ & $\Z_5$ &$\Z_5$&$\Z_5^4$&$\Z_5^4$&&&\\
			\hline
		\end{tabular}
	\end{center}
	\caption{Computations of $H_i(\Br_n;H_1(\surf_n^5))$. For each column the first stable group is highlighted.}\label{tab:conti4}
\end{table*}

\begin{table*}[htb] 
	\setlength{\tabcolsep}{6pt}
	\renewcommand{\arraystretch}{1.2}
	\setlength\extrarowheight{3pt}
	\begin{center}
		\begin{tabular}{|c|c|c|c|c|c|c|c|c|c|c|c|}
			\hline
			\backslashbox{$n$}{$i$}
			& 1 & 2 & 3 & 4 & 5 & 6 & 7 & 8 & 9 \\
			\hline
			$3$ & $\Z_6$ &&&&&&&&\\
			\hline
			$4$ & 		\! $\Z_2 \Z_6$\! & $\Z_3 \Z$&$\Z$&&&&&&\\
			\hline
			$5$
			& \chl
			$\Z_
			6$ & 
			$\Z_6$
			&$\Z_6$&&&&&&\\ 
			\hline
			$6$ & $\Z_6$ & \!$\Z_2 \Z_6$\!&\!$\Z_3^2 \Z_6^2$ \! &$\Z_3^4 \Z$&$\Z$&&&&\\
			\hline
			$7$ & $\Z_6$ &  \chl$\Z_6$& \!$\Z_2 \Z_6 $\!&\!$\Z_2\Z_6$\!&$\Z_6$&&&&\\
			\hline
			$8$ & $\Z_6$ & $\Z_6$&\!$\Z_2^2\Z_6 $\!&\!$\Z_2\Z_6^2$\!&\!$\Z_2\Z_6^2$\!&$\Z_3\Z$&$	\Z$&&\\
			\hline
			$9$ & $\Z_6$ & $\Z_6$&  \chl \!$\Z_2 \Z_6$\!  & \!$\Z_2^2\Z_6$  \!&\! $\Z_2\Z_6^2$ \!&$\Z_6^2$&$\Z_6$&&\\
			\hline
			$10$ & $\Z_6$ & $\Z_6$& \! $\Z_2 \Z_6$ \! & ?  & $\Z_2^2 \Z_6^2$ &$\Z_6^4$&\!$\Z_6^3\Z_5$\!&\!$\Z_3\Z$\! & $\Z$\\
			\hline
			
		\end{tabular}
	\end{center}
	\caption{Computations of $H_i(\Br_n;H_1(\surf_n^6))$. For each column the first stable group is highlighted. Notice that $5$-torsion appears for $n$ even.}\label{tab:conti5}
\end{table*}

\subsection{Poincar\'e polynomials}
We use the previous results to compute explicitly the Poincar\'e polynomials for odd $n$.

\begin{thm}\label{thm:poincare}
For odd $n$, for $p$ prime such that $p\mid d$ 
the rank of $H_i(\Br_n;H_1(\surf_n^d;\Z))\otimes \Z_p$ as a $\Z_p$-module is the coefficient of $q^it^n$ in the expansion of the series
$$
\widetilde{P}_p(q,t)= \frac{qt^3}{(1-t^2q^2)(1-t^2)} \prod_{j \geq 0} \frac{1+q^{2p^j-1}t^{2p^j}}{1-q^{2p^{j+1}-2}t^{2p^{j+1}}}
$$
that specialize in the case  $p=2$ to the series
 $$
\widetilde{P}_2(q,t)=\frac{qt^3}{(1-t^2q^2)} \prod_{i \geq 0} \frac{1}{1-q^{2^i-1}t^{2^i}}.
$$
%
\end{thm}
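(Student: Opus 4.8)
\emph{Proof proposal.} The plan is to reduce the statement to a count of homology generators, and then to read that count off as a product of simple generating functions. Fix an odd integer $n$ and a prime $p$ dividing $d$, and write $a_i=a_i^{(n)}$ for the rank of $H_i(\Br_n;H_1(\surf_n^d;\Z))\otimes\Z_p$. By Theorems~\ref{th:only_d_torsion} and \ref{th:no4tor} the group $H_i(\Br_n;H_1(\surf_n^d;\Z))$ is finite and only its $p$-primary part survives after tensoring with $\Z_p$, so $a_i$ is simply the number of cyclic summands, i.e.\ $a_i=\dim_{\F_p}\!\bigl(H_i(\Br_n;H_1(\surf_n^d;\Z))\otimes\F_p\bigr)$. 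First I would pass to $\F_p$ coefficients. Since $H_1(\surf_n^d;\Z)$ is $\Z$-free, the universal coefficient theorem for the change of coefficients $\Z\to\F_p$, together with the fact (Theorem~\ref{th:only_d_torsion}) that $H_\ast(\Br_n;H_1(\surf_n^d;\Z))$ is all torsion, gives $\dim_{\F_p}H_i(\Br_n;H_1(\surf_n^d;\F_p))=a_i+a_{i-1}$. On the other hand, running the long exact sequence \eqref{eq:bia2} with $\F_p$ coefficients and using $\overline u_i=\overline v_i$ (Remark~\ref{rem:basi_mod_2}, with $\overline u_i,\overline v_i$ the ranks of cokernel and kernel of $\iota_i$ as in the proof of Proposition~\ref{prop:split}), one gets $\dim_{\F_p}H_i(\Br_n;H_1(\surf_n^d;\F_p))=\overline u_i+\overline u_{i-1}$. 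For fixed $n$ both sides of these identities are polynomials in $q$ when summed against $q^i$, so cancelling the factor $1+q$ in $\Z[q]$ yields $a_i=\overline u_i$ for every $i$. (Equivalently, one can argue directly from the split short exact sequence \eqref{eq:short_split} of Proposition~\ref{prop:split} and the diagonal form \eqref{rat_gen1}--\eqref{rat_gen4} of $\tau$, keeping track of the correction terms $w_i$ as in that proof.)

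It then remains to count, weighted by $q^{(\text{homological dimension})-1}\,t^{(\text{degree }n)}$, the $\F_p$-basis of $\coker\iota_i$ exhibited in Remark~\ref{rem:basi_mod_2}: for $p$ odd these are the classes $\gamma(z_c,h^r y_{j_1}\cdots y_{j_l}x_{i_1}\cdots x_{i_k})=\DB\!\bigl(z_{c+1}h^{r-1}y_{j_1}\cdots y_{j_l}x_{i_1}\cdots x_{i_k}\bigr)/(1+t)$ with $c$ even and positive and $r>0$, and for $p=2$ the classes $\gamma(z_c,x_0 x_{i_1}\cdots x_{i_k})$ under the analogous restrictions; the shift by $1$ in the exponent of $q$ reflects that $H_i(\Br_n;H_1(\ddiskP^d))\cong H_{i+1}(\confmd{n}^d,\conf_n)$, so a class of homological dimension $i+1$ records a $q^i$. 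Such a monomial is specified by independent choices — one factor $z_c$ ($c$ even, $c\ge2$), an arbitrary power $h^{r-1}$, a multiset of $y_j$'s ($j\ge1$, hence a polynomial algebra), and a set of distinct $x_j$'s ($j\ge0$, hence an exterior algebra) — with bidegree additive over the factors. Reading off from \cite{calmar} (cf.\ Proposition~\ref{prop:homol_p2}) the bidegrees $(\text{hom.\ deg.},\,\text{degree }n)$ of the elementary pieces, namely $z_c\mapsto(c,c+1)$, $h=y_0\mapsto(0,2)$, $y_j\mapsto(2p^j-2,\,2p^j)$ for $j\ge1$, and $x_j\mapsto(2p^j-1,\,2p^j)$ for $j\ge0$, the series $\sum_{i,n}\overline u_i^{(n)}q^i t^n$ factors as
\[
\Bigl(\sum_{c\ge2,\ c\ \mathrm{even}}q^{\,c-1}t^{\,c+1}\Bigr)\cdot\frac{1}{1-t^2}\cdot\prod_{j\ge0}\bigl(1+q^{2p^j-1}t^{2p^j}\bigr)\cdot\prod_{j\ge1}\frac{1}{1-q^{2p^j-2}t^{2p^j}},
\]
the four factors recording respectively the choice of $z_c$, of $h^{r-1}$, of the exterior monomial in the $x_j$, and of the polynomial monomial in the $y_j$. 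Summing the first geometric series gives $qt^3/(1-q^2t^2)$, and the product is exactly $\widetilde P_p(q,t)$.

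For $p=2$ the description of \cite{calmar} carries no $h$ and no $y_j$: the class $\gamma(z_c,x_0 x_{i_1}\cdots x_{i_k})$ consists of a single $z_c$ and a multiset of $x_j$'s ($j\ge0$), with $x_0$ of bidegree $(0,1)$ and $x_j$ of bidegree $(2^j-1,2^j)$ for $j\ge1$; the generating function is then $qt^3/(1-q^2t^2)\cdot\prod_{j\ge0}1/(1-q^{2^j-1}t^{2^j})=\widetilde P_2(q,t)$, and one verifies that the coefficients of $q^it^n$ with $n$ odd coincide with those of the general $\widetilde P_p$ at $p=2$ by telescoping $\prod_{j\ge0}\bigl(1+q^{2p^j-1}t^{2p^j}\bigr)\big/\bigl(1-q^{2p^{j+1}-2}t^{2p^{j+1}}\bigr)$ via $1+X=(1-X^2)/(1-X)$. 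The part of the argument I expect to be the main obstacle is not conceptual but the bookkeeping: confirming that $a_i=\overline u_i$ holds in each bidegree (where $\overline u_i=\overline v_i$ from Remark~\ref{rem:basi_mod_2} and the splitting of Proposition~\ref{prop:split} are essential), and extracting from \cite{calmar} the precise homological degree and $n$-weight of each of $z_c,h,y_j,x_j$ so that the constraints ``$c$ even and positive'' and ``$r>0$'' translate into exactly the factors $qt^3/(1-q^2t^2)$ and $1/(1-t^2)$. Once those bidegrees are pinned down, the remaining identity of formal power series is routine, and can be cross-checked against the explicit small cases in \Cref{tab:conti,tab:conti2,tab:conti4}.
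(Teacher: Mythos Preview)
Your proof is correct and follows essentially the same route as the paper: both use the universal coefficients theorem to reduce to $\F_p$ coefficients, the split short exact sequence from \eqref{eq:bia2} together with $\overline u_i=\overline v_i$ (Remark~\ref{rem:basi_mod_2}) to identify $\dim_{\F_p}H_i(\Br_n;H_1(\surf_n^d;\F_p))$ with $\overline u_i+\overline u_{i-1}$, and then cancel the factor $(1+q)$ to conclude $a_i=\overline u_i$. The only difference is expository: the paper simply asserts the closed form for $P_p(\coker\iota)$ from Remark~\ref{rem:basi_mod_2}, whereas you spell out the factorization of the generating series by assigning bidegrees to $z_c$, $h$, $y_j$, $x_j$ and include the telescoping check for $p=2$.
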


\begin{proof}
Let $P_p(\Br_n,H_1(\surf_n^d))(q)$ be the Poincar\'e polynomial for the homology group $H_*(\Br_n;H_1(\surf_n^d;\Z))\otimes \Z_p$ as a $\Z_p$-module. 
Since we already know that for $n$ odd 
the homology group $H_i(\Br_n;H_1(\surf_n^d;\Z))$ has only torsion of order that divides $d$, we can compute the polynomial $P_p(\Br_n,H_1(\surf_n^d))(q)$ from the Universal Coefficients Theorem as follows. We compute the Poincar\'e polynomial 
$
P_p(\Br_n, H_1(\surf_n^d;\Z_p))(q)
$
for the group 
$H_i(\Br_n;H_1(\surf_n^d;\Z_p)) \otimes \Z_p
$
and we divide by $1+q$.

In order to compute $
P_p(\Br_n, H_i(\surf_n^d;\Z_p))(q)
$ 
we consider the short exact sequence, which splits
$$
0 \to \coker \iota_i \to H_{i}(\Br_n; H_1(\surf_n^d:\Z_p)) \to \ker \iota_{i-1} \to 0
$$
where we recall that $\iota_i$ is the map
$$\iota_i:H_{i}(\Br_n; H_1(S^1 \times P;\Z_p)) \to H_{i}(\Br_n; H_1(\ddiskP^d;\Z_p)).$$
It follows from the Remark \ref{rem:basi_mod_2} and from the description in Section \ref{sec:no4tor} that for a fixed odd $n$ the ranks of $\ker \iota$ and $\coker \iota$
are respectively the coefficients of $q^it^n$ in the following series:
$$
P_p(\coker \iota) = P_p(\ker \iota) = \frac{qt^3}{(1-t^2q^2)(1-t^2)} \prod_{j \geq 0} \frac{1+q^{2p^j-1}t^{2p^j}}{1-q^{2p^{j+1}-2}t^{2p^{j+1}}}
$$
that specalize for $p=2$ to the series
$$
P_2(\coker \iota) = P_2(\ker \iota)  =
\frac{qt^3}{1-t^2q^2} \prod_{j \geq 0} \frac{1}{1-q^{2^j-1}t^{2^j}}
$$

Clearly 
the polynomial for 
$H_{i}(\Br_n; H_1(\surf_n^d:\Z_p)) \otimes \Z_p$ is given by the coefficient of $t^n$ in the sum
$$
 P_p(\coker \iota) + qP_p(\ker \iota) 
$$
and hence dividing by $(1+q)$ we get our result. 
\end{proof}

\begin{rem}
The same argument of Theorem \ref{thm:poincare} could be used when $d$ is odd  to compute a (more complicated) formula of the Poincar\'e polynomial of the groups $H_*(\Br_n;H_1(\surf_n^d;\Z))\otimes \Z_p$ for any $n$.
\end{rem}

\begin{rem}
Notice that the polynomials given in Theorem \ref{thm:poincare} do not depend on the integer $d$. Hence we have that when $n$ is odd and $p \mid d$ there is a (non-natural) isomorphism$$
H_i(\Br_n;H_1(\surf_n^d;\Z))\otimes \Z_p = H_i(\Br_n;H_1(\surf_n^p;\Z))\otimes \Z_p.
$$
\end{rem}

\begin{rem}
We know (Theorem \ref{th:only_d_torsion} and \ref{thm:4tor}) that when $n$ is odd and $d$ is squarefree
the group
$H_i(\Br_n; H_1(\surf_n^d;\Z))$ is finite (with the exception of $H_0$) and has no $p$-torsion for $p \nmid d$ and no $p^2$-torsion for any prime $p$. Then in such cases Theorem \ref{thm:poincare} completely determines the homology groups $H_i(\Br_n; H_1(\surf_n^d;\Z))$ and hence (using Theorem \ref{thm:Bddn}) the groups $H_{i+1}(\B(d,d,n), \Br_n;\Z)$ for $n$ odd and $d$ squarefree.
\end{rem}
The same argument of the previous proof can be applied in stable rank. From the Remark \ref{rem:basi_mod_2} the Stable Poincar\'e polynomial of both $\coker \iota$ and $\ker \iota$ with $\Z_p$ coefficients is the following:
$$
\frac{q}{1-q^2} \prod_{j \geq 0} \frac{1+q^{2p^j-1}}{1-q^{2p^{j+1}-2}}
$$
and in particular for $p=2$ we obtain
$$
\frac{q}{1-q^2} \prod_{j \geq 1} \frac{1}{1-q^{2^j-1}}.
$$

Since for $n$ odd there is no free part in $H_i(\Br_n;H_1(\surf_n^d;\Z))$ all these groups have only torsion that divides $d$. In particular for integer coefficients we get the following statement.
\begin{thm}\label{thm:stablepoincare}
Let $p$ be a
prime and let $d$ be an integer such that $p\mid d$ the Poincar\'e polynomial of the stable homology $H_i(\Br_n;H_1(\surf_n^d;\Z))\otimes \Z_p$ as a $\Z_p$-module is the following:
$$
P_p(\Br;H_1(\Sigma^d))(q) = \frac{q}{1-q^2} \prod_{j \geq 0} \frac{1+q^{2p^j-1}}{1-q^{2p^{j+1}-2}}.
$$
In particular when $d$ is an even integer the Poincar\'e polynomial of the stable homology $H_i(\Br_n;H_1(\surf_n^d;\Z))\otimes \Z_2$ as a $\Z_2$-module is the following:
$$
P_2(\Br;H_1(\Sigma^d))(q) = \frac{q}{1-q^2} \prod_{j \geq 1} \frac{1}{1-q^{2^j-1}}.
$$
\end{thm}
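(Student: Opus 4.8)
The plan is to run, in the stable range, the computation that proves Theorem~\ref{thm:poincare}. By the stabilization Theorem~\ref{thm:stabilization} the maps $H_i(\Br_n;H_1(\surf_n^d))\to H_i(\Br_{n+1};H_1(\surf_{n+1}^d))$ are isomorphisms once $n$ is large relative to $i$, and a stabilization from an odd index to the next odd one factors through the intervening even one; hence the stable group $H_i(\Br;H_1(\Sigma^d))$ is computed by taking $n$ odd and large. I will therefore fix such an odd $n$, where the explicit $\Z_p$-bases of $\coker\iota$ and $\ker\iota$ of Remark~\ref{rem:basi_mod_2} are available and where, by Theorem~\ref{th:only_d_torsion}, the integral group $H_i(\Br_n;H_1(\surf_n^d;\Z))$ is finite, its torsion dividing $d$ by Theorem~\ref{th:no4tor}.

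First I would note that, since $\Z_p$ is a field, the long exact sequence~\eqref{eq:bia2} breaks into short exact sequences $0\to\coker\iota_i\to H_i(\Br_n;H_1(\surf_n^d;\Z_p))\to\ker\iota_{i-1}\to 0$, so that the $\Z_p$-Poincar\'e polynomial of $H_*(\Br_n;H_1(\surf_n^d;\Z_p))$ equals $P(\coker\iota)(q)+q\,P(\ker\iota)(q)$. For each $i$ the bases of $\coker\iota_i$ and $\ker\iota_i$ given in Remark~\ref{rem:basi_mod_2} are in a degree-preserving bijection, so $P(\coker\iota)=P(\ker\iota)$, and this common stable series is $S_p(q):=\frac{q}{1-q^2}\prod_{j\geq 0}\frac{1+q^{2p^j-1}}{1-q^{2p^{j+1}-2}}$, as exhibited just before the statement. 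Hence the $\Z_p$-Poincar\'e polynomial of $H_*(\Br_n;H_1(\surf_n^d;\Z_p))$ is $(1+q)S_p(q)$. Since $H_i(\Br_n;H_1(\surf_n^d;\Z))$ is finite and $\dim_{\F_p}(M\otimes\Z_p)=\dim_{\F_p}\Tor(M,\Z_p)$ for every finite abelian group $M$, the Universal Coefficients Theorem gives, exactly as in the proof of Theorem~\ref{thm:poincare}, that the $\Z_p$-Poincar\'e polynomial of $H_*(\Br_n;H_1(\surf_n^d;\Z))\otimes\Z_p$ is this quantity divided by $1+q$, namely $S_p(q)$; this is the first formula. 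The second then follows by specializing $p=2$ and using $1-q^{2^{j+2}-2}=(1-q^{2^{j+1}-1})(1+q^{2^{j+1}-1})$, which makes $\prod_{j\geq 0}\frac{1+q^{2^{j+1}-1}}{1-q^{2^{j+2}-2}}$ telescope to $\prod_{k\geq 1}\frac{1}{1-q^{2^k-1}}$.

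The only step requiring genuine care, and the main obstacle, is the identification of the common stable series of $\coker\iota$ and $\ker\iota$ with $S_p(q)$: one must check that, as $n$ runs over the odd integers, the coefficient of $q^it^n$ in the bivariate series $\widetilde P_p(q,t)$ of Theorem~\ref{thm:poincare} stabilizes, with limit the coefficient of $q^i$ in $S_p(q)$. Concretely this says that the factor $\frac{1}{1-t^2}$ in $\widetilde P_p$ records precisely the stabilizing direction, so that $S_p$ is obtained from $\widetilde P_p$ by deleting $\frac{1}{1-t^2}$ and setting $t=1$ in the remaining factors; equivalently, it amounts to re-running the enumeration of the bases of Remark~\ref{rem:basi_mod_2} in fixed homological degree with $n\to\infty$. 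Everything else is a formal consequence of the exact sequence~\eqref{eq:bia2} and the Universal Coefficients Theorem.
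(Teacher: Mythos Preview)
Your proposal is correct and follows essentially the same route as the paper: the paper's argument (contained in the paragraph immediately preceding the statement) is precisely to rerun the proof of Theorem~\ref{thm:poincare} in the stable range, using Remark~\ref{rem:basi_mod_2} to identify the common stable Poincar\'e series of $\ker\iota$ and $\coker\iota$, and then invoking the Universal Coefficients Theorem together with the absence of a free part for odd $n$. Your last paragraph spelling out how $S_p(q)$ arises from $\widetilde P_p(q,t)$ by stripping the $\frac{1}{1-t^2}$ factor and setting $t=1$ is a helpful elaboration of what the paper leaves implicit.
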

\begin{rem}
The stabilization result for the homology of the complex braid groups of type $\B(d,d,n)$ (Theorem \ref{thm:complexbraidstability}) together with Theorem \ref{th:no4tor} implies that the stable homology of $\B(d,d,n)$ with integer coefficients has no $p^k$ torsion only if $p^k \nmid d$.
Hence Theorem \ref{thm:Bddn} implies that when $p^2 \nmid d$ the stable Poincar\'e polynomials given in Theorem \ref{thm:stablepoincare} determines the corresponding $p$-torsion component of the stable homology group $H_{i+1}(\B(d,d,n), \Br_n; \Z)$ and this component 
is the same for all $d = p m$ where $p \nmid m$  and is trivial when $p \nmid d$.
\end{rem}
An explicit computation of the first terms of the stable series $P_2(\Br;H_1(\Sigma^{2d}))(q)$ gives
$$
q+q^2+ 2q^3 + 3q^4 + 4q^5 + 5q^6 + 7q^7 + 9 q^8 + 11q^9 + 14q^{10} + 17 q^{11} + \ldots
$$
while for $P_3(\Br_n;H_1(\surf_n^{3d};\Z))$ we have
$$
q + q^2 + q^3 + q^4 + 2q^5 + 3q^6 + 3 q^7 + 3q^8 + 4q^9 + 5q^{10} + 5q^{11} + 6 q^{12} + \ldots
$$


\bibliographystyle{alpha}
\bibliography{biblio} 
\end{document}